\documentclass[12pt,reqno]{amsart}

\usepackage{amsmath,amssymb,ifthen}
\usepackage{fullpage}
\usepackage{graphicx,psfrag,subcaption}
\usepackage{color}
\usepackage{moreverb}
\usepackage{amsthm}
\usepackage{mathrsfs}
\usepackage{hhline}
\usepackage{bbm}
\usepackage[english]{babel}
\usepackage{tikz}
\usetikzlibrary{calc,math}
\usepackage{pgfplots}
\usepackage[utf8]{inputenc}
\usepackage[T1]{fontenc}
\usepackage{amsfonts}
\usepackage{enumerate}
\usepackage{hyperref}

\DeclareMathOperator*{\argmax}{argmax}

\renewcommand{\d}{\,{\rm d}} 
\DeclareMathOperator{\diam}{diam} 
 
\DeclareMathOperator{\supp}{supp}

\newcommand{\set}[2]{\big\{#1\,:\,#2\big\}}

\newcommand{\dual}[3][]{#1\langle#2\,,\,#3#1\rangle}

\newcommand{\norm}[3][]{#1\|#2#1\|_{#3}}

\newcommand\K{\mathbb{K}}
\newcommand\N{\mathbb{N}}
\newcommand\R{\mathbb{R}}

\newcommand\Z{\mathbb {Z}}

\newcommand\OO{{\mathcal O}}

\numberwithin{equation}{section}
\numberwithin{figure}{section}
\newtheorem{theorem}{Theorem}[section]
\newtheorem{proposition}[theorem]{Proposition}
\newtheorem{lemma}[theorem]{Lemma}

\newtheorem{algorithm}[theorem]{Algorithm}

\newtheorem{remark}[theorem]{Remark}

\newcommand*\patchAmsMathEnvironmentForLineno[1]{%
  \expandafter\let\csname old#1\expandafter\endcsname\csname #1\endcsname
  \expandafter\let\csname oldend#1\expandafter\endcsname\csname end#1\endcsname
  \renewenvironment{#1}%
     {\linenomath\csname old#1\endcsname}%
     {\csname oldend#1\endcsname\endlinenomath}}%
\newcommand*\patchBothAmsMathEnvironmentsForLineno[1]{%
  \patchAmsMathEnvironmentForLineno{#1}%
  \patchAmsMathEnvironmentForLineno{#1*}}%
\AtBeginDocument{%
\patchBothAmsMathEnvironmentsForLineno{equation}%
\patchBothAmsMathEnvironmentsForLineno{align}%
\patchBothAmsMathEnvironmentsForLineno{flalign}%
\patchBothAmsMathEnvironmentsForLineno{alignat}%
\patchBothAmsMathEnvironmentsForLineno{gather}%
\patchBothAmsMathEnvironmentsForLineno{multline}%
}
\usepackage[mathlines]{lineno}

\usepackage{bbm} 
\usepackage{caption}
\usepackage{subcaption}
\usepackage{soul}
\usepackage{pgfplots}
\pgfplotsset{compat=1.18}
\usepgfplotslibrary{groupplots}
\usepackage{float}

\renewcommand{\d}{\,{\rm d}}

\newcommand{\n}{{\bf n}}
\newcommand{\x}{{\bf x}}
\newcommand{\y}{{\bf y}}
\newcommand{\z}{{\bf z}}

\renewcommand{\H}{\widetilde{H}}
\renewcommand{\K}{\mathscr{K}}
\newcommand{\M}{\mathscr{M}}
\newcommand{\NN}{\mathcal{N}}
\newcommand{\PP}{\mathcal{P}}
\newcommand{\V}{\mathscr{V}}
\newcommand{\W}{\mathscr{W}}

\title{Adaptive space-time BEM\\
for the heat equation\\ 
with Neumann boundary conditions}

\author{Gregor~Gantner}
\address{University of Twente, Department of Applied Mathematics, P.O. Box 217, 7500 AE Enschede, The Netherlands}
\email{gregor.gantner@utwente.nl}

\author{Helmut Harbrecht}
\address{Departement f\"ur Mathematik und Informatik, Universit\"at Basel, Spiegelgasse 1, 4051 Basel, Switzerland}
\email{helmut.harbrecht@unibas.ch}

\author{Franz Nowakowsky}
\address{University of Twente, Department of Applied Mathematics, P.O. Box 217, 7500 AE Enschede, The Netherlands}
\email{franz.nowakowsky@utwente.nl}

\thanks{GG and FN acknowledge funding by the Deutsche Forschungsgemeinschaft (DFG, German Research Foundation) under Germany’s Excellence Strategy - EXC-2047/1 - 390685813 and by the Nederlandse Organisatie voor Wetenschappelijk Onderzoek (NWO, Dutch Research Council) through the Vidi project Optimal adaptive space-time boundary and finite element methods (OASTMethods) with file number VI.Vidi.243.148 under the grant https://doi.org/10.61686/LTBER75172. We further acknowledge the access to the Marvin cluster of the University of Bonn to perform the numerical experiments.}

\keywords{space-time boundary element method, heat equation, {\sl a posteriori} error estima-
tion, adaptive mesh refinement, computation of singular integrals}
\subjclass[2010]{35K05, 65D32, 65M15, 65N38, 65N50}
\begin{document}

\begin{abstract}
We consider the space-time boundary element method (BEM) for the heat equation with prescribed initial and Neumann data.
We propose a weighted-residual {\sl a posteriori} error estimator that is an upper bound for the unknown BEM error.
The possibly locally refined meshes are assumed to be parabolically scaled prismatic, i.e., their elements are tensor-products $J\times K$ of elements in time $J$ and space $K$ with $|J| \eqsim \diam(K)^2$. 
In the considered numerical experiments on two-dimensional domains in space, an adaptive algorithm steered by the derived estimator yields significantly faster convergence compared to uniform refinement, achieving near-optimal rates even in the presence of strong singularities.
\end{abstract}

\date{\today}
\maketitle

 \section{Introduction}

In the last years, there has been a growing interest in simultaneous space-time boundary element methods (BEM) for the heat equation~\cite{cs13,mst14,mst15,ht18,cr19,dns19,dzomk19,tausch19,zwom21,wmoz22, wo22, wo23, fgk25, aktw25}.
In contrast to the differential-operator-based variational formulation on the space-time cylinder, the variational formulation corresponding to space-time BEM is coercive~\cite{an87,costabel90} so that the discretized version always has a unique solution  regardless of the chosen trial space, which is even quasi-optimal in the natural energy norm.
Moreover, it is naturally applicable on unbounded domains and only requires a mesh of the lateral boundary of the space-time cylinder (as well as a mesh of the spatial domain in case of nonhomogeneous initial data) resulting in a dimension reduction.
The potential disadvantage that discretizations lead to dense matrices due to the nonlocality of the boundary integral operators has been tackled, e.g., in \cite{bn01a,bn01b,tausch07,mst14,mst15,ht18,wmoz22,aktw25} via wavelets, the fast multipole method, and $\mathcal{H}$-matrices.

Two often mentioned advantages of simultaneous space-time methods are their potential for massive parallelization as well as their potential for fully adaptive refinement to resolve singularities local in both space and time.
While the first advantage has been investigated in, e.g., \cite{dzomk19,zwom21}, the latter requires suitable {\slshape a posteriori} computable error estimators, which are so far only available for the heat equation with Dirichlet boundary conditions \cite{gv21,gv22}. 
In the latter work, the Faermann error estimator \cite{faermann00, faermann02} for the Laplace problem is generalized.

In the present manuscript, we extend the weighted-residual estimators~\cite{cs95,carstensen97,cmps04} for the Laplace problem to the heat equation with Neumann boundary conditions:
Let $\Omega\subset\R^{d}$, $d=2,3$, be a Lipschitz domain with connected boundary $\Gamma:=\partial\Omega$ and $T>0$ a given end time point with corresponding time interval $I:=(0,T)$. While extensions to piecewise smooth curved boundaries are possible, for the ease of presentation, we assume that $\Omega$ is polygonal for $d=2$ and polyhedral for $d=3$.
We abbreviate the space-time cylinder $Q:=I\times\Omega$ with lateral boundary $\Sigma:=I\times\Gamma$ and corresponding outer normal vector $\n\in\R^{d}$.
With the heat kernel
\begin{align*}
	G(t,\x) := \begin{cases} \frac{1}{(4\pi t)^{d/2}} \, e^{-\frac{|\x|^2}{4t}} \quad &\text{for }(t,\x)\in (0,\infty)\times\R^{d},\\
	0 \quad &\text{else},
 	\end{cases}
\end{align*}
and a given function $f:\Sigma\to\R$, we consider the boundary integral equation
\begin{align}\label{eq:hypersing operator}
	(\mathscr{W} u)(t,\x) := -\partial_{\n(\x)} \int_\Sigma \partial_{\n(\y)}G(t-s,\x-\y) u(s,\y) \d \y \d s = f(t,\x) \quad\text{for }(t,\x)\in \Sigma.
\end{align}
Here, $\mathscr{W}$ is the hyper-singular operator. Note that the integral kernel is given as
\begin{equation}\label{eq:partialnyG}
  \partial_{\n(\y)}G(t-s,\x-\y)=(\x-\y)\cdot \n(\y)\frac{1}{2(t-s)}G(t-s,\x-\y).
\end{equation}
For given initial condition $U_0:\Omega\to\R$ and Neumann datum $\phi:\Sigma\to\R$, such equations arise from the heat equation
\begin{align}\label{eq:interior}
	\begin{array}{rcll}
	\partial_t U - \Delta_\x U & = & 0 & \text{ on } Q,\\
	\partial_\n U & = & \phi & \text{ on }\Sigma,\\
	U(0,\cdot) & = & U_0 & \text{ on }\Omega;
	\end{array}
\end{align}
see Section \ref{sec:integral equations}.

Let $\PP_h$ be a mesh of the space-time boundary $\Sigma$ consisting of prismatic elements $J\times K$ with intervals  $J\subseteq \overline I$ and simplices $K\subseteq \Gamma$, and $\widetilde S^{p_t,p_\x}(\PP_h)$ the associated space of continuous piecewise polynomials of some fixed degree in time and space that vanish at $t=0$.
Further, let $u_h\in\widetilde S^{p_t,p_\x}(\PP_h)$ be the Galerkin approximation of $u$ with respect to the coercive bilinear form induced by $\W$.
As $\W$ is an isomorphism from the anisotropic Sobolev space $H^{1/2,1/4}(\Sigma):= L^2 (I; H^{1/2}(\Gamma)) \cap H^{1/4}(I; L^2(\Gamma))$  to its dual space $H^{-1/2,-1/4}(\Sigma):=H^{1/2,1/4}(\Sigma)'$, the discretization error $\norm{u-u_h}{H^{1/2,1/4}(\Sigma)}$ is equivalent to the norm of the residual $\norm{f-\mathscr{W}u_h}{H^{-1/2,-1/4}(\Sigma)}$.
Under certain regularity assumptions on the mesh and the \emph{local} parabolic scaling $|J|\eqsim\diam(K)^2$ for all $J\times K\in\PP_h$, we show that
\begin{equation}\label{eq:intreliability}
	\norm{u-u_h}{H^{1/2,1/4}(\Sigma)}^2 \eqsim \norm{f-\mathscr{W}u_h}{H^{-1/2,-1/4}(\Sigma)}^2
	\lesssim \sum_{J\times K\in\PP_h} \diam(K)  \norm{f-\W u_h}{L^2(J\times K)}^2.
\end{equation}
Assuming $f\in L^2(\Sigma)$, the right-hand side only makes sense if $\mathscr{W}u_h\in L^2(\Sigma)$, which follows from the regularity $\mathscr{W}:\H^{1,1/2}(\Sigma) \to L^2(\Sigma)$ in conjunction with the inclusion $\widetilde S^{p_t,p_\x}(\PP_h)\subset \H^{1,1/2}(\Sigma):=\set{v \in L^2(I;H^1(\Gamma)) \cap H^{1/2}(I;L^2(\Gamma))}{v(0,\cdot) = 0}$. 
 Such a regularity result is valid for piecewise smooth (including in particular polygonal and polyhedral) domains $\Omega$; see  \cite[Section 4]{costabel90}. 

Building on the derived {\sl a posteriori} computable error estimator \eqref{eq:intreliability}, we further propose an adaptive algorithm and numerically investigate it.
In all considered experiments (for $d=2$),  reliability of the estimator \eqref{eq:intreliability} is empirically confirmed and also efficiency, i.e., the converse inequality, is observed. 
Compared to uniform refinement, the adaptive algorithm yields significantly faster convergence, achieving near-optimal rates with respect to the number of degrees of freedom, even in the presence of strong singularities. 
We provide full details on the numerical implementation of the proposed algorithm, including the computation of the Galerkin system and the error estimator. 
In particular, for arbitrary polynomial degree $p_t$ in time, we give explicit formulas for the time integrals in the boundary integral operators applied to and tested with piecewise polynomials of that degree, which are so far only available for $p_t = 0$; see, e.g., \cite{costabel90,reinarz15,zwom21}.

\subsection*{Outline}

The remainder of this work is organized as follows: Section 2 summarizes the general principles of the space-time boundary element method for the heat equation, including prismatic boundary meshes and piecewise polynomial ansatz spaces along with a basis for the lowest-order case $p_t = p_x = 1$. 
Section 3 establishes Poincaré-type inequalities for anisotropic Sobolev spaces (Section 3.1) and constructs a Clément-type interpolation operator (Section 3.2).
The latter is used to  derive our main result, reliability of a weighted-residual estimator (Theorem 3.6).  
Finally, Section 4 introduces an adaptive algorithm which is based on the derived error estimator. 
This algorithm is subsequently applied for $d=2$ to several concrete examples with typical singularities in space and time. 
The stable implementation is discussed in Appendix A.

\section{Preliminaries}\label{sec:preliminaries}

\subsection{General notation}
Throughout and without any ambiguity, $|\cdot|$ denotes the absolute value of scalars, the Euclidean norm of vectors in $\R^n$, or the the measure of a set in $\R^n$, e.g., the length of an interval or the area of a surface in $\R^3$.
We write $A\lesssim B$ to abbreviate $A\le CB$ with some generic constant $C>0$, which is clear from the context.
Moreover, $A\eqsim B$ abbreviates $A\lesssim B\lesssim A$.

\subsection{Anisotropic Sobolev spaces}\label{sec:anisotropic spaces}
For a measurable $d$-dimensional $\omega \subseteq \Omega$ or $(d-1)$-dimensional $\omega \subseteq \Gamma$, and $\mu\in(0,1]$, we first recall the Sobolev space
\begin{align*}
	H^{\mu}(\omega):=\set{v\in L^2(\omega)}{\norm{v}{H^{\mu}(\omega)} < \infty}
\end{align*}
associated with the Sobolev--Slobodeckij norm
\begin{align*}
	\norm{v}{H^{\mu}(\omega)}^2
	:= \norm{v}{L^2(\omega)}^2 + |v|_{H^{\mu}(\omega)}^2,
	\quad  |v|_{H^{\mu}(\omega)}^2
	:=\begin{cases}
	\int_\omega\int_\omega \frac{|v(\x)-v(\y)|^2}{|\x-\y|^{{\rm dim}(\omega)+2\mu}}\d\y\d\x &\text{ if }\mu\in(0,1),
	\\
	\norm{\nabla_\omega v}{L^2(\omega)}^2 \quad&\text{ if }\mu=1,
	\end{cases}
\end{align*}
where ${\rm dim}(\omega)$ denotes the dimension of $\omega$, i.e.,  $d$ or $d-1$, and $\nabla_\omega$ denotes the (weak) gradient on $\omega$, i.e., the standard gradient or the surface gradient.

Moreover, we define for any subinterval $J\subseteq\overline I$, $\nu\in(0,1]$, and any Banach space $X$,
\begin{align*}
	H^\nu(J;X) := \set{v\in L^2(J;X)}{\norm{v}{H^{\nu}(J;X)} < \infty}
\end{align*}
associated with the norm
\begin{align*}
	\norm{v}{H^{\nu}(J;X)}^2
	:= \norm{v}{L^2(J;X)}^2 + |v|_{H^{\nu}(J;X)}^2,
	\quad  |v|_{H^{\nu}(J;X)}^2
	:=\begin{cases}
	\int_J\int_J \frac{\norm{v(t)-v(s)}{X}^2}{|t-s|^{1+2\nu}}\d s\d t &\text{ if }\nu\in(0,1), 
	\\
	\norm{\partial_t v}{L^2(J;X)}^2 &\text{ if }\nu=1,
	\end{cases}
\end{align*}
where $\partial_t$ denotes the (weak) time derivative.
If $X=\R$, we simply write $H^\nu(J)$, $\norm{v}{H^\nu(J)}$, and $|v|_{H^\nu(J)}$.
We recall the anisotropic Sobolev space
\begin{align*}
	H^{\mu,\nu}(J\times\omega) := L^2(J;H^{\mu}(\omega)) \cap H^{\nu}(J;L^2(\omega))
\end{align*}
with corresponding norm
\begin{align*}
	\norm{v}{H^{\mu,\nu}(J\times\omega)}^2
	:= \norm{v}{L^2(J;H^{\mu}(\omega))}^2 + \norm{v}{H^{\nu}(J;L^2(\omega))}^2
	\quad\text{for all } v\in H^{\mu,\nu}(J\times\omega).
\end{align*}
We will sometimes use the abbreviation
\begin{align*}
	|v|_{L^2(J;H^{\mu}(\omega))}^2 := \int_J |v(t,\cdot)|_{H^\mu(\omega)}^2 \d t
	\quad\text{for all } v\in L^2(J;H^\mu(\omega)).
\end{align*}

For $\omega\in\{\Omega,\Gamma\}$, we denote by $H^{-\mu,-\nu}(I\times\omega)$ the dual space of $H^{\mu,\nu}(I\times\omega)$ with duality pairing $\dual{\cdot}{\cdot}_{I\times\omega}$.
We interpret $L^2(I\times\omega)$ as subspace of $H^{-\mu,-\nu}(I\times\omega)$ via
\begin{align*}
	\dual{v}{\psi}_{I\times\omega} := \int_I\int_\omega v(t,\x) \psi(t,\x) \d\x \d t
	\quad\text{for all }v\in H^{\mu,\nu}(I\times\omega) \text{ and }\psi\in L^2(I\times\omega).
\end{align*}
We further recall the space
\begin{align*}
	\H^{\mu,\nu}(I\times\omega) := \set{v\in H^{\mu,\nu}(I\times\omega)}{v(0,\cdot)=0},
\end{align*}
which coincides with $H^{\mu,\nu}(I\times\omega)$ for $\nu<1/2$, and 
\begin{align*}
	H^{1,1/2}{(Q;\partial_t-\Delta_\x)}:=\set{v\in H^{1,1/2}(Q)}{\partial_tv-\Delta_\x v\in L^2(Q)},
\end{align*}
which is equipped with the canonical graph norm. For more details on the defined anisotropic Sobolev spaces, we refer, e.g., to \cite{costabel90}.

\subsection{Boundary integral equations}\label{sec:integral equations}
For given initial condition $U_0:\Omega\to\R$ and Neumann datum $\phi:\Sigma\to\R$, we consider the
heat equation \eqref{eq:interior}.
It is well-known that for $U_0\in L^2(\Omega)$ and $\phi\in H^{-1/2,-1/4}(\Sigma)$, the heat equation~\eqref{eq:interior} admits a unique solution
$u\in H^{1,1/2}(Q)$.
With the lateral trace $u:=U|_\Sigma \in H^{1/2,1/4}(\Sigma)$, $U$ satisfies the representation formula
\begin{align}\label{eq:representation}
	U = \widetilde \M_0 U_0 + \widetilde \V\phi  - \widetilde \K u,
\end{align}
where
\begin{align}\label{eq:initial potential}
	(\widetilde \M_0 U_0)(t,\x)&:=\int_\Omega G(t,\x-\y) U_0(\y) \d\y \quad\text{for all }(t,\x)\in Q
\intertext{denotes the initial potential,}
	\label{eq:single potential}
	(\widetilde \V \phi)(t,\x)&:=\int_\Sigma G(t-s,\x-\y) \phi(s, \y) \d\y \d s \quad\text{for all }(t,\x)\in Q
\intertext{denotes the single-layer potential, and }
	\label{eq:double potential}
	(\widetilde \K u)(t,\x)&:=\int_\Sigma \partial_{\n(\y)}G(t-s,\x-\y) u(s, \y) \d\y \d s \quad\text{for all }(t,\x)\in Q
\end{align}
denotes the double-layer potential.
These linear operators satisfy the mapping properties $\widetilde \M_0:L^2(\Omega)\to H^{1,1/2}(Q;\partial_t-\Delta_\x)$, $\widetilde \V:H^{-1/2,-1/4}(\Sigma)\to H^{1,1/2}(Q;\partial_t-\Delta_\x)$, and $\widetilde \K:H^{1/2,1/4}(\Sigma)\to H^{1,1/2}(Q;\partial_t-\Delta_\x)$.
We denote the normal derivative $\partial_\n(\cdot):H^{1,1/2}(Q;\partial_t-\Delta_\x)\to H^{-1/2,-1/4}(\Sigma)$ of these potentials as follows
\begin{align*}
	\partial_\n(\widetilde \M_0 U_0) =: \M_1 U_0, \quad
	\partial_\n(\widetilde \V \phi) - \phi/2 =: {\mathscr N} \phi, \quad
	-\partial_\n(\widetilde \K u) =: \W u .
\end{align*}
Applying the normal derivative to \eqref{eq:representation} thus results in
\begin{align}\label{eq:direct}
	\W u =  (1/2-{\mathscr N}) \phi - \M_1 U_0 ,
\end{align}
i.e., \eqref{eq:hypersing operator} with $f:= (1/2-{\mathscr N}) \phi - \M_1 U_0$.
 For $U_0\in L^2(\Omega)$ and sufficiently smooth $\phi$, one has the identities
$$(\M_1 U_0)(t,\x)=\int_\Omega \partial_{\n(\x)}G(t,\x-\y) U_0(\y) \d\y \quad \text{for all } (t,\x) \in \Sigma$$ and
 $$({\mathscr N} \phi)(t,\x)= \int_\Sigma \partial_{\n(\x)}G(t-s,\x-\y)\phi(s,\y)\d\y\d s \quad \text{for all } (t,\x) \in \Sigma.$$
As the hyper-singular operator $\W$ is coercive, i.e.,
\begin{align}\label{eq:coercivity}
	\dual{\W v}{v}_\Sigma \ge c_{\rm coe} \norm{v}{H^{1/2,1/4}(\Sigma)}^2 \quad \text{for all }v\in H^{1/2,1/4}(\Sigma)
\end{align}
with some constant $c_{\rm coe}>0$, \eqref{eq:direct} is uniquely solvable and the solution $u$ is just the missing lateral trace $U|_\Sigma$ to compute $U$ via the representation formula~\eqref{eq:representation}.

Alternatively, one can make the ansatz $U=\widetilde\M_0 U_0 - \widetilde\K u$.
Indeed, both $\widetilde\M_0 U_0$ and $\widetilde\K u$ satisfy the heat equation, where $\widetilde\M_0U_0$ restricted to $\{0\}\times\Omega$ coincides with $U_0$ and  $\widetilde\K u$ vanishes there.
To satisfy the Dirichlet boundary conditions, one has to solve
\begin{align}\label{eq:indirect}
	\W u =  \phi - \M_1 U_0,
\end{align}
i.e., \eqref{eq:hypersing operator} with $f:=\phi - \M_1 U_0$.
While \eqref{eq:direct} is called direct method, as it directly provides the physically relevant quantity $u=U|_\Sigma$, \eqref{eq:indirect} is called indirect method.

For more details and proofs, we refer to the seminal works \cite{an87,noon88,costabel90}, which consider $U_0=0$, and to \cite{dns19,dohr19} for the general case.

We finally mention the additional regularities $\W:\H^{1,1/2}(\Sigma)\to L^2(\Sigma)$ and $\mathscr{N}:L^2(\Sigma)\to L^2(\Sigma)$, stated in \cite[Section 4]{costabel90} for piecewise smooth domains $\Omega$; see also \cite[Remark~3.1.18]{ss11} for the additional regularity of the spatial trace operator. The mapping property $\W^{-1}: L^2(\Sigma) \to \widetilde H^{1,1/2}(\Sigma)$ is even satisfied for general Lipschitz domains $\Omega$. 
Moreover, replacing $\Omega$ by $\R^d$ and considering $(t,\x)\in I\times \R^d$ in \eqref{eq:initial potential}, we extend $\widetilde\M_0$ to an operator on $H^1(\R^d)$. 
Parabolic regularity gives that $\widetilde\M_0:H^1(\R^d)\to  H^{2,1}(I\times \R^d)$ with $H^{2,1}(I\times \R^d)$ defined analogously as in Section~\ref{sec:anisotropic spaces}. 
(Indeed, on $\R^d$, this is a simple consequence of the characterization of Sobolev norms in terms of the Fourier transform, e.g., ~\cite[Theorem~3.16]{mclean00}.)
Extending functions $U_0\in H_0^1(\Omega)$ by zero to $H^1(\R^d)$ and restricting $\widetilde\M_0 U_0$ to $Q$, we further see that $\widetilde\M_0:H_0^1(\Omega)\to  H^{2,1}(Q)$ with $H^{2,1}(Q)$ defined as in Section~\ref{sec:anisotropic spaces}. 
Since $\partial_\n(\cdot): H^{2,1}(Q) \to L^2(\Sigma)$, we conclude that $\M_1:H_0^1(\Omega) \to L^2(\Sigma)$.

\begin{remark}
For sufficiently regular $\Gamma$, one can show the regularity $\partial_\n(\cdot): H^{2,1}(Q) \to H^{1/2,1/4}(\Sigma)$. 
This follows for instance using wavelet expansions; see, e.g., \cite{ss09,cr19}.
From interpolation theory, one can then derive that $\M_1:\H^{1/2}(\Omega) \to L^2(\Sigma)$, where $\H^{1/2}(\Omega):=[L^2(\Omega); H_0^1(\Omega)]_{1/2}$.
However, we stress that while $\H^\mu(\Omega):=[L^2(\Omega); H_0^1(\Omega)]_{\mu} = H^\mu(\Omega)$ for $\mu\in[0,1/2)$, this is not the case for $\mu=1/2$; see, e.g., \cite[Theorems~3.33 and 3.40]{mclean00}.
Thus, like $H_0^1(\Omega)$, also $\H^{1/2}(\Omega)$ encodes a notion of boundary conditions. 
\end{remark}

\subsection{Boundary meshes}\label{sec:meshes}
Throughout this work, we consider prismatic meshes $\PP_h$ of $\Sigma$. 
This means that $\PP_h$ is a set of prisms of the form $P = J\times K$ with closed time intervals $J \subset \overline{I}$ and closed simplices $K \subset \Gamma$ that forms a partition of $ \Sigma$ in the sense that $\overline \Sigma = \bigcup_{P\in \PP_h} P$ and $P \cap P'$ has $d$-dimensional measure zero for all $P, P' \in \PP_h$ with $P \neq P'$. In addition, we assume that $\PP_h$ satisfies the following three properties.
\begin{itemize}
\item[{\rm (i)}]  If the intersection $P \cap P'$ of any $P,P'\in\PP_h$ has positive $(d-1)$-dimensional measure, then $ P \cap P'$ is a hyperface of $P$ or $P'$. 
\item[{\rm (ii)}]  If $J'\times K' = P'\in \PP_h$ and $J''\times K'' =  P'' \in \PP_h$ are temporal neighbors of $J\times K=P\in \PP_h$ on the same side, i.e., if $P \cap P'$ and $P \cap P''$ are $(d-1)$-dimensional subsets of the same hyperface $\{t\}\times K$ of $P$, then $J' = J''$. 
\item[{\rm (iii)}] For almost every $t\in I$, the spatial triangulation $\set{K}{J\times K\in \PP_h, t\in J}$ is conforming (which is trivially satisfied if $d=2$).
\end{itemize}

We further suppose the existence of $\emph{uniform}$ constants $C_\text{shape}, C_\text{lqu}^t, C_\text{lqu}^\x\geq 1$ such that:
\begin{itemize}

\item[{\rm (iv)}]  the spatial parts in $\PP_h$ are shape-regular, i.e., 
\begin{equation}\label{Cshape}
	C_\text{shape}^{-1} |K| \le \diam(K)^{d-1} \le C_\text{shape} |K| \quad \text{for all } J \times K \in \PP_h, 
\end{equation}
(which is trivially satisfied if $d=2$);

\item[{\rm (v)}] $\PP_h$ is locally quasi-uniform, i.e.,
\begin{align}\label{Cquasiunif}
	|J| \le C_\text{lqu}^t |J'| \quad \text{and} \quad \diam(K) \le C_\text{lqu}^\x \diam(K')\quad 
\end{align}
for all $J \times K, J' \times K' \in \PP_h$ with $(J \times K) \cap (J' \times K') \neq \emptyset$.
\end{itemize}
In particular, the reliability constant of \eqref{eq:intreliability} will depend on these constants; see Theorem~\ref{thm:reliability}  below.
A mesh refinement strategy that, starting from a tensor mesh, guarantees all these properties is given in Section~\ref{sec:adaptiveAlg}.

\subsection{Boundary element method}\label{sec:bem}

Given a finite-dimensional subspace $X_h \subset H^{1/2,1/4}(\Sigma)$, let $u_h\in X_h$ denote the Galerkin discretization of the solution $u$ of the boundary integral equation~\eqref{eq:hypersing operator}, i.e.,
\begin{align}\label{eq:Galerkin}
	\dual{\W u_h}{v_h}_\Sigma = \dual{f}{v_h}_\Sigma
	\quad \text{for all }v_h\in X_h,
\end{align}
which is equivalent to the Galerkin orthogonality
\begin{align}\label{eq:orthogonality}
	\dual{\W(u-u_h)}{v_h}_\Sigma = 0
	\quad \text{for all }v_h\in X_h.
\end{align}
Note that coercivity~\eqref{eq:coercivity} guarantees unique solvability of the latter equations, and the C\'ea lemma applies, i.e.,
\begin{align}\label{eq:cea}
	\norm{u-u_h}{H^{1/2,1/4}(\Sigma)}
	\le \frac{\|\W\|}{c_{\rm coe}}\,\min_{v_h\in X_h} \norm{u-v_h}{H^{1/2,1/4}(\Sigma)},
\end{align}
where $\|\W\|$ denotes the operator norm of $\W:H^{1/2,1/4}(\Sigma) \to H^{-1/2,-1/4}(\Sigma)$.

Given a prismatic mesh as in Section \ref{sec:meshes}, a natural choice of $X_h$ would be the space of all $\PP_h$-piecewise polynomials of some fixed degree $p_t \in \N_0$ in time and $p_x \in \N$ in space that are continuous in space. However, in order to ensure $\W u_h \in L^2(\Sigma)$ for the envisaged {\sl a posteriori} error estimate \eqref{eq:intreliability}, we require $X_h \subset \widetilde H^{1,1/2}(\Sigma)$; see Section \ref{sec:integral equations}. With the space of all continuous $\PP_h$-piecewise polynomials $S^{p_t,p_x}(\PP_h)$ of fixed degree $p_t\in \N$ in time and $p_x \in \N$ in space, we hence consider 
\begin{equation}
	X_h := \widetilde S^{p_t,p_\x}(\PP_h) := \set{v_h \in S^{p_t,p_\x}(\PP_h)}{v_h(0,\cdot) = 0}.
\end{equation}
Note that for $f \in L^2(\Sigma)$, e.g., as in \eqref{eq:direct} or \eqref{eq:indirect} with $\phi\in L^2(\Sigma)$ and $U_0 \in H^1_0(\Omega)$, we have the additional regularity $u \in \widetilde H^{1,1/2}(\Sigma)$ (see Section \ref{sec:integral equations}) so that it makes sense to approximate $u$ by functions vanishing at $t = 0$.

If $\PP_h = \set{J\times K}{J\in \PP_{h_t}, K\in\PP_{h_\x}}$ is a full tensor mesh corresponding to a mesh $\PP_{h_\x}$ of $\Gamma$ with uniform mesh size $h_\x\eqsim \diam(K)$ for all $K\in\PP_{h_\x}$ and a mesh $\PP_{h_t}$ of $\overline I$ with uniform step size $h_t\eqsim h_\x^{\sigma}$ for some $\sigma>0$, then, \cite[Proposition~5.3]{costabel90}\footnote{To be precise, the given reference only applies for $X_h = S^{p_t,p_\x}(\PP_h)$, but the proof directly extends to $X_h = \widetilde S^{p_t,p_\x}(\PP_h)$.} suggests the error decay rate
\begin{align}\label{eq:rates}
	\min_{v_h\in\widetilde S^{p_t,p_\x}(\PP_h)} \norm{u-v_h}{H^{1/2,1/4}(\Sigma)} \lesssim N_h^{-\frac{\min\{p_\x+1/2,(p_t+3/4)\sigma\}}{d-1+\sigma}}
	\quad\text{for all smooth }u \text{ with }u(0,\cdot)=0;
\end{align}
see also \cite[Theorem 3.3]{cr19} for Dirichlet boundary value problems.
Here, $N_h\eqsim h_\x^{-(d-1)} h_t^{-1} = h_\x^{-d+1-\sigma}$ denotes the number of degrees of freedom in $X_h$.
The optimal grading parameter is thus given by $\sigma = (p_\x+\tfrac12)/(p_t+\tfrac34)$ with resulting rate $\OO\big(N_h^{-\frac{p_\x+1/2}{d-1+\sigma}}\big)$. Similarly, we expect the decay rate 
 $\OO\big(N_h^-{\frac{\min\{p_\x+1, (p_t+1)\sigma\}}{d-1+\sigma}}\big)$
for the minimal $L^2(\Sigma)$-error.

\subsection{Partition of unity}\label{sec:partitionofunity}

We introduce a basis of the spaces $S^{1,1}(\PP_h)$ and $\widetilde S^{1,1}(\PP_h)$.
The sets of free nodes for $S^{1,1}(\PP_h)$ and $\widetilde S^{1,1}(\PP_h)$ are defined as 
\[
\NN_h := \{\z\in \overline{\Sigma} : \z \text{ is a vertex of every } P\in\PP_h \text{ with } \z \in P\} \quad \text{and}\quad \widetilde{\NN_h}:=\NN_h \setminus (\{0\}\times\Gamma),
\]
respectively. Any other vertex that is not in $\NN_h$ is called hanging node, and the corresponding set is denoted by $\NN_h^\perp$.
For each free node $\z\in\NN_h$, the nodal basis function $\varphi_{h,\z} \in S^{1,1}(\mathcal{P}_h)$ is characterized by
\begin{equation}\label{eq:kronecker}
\varphi_{h,\z}(\z^\prime)=\delta_{\z,\z^\prime} \quad\text{ for all } \z^\prime\in \NN_h.
\end{equation}
Proposition~\ref{prop:varphi} states that $\varphi_{h,\z}$ is indeed well defined by \eqref{eq:kronecker} and provides important properties of both $\varphi_{h,\z}$ as well as its corresponding support $\omega_h(\z):=\supp(\varphi_{h,\z})$. 
The proof is given in Appendix~\ref{appendix:Prop}.

\begin{proposition}\label{prop:varphi}
There hold the following properties {\rm (i)--(iii)}:
\begin{enumerate}
\item[ {\rm (i)}] For each $\z\in\NN_h$, there exists a unique function $\varphi_{h,\z} \in S^{1,1}(\mathcal{P}_h)$ satisfying \eqref{eq:kronecker}, where $\varphi_{h,\z}(t,\x)\ge 0$ for all $(t,\x)\in \Sigma$. 
\item[ {\rm (ii)}] For each $\z\in\NN_h$, $\omega_h(\z)$ consists of a uniformly bounded number of elements $P\in \PP_h$. Conversely, each $P\in\PP_h$ is contained in a uniformly bounded number of patches $\omega_h(\z)$ for $\z\in \NN_h$. Both bounds depend only on $C_{\rm shape}$, $C_{\rm lqu}^t$, and $C_{\rm lqu}^\x$ from \eqref{Cshape}--\eqref{Cquasiunif}.
\item[ {\rm (iii)}] The set $\set{\varphi_{h,\z}}{\z\in \NN_h}$ is a basis of $S^{1,1}(\PP_h)$, and the set $\set{\varphi_{h,\z}}{\z\in \widetilde\NN_h}$ is a basis of $\widetilde S^{1,1}(\PP_h)$. In particular, it holds that
\begin{equation}\label{eq:partitionofunity}
\sum_{\z\in\NN_h} \varphi_{h,\z} = 1.
\end{equation}
\end{enumerate}
\end{proposition}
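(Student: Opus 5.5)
The plan is to construct $\varphi_{h,\z}$ element by element and then check conformity. On each prism $P=J\times K$ the space of local polynomials of degree $1$ in time and $1$ in space is the tensor product $\PP_1(J)\otimes\PP_1(K)$. Its dimension equals the number of vertices of $P$, namely $2d$, and the restriction to the vertices is bijective because this is a tensor product of two unisolvent nodal bases. A function in $S^{1,1}(\PP_h)$ is therefore determined by its values at all vertices of all elements, i.e.\ at $\NN_h\cup\NN_h^\perp$. The point is that the values at hanging nodes are forced by the values at free nodes.

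The first step is to classify hanging nodes using properties (i)--(iii) of Section~\ref{sec:meshes}. By (iii), spatial slices are conforming for a.e.\ $t$. Hence a hanging node $\z=(t,\x)$ can only arise in time: it is a vertex of some $P'$ lying on a hyperface $J\times F$ of a neighbour $P=J\times K$ with $t$ in the interior of $J$, where $F$ is a spatial face of $K$. The other possibility is that $\z$ lies on a temporal face $\{t\}\times K$ with $\x$ interior to a spatial face, for instance in the non-conforming transition between two time slabs. Continuity forces the value at such a node to be the linear interpolant, in time or in space, of the values at the vertices of $P$. By (ii), the neighbours on one temporal side share the same interval, so one never needs to interpolate from another hanging node with a different interval. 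I would make this precise by showing that a vertex of $P$ is never itself hanging with respect to elements larger than $P$ in the relevant direction. This gives a well-founded, uniquely determined extension from $\NN_h$ to all vertices. Defining $\varphi_{h,\z}$ elementwise by this extension and checking continuity across every common hyperface (an affine function on a face is determined by the face's free vertices) yields existence and uniqueness in (i). Nonnegativity follows because hanging values are convex combinations of free values, and the tensor-product hat functions on each element are nonnegative.

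For (ii), $\supp\varphi_{h,\z}$ consists of the elements having $\z$ as vertex, together with elements whose hanging vertices depend on $\z$ through the interpolation chain. Local quasi-uniformity (v) and shape regularity (iv) bound the number of elements touching a point: in space by the standard argument, and in time because intervals meeting at a point have comparable lengths. They also bound the length of the dependency chains, since each interpolation step moves to an element at most a fixed factor larger. This gives both uniform bounds, depending only on $C_{\rm shape}$, $C_{\rm lqu}^t$ and $C_{\rm lqu}^\x$.

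For (iii), linear independence is immediate from \eqref{eq:kronecker}. Spanning follows because any $v\in S^{1,1}(\PP_h)$ and $\sum_{\z\in\NN_h} v(\z)\varphi_{h,\z}$ agree at the free nodes, and hence everywhere by the uniqueness part of (i). Applied to $v\equiv1$, this gives \eqref{eq:partitionofunity}. For $\widetilde S^{1,1}(\PP_h)$, the value of $v$ at a node on $\{0\}\times\Gamma$ is zero. No node with $t>0$ depends on nodes at $t=0$ except through interpolation within a face at $t=0$, which stays in $\{0\}\times\Gamma$. Therefore $v=\sum_{\z\in\widetilde\NN_h}v(\z)\varphi_{h,\z}$, and conversely each such $\varphi_{h,\z}$ vanishes at $t=0$.

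The main obstacle is the hanging-node dependency analysis: showing that the constraint system is acyclic and produces continuous functions. Assumption (ii) is exactly what rules out conflicting temporal interpolations, so I would try that step first with a careful induction over element size.
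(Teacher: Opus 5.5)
Your argument depends on the claim that the hanging-node constraints can be resolved by a well-founded interpolation scheme whose chains have uniformly bounded length. You leave exactly this step open (``careful induction over element size''), and in the generality you state it (any $d$, only the mesh assumptions (i)--(v) of Section~\ref{sec:meshes}) it is false.

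Take $d=3$, a time-independent spatial triangulation, and an interior vertex $\x$ with fan $K_1,K_a,K_2,K_b,K_c,K_d$, so that $K_1\cap K_2=\{\x\}$. Let $T=2N\tau$, and use the time partitions $\{0,2\tau,4\tau,\dots,T\}$ over $K_1$, $\{0,\tau,3\tau,\dots,T-\tau,T\}$ over $K_2$, and $\tau\Z\cap[0,T]$ over every other triangle. Prisms over edge-adjacent triangles then have nested time intervals, and $K_1$- and $K_2$-prisms meet only in segments $\{\x\}\times J$. Hence (i)--(v) hold, even with $\tau\eqsim\diam(K)^2$.

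In this mesh, every $(j\tau,\x)$ with $0<j<2N$ is hanging. It lies in the interior of exactly one edge: the time-edge over $K_1$ for odd $j$, and the one over $K_2$ for even $j$. The endpoints $((j\pm1)\tau,\x)$ of that edge are again hanging. This has three consequences:
\begin{itemize}
\item The constraint graph contains the cycle $(\tau,\x)\leftrightarrow(2\tau,\x)$, and there is no alternative face to interpolate from.
\item All edges involved have length $2\tau$, so no size ordering helps. Mesh assumption~(ii) only constrains temporal neighbours, so it cannot stop time-edges of non-adjacent elements at a common vertex from interlacing.
\item Continuity forces every $v_h\in S^{1,1}(\PP_h)$ to be affine in $t$ along $\{\x\}\times[0,T]$. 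Hence $\varphi_{h,(0,\x)}(t,\x)=1-t/T$, and $\omega_h((0,\x))$ contains at least $N$ elements while all mesh constants stay fixed.
\end{itemize}
So chain lengths are not controlled by (iv)--(v). Your remark that each step moves to an element at most a fixed factor larger would not bound them even without cycles, since nothing forces a strict increase. For $d=3$, item (ii) of the proposition cannot be derived from these assumptions at all; the paper's proof indeed only treats $d=2$.

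Your reduction of item (iii) to uniqueness, and of nonnegativity to convex combinations, match the paper. The missing idea for uniqueness is the paper's Lemma~\ref{lem:partitionofunity}, a discrete maximum principle that is insensitive to cycles:
\begin{itemize}
\item Fix a linear functional $L$ injective on all vertices.
\item Pick a hanging node $\z^*$ maximizing $|v_h|$ and, among those, maximizing $L$.
\item $v_h(\z^*)$ is a convex combination with positive weights of the values at the vertices of the edge or face containing $\z^*$ in its interior. So all these vertices attain the maximum, hence are hanging, and one of them has larger $L$, a contradiction.
\end{itemize}
Existence and the support bound then come from an explicit local construction for $d=2$ giving $\supp\varphi_{h,\z}\subseteq\pi_h^3(\z)$. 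Item (ii) follows by local quasi-uniformity, and item (iii) by the lemma.

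For $d=2$ your constructive route can be completed, but the needed input is structural rather than a size induction. From mesh assumptions (i)--(ii) one checks:
\begin{itemize}
\item Each hanging node lies in the interior of exactly one edge.
\item If that edge is a time-edge $J\times\{\x\}$, both its endpoints are free. Otherwise the element on the other side would contain the whole edge, and the node would be a vertex of no element.
\item If it is an edge $\{t\}\times K$, its endpoints are free or hang on time-edges.
\end{itemize}
Chains thus have length at most two, which gives existence, uniqueness, continuity and item (ii).

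Finally, in item (iii) your claim that no node with $t>0$ depends on nodes at $t=0$ is false: a node hanging inside $[0,t_b]\times\{\x\}$ does. What you actually need is that every vertex on $\{0\}\times\Gamma$ is free. This follows from mesh assumption (iii), non-overlap of elements, and $0\notin\operatorname{int}J$.
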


\section{A posteriori error estimation}\label{sec:aposteriori}

The proof of the {\sl a posteriori} error estimate \eqref{eq:intreliability} requires some preparations. 
In Section \ref{sec:poincare}, we prove Poincaré-type inequalities. These are subsequently used to derive an approximation property of the interpolation operator introduced in Section \ref{sec:interpolop}. 
With this interpolation operator, we conclude the proof of \eqref{eq:intreliability} in Section  \ref{sec:aposterioriproof}.

\subsection{Poincar\'e-type inequalities}\label{sec:poincare}

The following lemma is implicitly stated in the proof of \cite[Proposition 5.3]{costabel90}, while the proposition itself states the result for the full $H^{1/2,1/4}(\Sigma)$-norm.  We give a detailed proof for the sake of completeness.
\begin{lemma}\label{lem:parabolicpoincare}
Let $P=J\times \omega$ with a subinterval $J$ of $I$ and a $(d-1)$-dimensional subset $\omega$ of $\Gamma$. Then, for all $v\in H^{1/2,1/4}(\Sigma)$, it holds that 
\begin{equation}\label{eq:parabolicpoincare}
\|v-v_P\|^2_{L^2(P)} \leq 2\frac{\operatorname{diam}(\omega)^{d-1}}{|\omega|} {\rm diam}(\omega)\,|v|_{L^2(J;H^{1/2}(\omega))}^2
+ 2|J|^{1/2}\,|v|_{H^{1/4}(J;L^2(\omega))}^2 ,
\end{equation}
where $v_P:=|P|^{-1}\int_J \int_\omega v(t,\x) \d\x\d t$.
\end{lemma}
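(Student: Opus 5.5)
We split the error through the intermediate function given by the spatial mean at each fixed time. Let $\bar v(t):=|\omega|^{-1}\int_\omega v(t,\x)\d\x$, so that $v_P=|J|^{-1}\int_J \bar v(t)\d t$. Then
\begin{equation*}
v-v_P = \big(v-\bar v\big) + \big(\bar v - v_P\big),
\end{equation*}
and $(a+b)^2\le 2a^2+2b^2$ gives
\begin{equation*}
\|v-v_P\|_{L^2(P)}^2\le 2\|v-\bar v\|_{L^2(P)}^2+2\|\bar v-v_P\|_{L^2(P)}^2 .
\end{equation*}
This splitting accounts for the factors $2$ in \eqref{eq:parabolicpoincare}. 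We bound the spatial and temporal parts separately by elementary double-integral identities. No earlier results are needed beyond the definitions of the Sobolev--Slobodeckij seminorms in Section~\ref{sec:anisotropic spaces}.

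\emph{Spatial part.} Fix $t\in J$ and write $w:=v(t,\cdot)$ with mean $\bar w$. The identity
\begin{equation*}
\|w-\bar w\|_{L^2(\omega)}^2=\frac{1}{2|\omega|}\int_\omega\int_\omega|w(\x)-w(\y)|^2\d\y\d\x
\end{equation*}
holds. For $\x,\y\in\omega$ we have $|\x-\y|\le\diam(\omega)$, so $1\le \diam(\omega)^{d-1+1}/|\x-\y|^{d-1+1}$; here $\dim(\omega)=d-1$ and $\mu=1/2$. Inserting this bound gives
\begin{equation*}
\|w-\bar w\|_{L^2(\omega)}^2\le \frac{\diam(\omega)^{d}}{2|\omega|}\,|w|_{H^{1/2}(\omega)}^2 .
\end{equation*}
Integrating over $t\in J$ bounds $\|v-\bar v\|_{L^2(P)}^2$ by $\tfrac12\frac{\diam(\omega)^{d-1}}{|\omega|}\diam(\omega)\,|v|^2_{L^2(J;H^{1/2}(\omega))}$, which is better than required.

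\emph{Temporal part.} Since $\bar v-v_P$ is independent of $\x$ and has zero mean over $J$, the same identity in time gives
\begin{equation*}
\|\bar v-v_P\|_{L^2(P)}^2=|\omega|\,\|\bar v-v_P\|_{L^2(J)}^2=\frac{|\omega|}{2|J|}\int_J\int_J|\bar v(t)-\bar v(s)|^2\d s\d t .
\end{equation*}
Cauchy--Schwarz (Jensen) yields $|\omega|\,|\bar v(t)-\bar v(s)|^2\le\|v(t)-v(s)\|_{L^2(\omega)}^2$. Using $1\le |J|^{3/2}/|t-s|^{3/2}$ for $s,t\in J$, we obtain
\begin{equation*}
\|\bar v-v_P\|_{L^2(P)}^2\le \tfrac12|J|^{1/2}|v|_{H^{1/4}(J;L^2(\omega))}^2 .
\end{equation*}

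Adding the two bounds proves \eqref{eq:parabolicpoincare}, in fact with the constants $2$ replaced by $1$. The only delicate point is the measurability and finiteness bookkeeping: both seminorms on the right are finite for $v\in H^{1/2,1/4}(\Sigma)$ by restriction, since the double integrals over subsets are bounded by those over $I$ and $\Gamma$. If finiteness fails, the inequality is trivial. The main (mild) obstacle is only confirming that Jensen's inequality is applied in the correct direction for the spatial averages, which it is.
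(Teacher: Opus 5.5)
Your proof is correct and follows essentially the same route as the paper: you split $v-v_P$ through the spatial mean $\bar v=\Pi_\omega v$, apply $(a+b)^2\le 2a^2+2b^2$, and bound each piece by a double integral using $|\x-\y|\le\diam(\omega)$ and $|t-s|\le|J|$, with your Jensen step playing the role of the paper's $L^2$-contractivity of $\Pi_\omega$. Using the exact variance identity instead of the paper's Cauchy--Schwarz bound is what gives you the constants $1$ in place of $2$; moreover, $v-\bar v$ and $\bar v-v_P$ are $L^2(P)$-orthogonal, so the splitting itself costs no factor $2$ and even $\tfrac12$ would suffice.
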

\begin{proof} 
For  (almost) all $\x\in \omega$ and $t\in J$, define
\begin{align*}
\Pi_J v(\cdot,\x):= \frac{1}{|J|}\int_J v(t,\x) \d t,\quad
\Pi_\omega v(t,\cdot):= \frac{1}{|\omega|}\int_\omega v(t,\x) \d \x.
\end{align*}
Then, it holds that $v_P = \Pi_J \Pi_\omega v= \Pi_\omega \Pi_J v,$ which yields that
\begin{equation*}
\|v-v_P\|_{L^2(P)}\leq \|v-\Pi_\omega v\|_{L^2(P)}+\|\Pi_\omega v-\Pi_\omega\Pi_J v\|_{L^2(P)}.
\end{equation*}
In the following two steps, we estimate the two summands to conclude  \eqref{eq:parabolicpoincare}.

\textbf{Step 1:} The Cauchy–Schwarz inequality shows that
\begin{align*}
\|v-\Pi_\omega v\|_{L^2(P)}^2
&=\int_J\int_\omega\Biggl|\frac{1}{|\omega|}\int_\omega v(s,\y)-v(s,\x)\d\x\Biggr|^2 \d\y \d s s\\
&\leq \frac{1}{|\omega|} \int_J\int_\omega\int_\omega |v(s,\y)-v(s,\x)|^2\d\x \d\y\d s.
\end{align*}
Shape regularity \eqref{Cshape} and $|\x-\y|\leq \operatorname{diam}(\omega)$ for all $\x,\y\in \omega$ imply that
\begin{align*}
\|v-\Pi_\omega v\|_{L^2(P)}^2
&\leq \frac{\operatorname{diam}(\omega)^{d-1}}{|\omega|}\operatorname{diam}(\omega)^{1-d}
\int_J\int_\omega\int_\omega \frac{|\x-\y|^d}{|\x-\y|^d}\,|v(s,\y)-v(s,\x)|^2\d\x \d\y\d s \\
&\leq \frac{\operatorname{diam}(\omega)^{d-1}}{|\omega|}\operatorname{diam}(\omega)\,|v|_{L^2(J;H^{1/2}(\omega))}^2.
\end{align*}

\textbf{Step 2:} Since $\Pi_\omega$ is an orthogonal projection, a similar calculation as in Step 1 gives that
\[\|\Pi_\omega v-\Pi_\omega\Pi_J v\|^2_{L^2(P)}\leq \|v-\Pi_J v\|_{L^2(P)}^2\leq  |J|^{1/2}\,|v|_{H^{1/4}(J;L^2(\omega))}^2,\]
which concludes the proof.
\end{proof}
In order to prove a suitable approximation property for the interpolation operator constructed in Section \ref{sec:interpolop} below, we require a Poincar\'e-type inequality on more general local subsets of $\Sigma$. We follow the arguments of \cite[Lemma 5.4 and Remark 5.5]{dss25}, where such a generalization is provided for the space $H^{1,1/2}(Q)$ instead of $H^{1/2,1/4}(\Sigma)$. 
For a measurable set $\omega \subset \Sigma$ and $v \in H^{1/2,1/4}(\Sigma)$, we define the localized seminorms
\begin{align*}
|v|_{L^2H^{1/2}(\omega)}^2
&:=\int_I\int_\Gamma\int_\Gamma \chi_{\omega}(s,\x)\,\chi_{\omega}(s,\y)\,
\frac{|v(s,\x)-v(s,\y)|^2}{|\x-\y|^d} \d\y \d\x\d s,\\[4pt]
|v|_{H^{1/4}L^2(\omega)}^2
&:=\int_I\int_I\int_\Gamma \chi_{\omega}(t,\x)\,\chi_{\omega}(s,\x)\,
\frac{|v(t,\x)-v(s,\x)|^2}{|t-s|^{3/2}}\d\x \d s\d t,
\end{align*}
where $\chi_{\omega}\colon\Sigma\to\{0,1\}$ denotes the characteristic function on $\omega$.
For all $P\in \PP$, we consider the following local subsets
\begin{equation}\label{eq:patchomegaP}
\omega_h(P) := \bigcup_{\z\in \NN_h: P\subseteq \supp(\varphi_{h,\z})} \omega_h(\z).
\end{equation}
From the Poincar\'e-type inequality on $\omega_h(P)$, we then also derive a local estimate  for $\| v \|_{L^2(\omega_h(P))}$ if  $P$  is close to $\{0\}\times \Gamma$.

\begin{lemma}\label{lem:genparabolicpoincare}
There exists a constant $C_{\rm poinc}>0$ depending only on $C_{\rm shape}$, $C_{\rm lqu}^t$, and $C_{\rm lqu}^\x$ from \eqref{Cshape}--\eqref{Cquasiunif} such that for all $v\in H^{1/2,1/4}(\Sigma)$ and all $P=J\times K\in\PP_h$, it holds that
\begin{equation}\label{eq:genparabolicpoincare}
\|v-v_{\omega_h(P)}\|^2_{L^2(\omega_h(P))}
\leq C_{\rm poinc}\Bigl(\operatorname{diam}(K)\,|v|^2_{L^2H^{1/2}(\omega_h(P))} + |J|^{1/2}\,|v|^2_{H^{1/4}L^2(\omega_h(P))}\Bigr),
\end{equation}
where $v_{\omega_h(P)}:=|\omega_h(P)|^{-1}\int_{\omega_h(P)} v(t, \x)\d\x\d t$.

There further exists a constant $C_0>0$ depending only on \eqref{Cshape}--\eqref{Cquasiunif} such that for all $v \in H^{1/2,1/4}(\Sigma)$ and all $P = J \times K \in \PP_h$ with $P\subseteq \omega_h(\z)$ for some $\z\in\NN_h\cap(\{0\}\times \Gamma)$, it holds that
\begin{equation}\label{eq:genparabolicpoincareboundary}
\|v\|^2_{L^2(\omega_h(P))}
\leq C_0\Bigl(\operatorname{diam}(K)\,|v|^2_{L^2H^{1/2}(\omega_h(P))} 
+ |J|^{1/2}\bigl(|v|^2_{H^{1/4}L^2(\omega_h(P))}+\|t^{-1/4}v\|^2_{L^2(\omega_h(P))}\bigr)\Bigr).
\end{equation}
Here, $t^{-1/4}$ is an abbreviation for the mapping $(t,\x) \mapsto t^{-1/4}$.
\end{lemma}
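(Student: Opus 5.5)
We prove \eqref{eq:genparabolicpoincare} by a chaining argument over the elements of $\omega_h(P)$, in the spirit of \cite[Lemma 5.4]{dss25}, with Lemma~\ref{lem:parabolicpoincare} as the local building block. By Proposition~\ref{prop:varphi}(ii), $\omega_h(P)$ is a union of a uniformly bounded number $M$ of elements $P'=J'\times K'$. By local quasi-uniformity \eqref{Cquasiunif}, each of them satisfies $\diam(K')\eqsim\diam(K)$ and $|J'|\eqsim|J|$, with constants depending only on $C_{\rm lqu}^t,C_{\rm lqu}^\x$ and $M$. Moreover, the patch is connected through elements sharing hyperfaces.

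We start from the decomposition
\[
\|v-v_{\omega_h(P)}\|_{L^2(\omega_h(P))}^2\le\sum_{P'}\|v-c\|^2_{L^2(P')}
\]
for an arbitrary constant $c$; this holds because the mean is the best constant approximation. We then bound $\|v-c\|_{L^2(P')}\le\|v-v_{P'}\|_{L^2(P')}+|P'|^{1/2}|v_{P'}-c|$. The first term is handled by Lemma~\ref{lem:parabolicpoincare} with $\omega=K'$, combined with shape regularity \eqref{Cshape}. Note that $|v|_{L^2(J';H^{1/2}(K'))}$ and $|v|_{H^{1/4}(J';L^2(K'))}$ are bounded by the localized seminorms on $\omega_h(P)$, because $J'\times K'\subseteq\omega_h(P)$ and the integrand is nonnegative.

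For the differences of means, we fix $c=v_{P_0}$ for one element $P_0$ and connect any $P'$ to $P_0$ through a chain of at most $M$ neighbours. It then suffices to bound $|P_1|\,|v_{P_1}-v_{P_2}|^2$ for two elements sharing a hyperface. This is the main obstacle, because the neighbours need not form a product set, so the one-element lemma cannot be applied to $P_1\cup P_2$ directly.

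There are two cases.
\begin{itemize}
\item \emph{Spatial neighbours} ($J_1\cap J_2$ of positive length). We pass through the product $\widetilde P=(J_1\cap J_2)\times(K_1\cup K_2)$ or a suitable sub-box. Because of the hanging-node structure (ii), $J_1\cap J_2$ is comparable to $J_1$ for neighbouring elements in the patch. We use $|v_{P_i}-v_{\widetilde P_i}|$ with $\widetilde P_i=(J_1\cap J_2)\times K_i$, estimated by $|P_i|^{-1/2}\|v-v_{P_i}\|_{L^2(P_i)}$-type bounds. We then apply Lemma~\ref{lem:parabolicpoincare} on $\widetilde P$, whose spatial part $K_1\cup K_2$ has $\diam^{d-1}\eqsim|K_1\cup K_2|$. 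The cross terms $\x\in K_1$, $\y\in K_2$ at a common time $s$ are exactly contained in $|v|_{L^2H^{1/2}(\omega_h(P))}$.
\item \emph{Temporal neighbours}. Analogously we use $J_1\cup J_2$ times the smaller of $K_1\cap K_2$, which is a full simplex by property (ii). The cross terms at a common $\x$ are contained in $|v|_{H^{1/4}L^2(\omega_h(P))}$.
\end{itemize}
In both cases the scaling $\diam(K)$ and $|J|^{1/2}$ appear, and summing over the chains gives \eqref{eq:genparabolicpoincare}.

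For \eqref{eq:genparabolicpoincareboundary}, we write $\|v\|^2\le2\|v-v_{\omega_h(P)}\|^2+2|\omega_h(P)|\,v_{\omega_h(P)}^2$ and estimate the first term by \eqref{eq:genparabolicpoincare}. Since $P\subseteq\omega_h(\z)$ with $\z$ at $t=0$, the patch $\omega_h(\z)\subseteq\omega_h(P)$ contains an element $P_0=J_0\times K_0$ with $J_0=[0,|J_0|]$ and $|J_0|\eqsim|J|$. We bound the mean as
\[
|\omega_h(P)|\,v_{\omega_h(P)}^2\lesssim\|v-v_{\omega_h(P)}\|^2_{L^2(P_0)}+\|v\|_{L^2(P_0)}^2 .
\]
On $P_0$ we have $t\le|J_0|$, so $\|v\|^2_{L^2(P_0)}\le|J_0|^{1/2}\|t^{-1/4}v\|^2_{L^2(P_0)}\lesssim|J|^{1/2}\|t^{-1/4}v\|^2_{L^2(\omega_h(P))}$. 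Combining these bounds yields \eqref{eq:genparabolicpoincareboundary}, and all constants depend only on \eqref{Cshape}--\eqref{Cquasiunif}.
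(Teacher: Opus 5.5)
Your proof is correct. Its core ingredient matches the paper's: chain over a uniformly bounded number of product sets built from hyperface neighbours, apply Lemma~\ref{lem:parabolicpoincare} on each, and note that the cross terms lie in the localized seminorms. The bookkeeping differs.

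The paper passes to the coarsest conforming refinement of $\omega_h(P)$ and walks a path through its cells. It forms the overlapping cover $P_\ell=\widehat P_{\pi_{\ell-1}}\cup\widehat P_{\pi_\ell}$, whose members are products because the refinement is conforming. It then peels off one $P_\ell$ at a time by an induction using $|P_\ell\cap P^-|\eqsim|P_\ell|$, arriving at $\|v-v_{\omega_h(P)}\|^2_{L^2(\omega_h(P))}\lesssim\sum_\ell\|v-v_{P_\ell}\|^2_{L^2(P_\ell)}$.

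You instead stay on the original mesh. You fix the reference mean $v_{P_0}$, telescope differences of element means along a chain, and bridge each neighbour pair by the explicit boxes $(J_1\cap J_2)\times(K_1\cup K_2)$ and $(J_1\cup J_2)\times(K_1\cap K_2)$, which play exactly the role of the paper's $P_\ell$. Your route avoids justifying the existence and size properties of the conforming refinement and the path, and it makes transparent which mesh properties enter. The paper's is more compact once the cover is available.

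Your treatment of \eqref{eq:genparabolicpoincareboundary} is essentially the paper's Step~3. The only change is that you use an element $P_0$ actually touching $t=0$, so $t\le|J_0|$ holds exactly. The paper works on $P$ itself and argues $t^{1/2}\lesssim|J|^{1/2}$ on $J$; your version is slightly cleaner.

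Two small corrections:
\begin{itemize}
\item For spatial neighbours, $J_1\cap J_2\in\{J_1,J_2\}$; for temporal neighbours, $K_1\cap K_2\in\{K_1,K_2\}$. Both follow from property~(i) (an intersection of positive $(d-1)$-measure is a full hyperface of one element), not from (ii). Comparability of the measures then comes from local quasi-uniformity~(v) and shape regularity~(iv).
\item ``The smaller of $K_1\cap K_2$'' should read ``the smaller of $K_1,K_2$, which equals $K_1\cap K_2$.''
\end{itemize}
Like the paper, you tacitly use that $\omega_h(P)$ is connected through hyperfaces.
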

\begin{proof}
We prove the assertion in three steps. In Steps 1--2, we show \eqref{eq:genparabolicpoincare} by combining Lemma~\ref{lem:parabolicpoincare} and the argumentation of \cite[Lemma 5.4]{dss25}. In Step 3, we derive \eqref{eq:genparabolicpoincareboundary}.

\textbf{Step 1:} Let $P\in\PP_h$ be arbitrary. By Proposition~\ref{prop:varphi}~(ii), the patch $\omega_h(P)$ contains a uniformly bounded number of shape-regular elements in $\PP_h$ of comparable size. Thus, there exists a uniformly bounded integer $L$ and an overlapping cover of prisms $\{P_1,\dots,P_L\}$ with
\begin{equation}\label{eq:defgenparabolicpoincare}
\omega_h(P)=\bigcup_{\ell=1}^L P_\ell
\qquad\text{and}\qquad
|P_j|\eqsim\Bigl|\bigcup_{\ell=1}^{j-1}(P_\ell\cap P_j)\Bigr|\quad \text{for all }j=2,\dots,L
\end{equation}
as well as
\begin{equation}\label{helper1genpp}
|J|\eqsim |J_\ell|\quad\text{and}\quad |K|\eqsim |\omega_\ell|\eqsim \diam(\omega_\ell)^{d-1} \quad\text{for all } P_\ell = J_\ell\times \omega_\ell \text{ and }\ell = 1,\ldots,L. 
\end{equation} 
More precisely, the existence of such a cover can be seen as follows. Let $ \{ \widetilde{P}_1,\ldots, \widetilde{P}_M \} $ be the coarsest conforming refinement of $\PP_h$ restricted to $\omega_h(P)$ into prisms (with simplicial base area). 
Our assumptions on $\PP_h$ guarantee that
\begin{equation*}
|J|\eqsim |\widehat{J}_\ell|\quad\text{and}\quad |K|\eqsim |\widehat{K}_\ell|\eqsim \diam(\widehat{K}_\ell)^{d-1} \quad\text{for all } \widehat{P}_\ell = \widehat{J}_\ell\times \widehat{K}_\ell \text{ and }\ell = 1,\ldots,M. 
\end{equation*} 
Let $\widehat{P}_{\pi_1},\widehat{P}_{\pi_2}, \widehat{P}_{\pi_3},\ldots,\widehat{P}_{\pi_L}$ be a path of neighboring prisms (in the sense that $\widehat P_{\pi_{\ell-1}} \cap \widehat P_{\pi_\ell}$ has $(d-1)$-dimensional measure larger than 0 for $\ell=2,\ldots,L$) such that each prism $\widehat{P}_1,..., \widehat{P}_M$ appears at least once. We define $P_1 := \widehat{P}_{\pi_1}$ and $P_\ell:=\widehat{P}_{\pi_{\ell-1}}\cup \widehat{P}_{\pi_\ell}$ for $\ell=2,\ldots,L$. This overlapping cover of prisms fulfills \eqref{eq:defgenparabolicpoincare}--\eqref{helper1genpp}. 

\textbf{Step 2:} 
Define $P^- := \bigcup_{\ell=1}^{L-1}P_\ell$. The best-approximation property of $v_{\omega_h(P)}$ and the definition of $P^-$ give that
\begin{equation}\label{eq:lemhelppoincare}
\|v-v_{\omega_h(P)}\|_{L^2(\omega_h(P))}^2
\leq \|v-v_{P^-}\|_{L^2(\omega_h(P))}^2
\leq \|v-v_{P^-}\|_{L^2(P^-)}^2 + \|v-v_{P^-}\|_{L^2(P_L)}^2.
\end{equation}
Since $v - v_{P_L}$ is $L^2(P_L)$-orthogonal to constants, the second summand can be rewritten as 
\begin{equation}\label{eq:lemhelp2poincare}
\|v-v_{P^-}\|_{L^2(P_L)}^2
= \|v-v_{P_L}\|_{L^2(P_L)}^2 + \|v_{P_L}-v_{P^-}\|_{L^2(P_L)}^2.
\end{equation}

Let $P_L^{\mathrm{int}}:=P^-\cap P_L$. With the Cauchy--Schwarz inequality and the equivalence of $|P_L|$ and $|P_L^{\mathrm{int}}|$ from \eqref{eq:defgenparabolicpoincare}, we get that
\begin{align*} \|v_{P^-}-v_{P_L}\|_{L^2(P_L)}^2&\leq 2\,\|v_{P_L^\text{int}}-v_{P_L}\|_{L^2(P_L)}^2+2\,\|v_{P_L^\text{int}}-v_{P^-}\|_{L^2(P_L)}^2\\
 &=2 \,|P_L|\biggl(\Bigl|\frac{1}{|P_L^\text{int}|} \int_{P_L^\text{int}}v(t, \x)-v_{P_L} \d\x \d t\Bigl|^2+\Bigl|\frac{1}{|P_L^\text{int}|} \int_{P_L^\text{int}}v(t, \x)-v_{P^-} \d\x\d t\Bigl|^2\biggr)\\
  &\leq 2 \,\frac{|P_L|}{|P_L^\text{int}|}\biggl( \int_{P_L^\text{int}}|v(t, \x)-v_{P_L}|^2 \d\x \d t+\int_{P_L^\text{int}}|v(t, \x)-v_{P^-}|^2 \d\x\d t \biggr)\\
  &\lesssim \|v-v_{P_L}\|^2_{L^2(P_L^\text{int})}+\|v-v_{P^-}\|^2_{L^2(P_L^\text{int})}\\
	  &\leq \|v-v_{P_L}\|^2_{L^2(P_L)}+\|v-v_{P^-}\|^2_{L^2(P^-)}. \end{align*} 
Combining this with \eqref{eq:lemhelppoincare}--\eqref{eq:lemhelp2poincare} yields that
\[
\|v-v_{\omega_h(P)}\|_{L^2(\omega_h(P))}^2
\lesssim \|v-v_{P_L}\|^2_{L^2(P_L)} + \|v-v_{P^-}\|^2_{L^2(P^-)}.
\]
Since $L$ is uniformly bounded, we inductively see that
\[
\|v-v_{\omega_h(P)}\|_{L^2(\omega_h(P))}^2
\lesssim \sum_{\ell=1}^L \|v-v_{P_\ell}\|_{L^2(P_\ell)}^2.
\]
Applying Lemma~\ref{lem:parabolicpoincare} to each $P_\ell = J_\ell\times \omega_\ell$ and using the equivalence \eqref{helper1genpp} yield \eqref{eq:genparabolicpoincare}.

\textbf{Step 3:} It remains to show \eqref{eq:genparabolicpoincareboundary}. Assume that $P\subseteq\omega_h(\z)$ for some $\z\in \NN_h\cap(\{0\}\times\Gamma)$.
From $L^2(\omega_h(P))$-orthogonality of $v-v_{\omega_h(P)}$ to constants, we first see that
\[
\|v\|_{L^2(\omega_h(P))}^2
= \|v-v_{\omega_h(P)}\|_{L^2(\omega_h(P))}^2 + \|v_{\omega_h(P)}\|_{L^2(\omega_h(P))}^2.
\]
The first summand is controlled by \eqref{eq:genparabolicpoincare}.
For the second summand, we use that the number of elements contained in $\omega_h(P)$ is uniformly bounded with
\begin{equation}\label{eq:lemhelp3poincare}
|J| \eqsim |J'|\quad\text{and}\quad |K|\eqsim |K'| \quad\text{for all} \quad P'=J'\times K'\in \PP_h \text{ with } P'\subseteq\omega_h(P),
\end{equation}
which is a simple consequence of Proposition~\ref{prop:varphi}~(ii), to see that
\begin{align*}
\|v_{\omega_h(P)}\|_{L^2(\omega_h(P))}^2 &= \frac{|\omega_h(P)|}{|P|}\|v_{\omega_h(P)}\|_{L^2(P)}^2
\lesssim \|v_{\omega_h(P)}\|_{L^2(P)}^2
\leq 2\|v-v_{\omega_h(P)}\|^2_{L^2(P)}+2\|v\|^2_{L^2(P)}.
\end{align*}
The first summand is again controlled by \eqref{eq:genparabolicpoincare}. For the remaining term $\|v\|^2_{L^2(P)}$, we introduce the weight $t^{-1/2}$. The equivalence \eqref{eq:lemhelp3poincare} and the fact that at least one element contained in $\omega_h(P)$ has nonempty intersection with $\{0\}\times \Gamma$ imply that $t^{1/2}\lesssim |J|^{1/2}$ for all $t\in J$, which leads to
\begin{align*}
\|v\|^2_{L^2(P)}
&= \int_{J} t^{1/2}\,t^{-1/2}\,\|v(t,\cdot)\|_{L^2(K)}^2\d t \\
& \lesssim  |J|^{1/2}\int_{J} t^{-1/2}\|v(t,\cdot)\|_{L^2(K)}^2\d t
\leq  |J|^{1/2}\|t^{-1/4}v\|_{L^2(\omega_h(P))}^2.
\end{align*}
 This concludes the proof.
\end{proof}

\subsection{Interpolation operator}\label{sec:interpolop}

The crucial ingredient in the proof of the {\sl a posteriori} error estimate \eqref{eq:intreliability} is a suitable interpolation operator $\mathcal{I}_h: H^{1/2,1/4}(\Sigma) \to \widetilde S^{1,1}(\PP_h)$. We introduce a Clément--type interpolation operator $\mathcal{I}_h\colon L^2(\Sigma)\to \widetilde S^{1,1}(\PP_h)$ by
\begin{equation}\label{Interpolationop}
\mathcal{I}_h v:= 
\sum_{\z\in\widetilde{\NN}_h}
\left( \frac{1}{|\omega_h(\z)|}\int_{\omega_h(\z)}v(t,\x)\d\x\d t \right)\varphi_{h,\z}.
\end{equation}

For the crucial local approximation property of Lemma~\ref{lem:interp-stability} below, we first need to show the following elementary properties of $\mathcal{I}_h$.
\begin{lemma}\label{lem:Interpolationop}
On all elements $P\in\PP_h$ that are away from $\{0\}\times \Gamma$ in the sense that $P \not\subseteq \omega_h(\z)$ for all $\z\in \NN_h\setminus\widetilde{\NN_h}$, the interpolation operator $\mathcal{I}_h$ preserves constants, i.e.,
\begin{equation}\label{eq:Ihproj}
(\mathcal{I}_h c)|_P = c \quad \text{for all } c\in\R.
\end{equation}
Moreover, $\mathcal{I}_h$ is locally $L^2$-stable, i.e., there exists a constant $C_{\rm stab}>0$ depending only on $C_{\rm shape}$, $C_{\rm lqu}^t$ and $C_{\rm lqu}^\x$ from \eqref{Cshape}--\eqref{Cquasiunif} such that
\begin{equation}\label{eq:Ihstab}
\|\mathcal{I}_h v\|_{L^2(P)} \leq C_{\rm stab}\,\|v\|_{L^2(\omega_h(P))}
\quad \text{for all } v\in L^2(\Sigma) \text{ and } P\in\PP_h.
\end{equation}
\end{lemma}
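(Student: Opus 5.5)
For the constant-preservation property \eqref{eq:Ihproj}, we plan to use the partition of unity \eqref{eq:partitionofunity} from Proposition~\ref{prop:varphi}~(iii). For $c\in\R$, the average of $c$ over each patch $\omega_h(\z)$ equals $c$, so $\mathcal{I}_h c = c\sum_{\z\in\widetilde\NN_h}\varphi_{h,\z}$. On an element $P$ with $P\not\subseteq\omega_h(\z)$ for all $\z\in\NN_h\setminus\widetilde\NN_h$, we will show that $\varphi_{h,\z}|_P=0$ for every such $\z$. The point to check is that $\omega_h(\z)=\supp(\varphi_{h,\z})$ is a union of closed elements, because $\varphi_{h,\z}$ is $\PP_h$-piecewise polynomial. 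If $\varphi_{h,\z}$ did not vanish identically on $P$, then $P$ (as a closed element) would be contained in its support. This contradicts the assumption. Hence on $P$ we get $\sum_{\z\in\widetilde\NN_h}\varphi_{h,\z}=\sum_{\z\in\NN_h}\varphi_{h,\z}=1$, which gives $(\mathcal{I}_h c)|_P=c$.

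For the local stability \eqref{eq:Ihstab}, we fix $P\in\PP_h$. Only nodes $\z\in\widetilde\NN_h$ with $\varphi_{h,\z}|_P\neq0$ contribute, and by the argument above these satisfy $P\subseteq\omega_h(\z)$, so $\omega_h(\z)\subseteq\omega_h(P)$ by \eqref{eq:patchomegaP}. By Proposition~\ref{prop:varphi}~(ii), their number is uniformly bounded. The triangle inequality then gives
\begin{equation*}
\|\mathcal{I}_h v\|_{L^2(P)}\le \sum_{\z:\,P\subseteq\omega_h(\z)} \frac{1}{|\omega_h(\z)|}\Bigl|\int_{\omega_h(\z)} v\d\x\d t\Bigr|\,\|\varphi_{h,\z}\|_{L^2(P)}.
\end{equation*}
Next we bound each factor. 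Cauchy--Schwarz gives $|\omega_h(\z)|^{-1}|\int_{\omega_h(\z)}v|\le|\omega_h(\z)|^{-1/2}\|v\|_{L^2(\omega_h(\z))}$. The nonnegativity from Proposition~\ref{prop:varphi}~(i) together with the partition of unity yields $0\le\varphi_{h,\z}\le1$, hence $\|\varphi_{h,\z}\|_{L^2(P)}\le|P|^{1/2}$. Combining these, each summand is at most $(|P|/|\omega_h(\z)|)^{1/2}\|v\|_{L^2(\omega_h(P))}$. Since $P\subseteq\omega_h(\z)$, we have $|P|\le|\omega_h(\z)|$, so the ratio is at most $1$ and we do not even need the comparability of element sizes here. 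Summing over the uniformly bounded number of nodes gives \eqref{eq:Ihstab}, with $C_{\rm stab}$ equal to that bound; this depends only on $C_{\rm shape}$, $C_{\rm lqu}^t$, and $C_{\rm lqu}^\x$.

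The only delicate step is justifying that $\varphi_{h,\z}\not\equiv0$ on $P$ implies $P\subseteq\omega_h(\z)$, and that $0\le\varphi_{h,\z}\le1$. Both follow from Proposition~\ref{prop:varphi}, from piecewise polynomiality (a polynomial that is nonzero on an element is nonzero on a dense subset of it, so the closed support contains the whole element), and from taking the support to be closed. The rest is routine.
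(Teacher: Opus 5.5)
Your proof is correct and follows the paper's argument. For constant preservation, you use the partition of unity together with the vanishing of $\varphi_{h,\z}$ on $P$ for $\z\in\NN_h\setminus\widetilde\NN_h$. For stability, you use the triangle inequality, $0\le\varphi_{h,\z}\le1$, Cauchy--Schwarz, $|P|\le|\omega_h(\z)|$, and the bounded number of patches from Proposition~\ref{prop:varphi}~(ii). Your extra argument that $\varphi_{h,\z}|_P\not\equiv0$ forces $P\subseteq\omega_h(\z)$ (closed support of a piecewise polynomial) spells out a step the paper leaves implicit.
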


\begin{proof}
For all $c\in\R$, the defintion of $\mathcal{I}_h$ and the partition of unity \eqref{eq:partitionofunity} give that
\[\mathcal{I}_h c = c \sum_{\z\in\widetilde{\NN}_h}\varphi_{h,\z} = c\, \Bigl(1-\sum_{\z\in\NN_h\setminus\widetilde{\NN_h}}\varphi_{h,\z}\Bigl),\]
which, together with the assumption on $P$, leads immediately to \eqref{eq:Ihproj}.

To show that $\mathcal{I}_h$ is locally $L^2$-stable, we apply the triangle inequality, the fact that  all $\varphi_{h,\z}$ take values in $[0,1]$ by Proposition~\ref{prop:varphi}~(i)+(iii), and the Cauchy--Schwarz inequality,
\begin{align*}
\|\mathcal{I}_h v\|_{L^2(P)}
&\leq \sum_{\z\in\widetilde{\NN}_h: P\subseteq \omega_h(\z)}
\frac{1}{|\omega_h(\z)|}\left|\int_{\omega_h(\z)}v(t,\x)\d t\d\x\right|\|\varphi_{h,\z}\|_{L^2(P)} \\
&\leq \sum_{\z\in\widetilde{\NN}_h: P\subseteq  \omega_h(\z)}
|\omega_h(\z)|^{-1/2} \|v\|_{L^2(\omega_h(\z))}\,|P|^{1/2}\\
&\leq \sum_{\z\in\widetilde{\NN}_h: P\subseteq  \omega_h(\z)}
\|v\|_{L^2(\omega_h(\z))}\lesssim \|v\|_{L^2(\omega_h(P))},
\end{align*}
where the last inequality follows from the fact that every $P\in\PP_h$ is included in a uniformly bounded number of patches $\omega_h(\z)$; see Proposition~\ref{prop:varphi}~(ii).
\end{proof}

For the following approximation property of $\mathcal{I}_h$, we additionally require local parabolic scaling of $\PP_h$ in the sense that there exists a \emph{uniform} constant $C_\text{par} \ge 1$ such that 
\begin{equation}\label{eq:localparabolicscaling}
	C_\text{par}^{-1} \diam(K)^2 \le |J| \le C_\text{par} \diam(K)^2 \quad \text{for all } J \times K \in \PP_h. 
\end{equation}
To ease notation, we introduce the local spatial mesh-size function $h_\x \in L^\infty(\Sigma)$, which is defined $\PP_h$-piecewise via 
\begin{equation}
	h_\x|_{J \times K} := \diam(K) \quad \text{for all } J \times K \in \PP_h.
\end{equation}

\begin{proposition}\label{lem:interp-stability}
There exists a constant $C_{\rm app}>0$ depending only on $C_{\rm shape}$, $C_{\rm lqu}^t$, and $C_{\rm lqu}^\x$ from \eqref{Cshape}--\eqref{Cquasiunif}, $C_{\rm par}$ from \eqref{eq:localparabolicscaling}, and on the final time $T$ such that
\begin{equation}\label{eq:interpolationieq}
\bigl\|h_\x^{-1/2}(v-\mathcal{I}_h v)\bigr\|_{L^2(\Sigma)}
\le C_{\rm app}\,\|v\|_{H^{1/2,1/4}(\Sigma)}
\qquad\text{for all } v\in H^{1/2,1/4}(\Sigma).
\end{equation}
\end{proposition}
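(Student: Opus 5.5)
We split the squared left-hand side of \eqref{eq:interpolationieq} elementwise,
\[
\bigl\|h_\x^{-1/2}(v-\mathcal{I}_h v)\bigr\|_{L^2(\Sigma)}^2=\sum_{P=J\times K\in\PP_h}\diam(K)^{-1}\|v-\mathcal{I}_h v\|_{L^2(P)}^2 .
\]
We bound each summand by local seminorms on the patch $\omega_h(P)$ and then sum. Before summing we use the finite overlap of the patches $\omega_h(P)$, which follows from Proposition~\ref{prop:varphi}~(ii), so that each point of $\Sigma$ lies in a uniformly bounded number of them. Since the localized seminorms contain the products $\chi_{\omega}(s,\x)\chi_\omega(s,\y)$ and $\chi_\omega(t,\x)\chi_\omega(s,\x)$, it follows that
\[
\sum_P |v|^2_{L^2H^{1/2}(\omega_h(P))}\lesssim |v|^2_{L^2(I;H^{1/2}(\Gamma))}
\quad\text{and}\quad
\sum_P |v|^2_{H^{1/4}L^2(\omega_h(P))}\lesssim |v|^2_{H^{1/4}(I;L^2(\Gamma))}.
\]
The parabolic scaling \eqref{eq:localparabolicscaling} gives $|J|^{1/2}\eqsim\diam(K)$. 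Hence multiplying the right-hand sides of Lemma~\ref{lem:genparabolicpoincare} by $\diam(K)^{-1}$ produces exactly these seminorms with uniformly bounded weights.

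\textbf{Case 1:} $P$ is away from $\{0\}\times\Gamma$, i.e.\ $P\not\subseteq\omega_h(\z)$ for all $\z\in\NN_h\setminus\widetilde\NN_h$. By \eqref{eq:Ihproj}, $\mathcal{I}_h$ reproduces the constant $c:=v_{\omega_h(P)}$ on $P$. Together with \eqref{eq:Ihstab} this gives
\[
\|v-\mathcal{I}_h v\|_{L^2(P)}=\|(v-c)-\mathcal{I}_h(v-c)\|_{L^2(P)}\le(1+C_{\rm stab})\|v-c\|_{L^2(\omega_h(P))}.
\]
Applying \eqref{eq:genparabolicpoincare} and dividing by $\diam(K)$ yields
\[
\diam(K)^{-1}\|v-\mathcal{I}_h v\|^2_{L^2(P)}\lesssim |v|^2_{L^2H^{1/2}(\omega_h(P))}+|v|^2_{H^{1/4}L^2(\omega_h(P))}.
\]

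\textbf{Case 2:} $P\subseteq\omega_h(\z)$ for some $\z\in\NN_h\cap(\{0\}\times\Gamma)$. Here constants are not reproduced, so we simply use the triangle inequality and \eqref{eq:Ihstab}:
\[
\|v-\mathcal{I}_h v\|_{L^2(P)}\lesssim\|v\|_{L^2(\omega_h(P))}.
\]
Then \eqref{eq:genparabolicpoincareboundary} applies. Compared with Case 1, it produces the additional term
\[
\diam(K)^{-1}|J|^{1/2}\,\|t^{-1/4}v\|^2_{L^2(\omega_h(P))}\eqsim\|t^{-1/4}v\|^2_{L^2(\omega_h(P))}.
\]
Summed over these $P$ with finite overlap, this is bounded by $\|t^{-1/4}v\|^2_{L^2(\Sigma)}$.

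\textbf{Main obstacle.} We must show
\[
\|t^{-1/4}v\|_{L^2(\Sigma)}\lesssim\|v\|_{H^{1/4}(I;L^2(\Gamma))}
\]
for all $v\in H^{1/2,1/4}(\Sigma)$, with no vanishing initial condition. This is a fractional Hardy inequality at the endpoint $t=0$. It holds for $H^{1/4}(I)$ precisely because $1/4<1/2$, so that $\widetilde H^{1/4}(I)=H^{1/4}(I)$ with equivalent norms. We intend to invoke the scalar inequality
\[
\int_0^T t^{-1/2}|w(t)|^2\,\mathrm{d}t\lesssim_T\|w\|^2_{H^{1/4}(0,T)}.
\]
One route is to extend $w$ to $H^{1/4}(\R)$ and use the fractional Hardy inequality (cf.\ \cite{mclean00}, Theorem 3.33 and the surrounding discussion). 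Applying it pointwise for a.e.\ $\x\in\Gamma$ and integrating over $\Gamma$ gives the vector-valued bound. The constant depends on $T$, which explains the $T$-dependence of $C_{\rm app}$.

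Combining both cases with the finite-overlap summation then yields \eqref{eq:interpolationieq}. The remaining checks are routine: the overlap bound for the localized seminorms, and that $\diam(K)\eqsim\diam(K')$ on the patch by \eqref{Cquasiunif}.
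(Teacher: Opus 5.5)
Your proposal is correct and follows the paper's proof essentially step by step. The paper uses the same case split according to whether $P\subseteq\omega_h(\z)$ for some $\z\in\NN_h\cap(\{0\}\times\Gamma)$. In the first case it combines constant reproduction, local $L^2$-stability and \eqref{eq:genparabolicpoincare}; in the second, local stability and \eqref{eq:genparabolicpoincareboundary}. It then uses parabolic scaling to normalize the weights and the finite overlap of the patches $\omega_h(P)$ to sum, and handles the weighted term exactly as you plan: the scalar Hardy-type inequality $\int_I t^{-1/2}w(t)^2\d t\lesssim\|w\|^2_{H^{1/4}(I)}$ from \cite[Lemma 3.31]{mclean00}, applied for a.e.\ $\x\in\Gamma$, which is also where the $T$-dependence of $C_{\rm app}$ comes from.
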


\begin{proof}
We prove the assertion in two steps. The first step is devoted to establishing a local approximation property for any 
$P = J\times K\in\PP_h$, and in the second step, we derive the global approximation property \eqref{eq:interpolationieq}.

\textbf{Step 1:} First, we assume that that $P$ is away from $\{0\}\times \Gamma$ in the sense that $P\not\subseteq \omega_h(\z)$ for all $\z\in\NN_h\cap(\{0\}\times\Gamma)$. Then, Lemma~\ref{lem:Interpolationop} guarantees that $(\mathcal{I}_h c)|_P=c$ for every constant $c\in\mathbb R$.
The triangle inequality, the local $L^2$-stability\eqref{eq:Ihstab}, and the Poincaré-type inequality \eqref{eq:genparabolicpoincare} show that
\begin{align*}
\|v-\mathcal{I}_h v\|^2_{L^2(P)}
&\lesssim \|v-v_{\omega_h(P)}\|^2_{L^2(P)} + \|\mathcal{I}_h(v_{\omega_h(P)}-v)\|^2_{L^2(P)}\\
&\lesssim \|v-v_{\omega_h(P)}\|^2_{L^2(\omega_h(P))}\\
&\lesssim \operatorname{diam}(K)\,|v|_{L^2H^{1/2}(\omega_h(P))}^2
+ |J|^{1/2}\,|v|_{H^{1/4}L^2(\omega_h(P))}^2.
\end{align*}

 We now assume that $P\subseteq \omega_h(\z)$ for some $\z\in\NN_h\cap(\{0\}\times\Gamma)$. The triangle
inequality, the local $L^2$-stability \eqref{eq:Ihstab}, and the local estimate
\eqref{eq:genparabolicpoincareboundary} show that
\[
\|v-\mathcal{I}_h v\|_{L^2(P)}^2\lesssim \| v \|_{L^2(\omega_h(P))}^2
\lesssim \operatorname{diam}(K)\,|v|_{L^2H^{1/2}(\omega_h(P))}^2
+ |J|^{1/2}\bigl(|v|_{H^{1/4}L^2(\omega_h(P))}^2+\|t^{-1/4}v\|_{L^2(\omega_h(P))}^2\bigr).
\]
 
With local parabolic scaling \eqref{eq:localparabolicscaling}, we obtain in either case the local bound
\begin{equation}\label{eq:interpolationhelper1}
\bigl\|h_\x^{-1/2}(v-\mathcal{I}_h v)\bigr\|_{L^2(P)}^2
\lesssim |v|_{L^2H^{1/2}(\omega_h(P))}^2 + |v|_{H^{1/4}L^2(\omega_h(P))}^2
+ \|t^{-1/4}v\|_{L^2(\omega_h(P))}^2.
\end{equation}

\textbf{Step 2:} To handle the weighted term in \eqref{eq:interpolationhelper1}, we use the following inequality from
\cite[Lemma 3.31]{mclean00}
\[
\int_I t^{-1/2} w(t)^2\d t \lesssim \|w\|_{H^{1/4}(I)}^2~~~~\text{for all } w\in H^{1/4}(I),
\]
where the hidden constant depends only on $T$. This gives that
\begin{equation}\label{eq:mclean-mod}
\|t^{-1/4}v\|_{L^2(\Sigma)}^2 \lesssim \|v\|_{H^{1/4}(I;L^2(\Gamma))}^2.
\end{equation}

Together with \eqref{eq:interpolationhelper1}, we conclude that
\begin{align*}\| h_\x^{-1/2} (v - \mathcal{I}_h v) \|_{L^2(\Sigma)}^2 
&= \sum_{P \in \PP_h} \| h_\x^{-1/2} (v - \mathcal{I}_h v) \|_{L^2(P)}^2\\
&\lesssim \sum_{P\in \PP_h} |v|_{L^2H^{1/2}(\omega_h(P))}^2 + |v|_{H^{1/4}L^2(\omega_h(P))}^2
+ \|t^{-1/4}v\|_{L^2(\omega_h(P))}^2\\
&\lesssim \| v \|_{H^{1/2,1/4}(\Sigma)}^2,
\end{align*} where the last inequality follows from the definition of the localized seminorms $|\cdot|_{L^2H^{1/2}(\omega_h(P))}$ and $|\cdot|_{H^{1/4}L^2(\omega_h(P))}$ and the uniformly bounded overlap of the patches $\omega_h(P)$, which is a consequence of Proposition~\ref{prop:varphi}~(ii).
\end{proof}

\subsection{A posteriori error estimate}\label{sec:aposterioriproof} 

With the preparations of the previous sections, we are finally in the position to prove our main result.
\begin{theorem}\label{thm:reliability}
For given $f \in L^2(\Sigma)$, e.g., as in \eqref{eq:direct} or \eqref{eq:indirect} with $\phi \in L^2(\Sigma)$ and $U_0 \in H^1_0(\Omega)$, let $u \in H^{1/2,1/4}(\Sigma)$ be the solution of \eqref{eq:interior} (which implies, as already mentioned in Section \ref{sec:bem}, the additional regularity $u \in \widetilde H^{1,1/2}(\Sigma)$). Then, for any prismatic mesh $\PP_h$ as in Section \ref{sec:meshes} with local parabolic scaling \eqref{eq:localparabolicscaling} and corresponding Galerkin approximation $u_h \in \widetilde S^{p_t,p_x}(\PP_h)$ of \eqref{eq:Galerkin}, it holds that
\begin{equation}\label{eq:relie}
\|u-u_h\|_{H^{1/2,1/4}(\Sigma)} \le C_{\mathrm{rel}} \,
\bigl\|h_{\x}^{1/2}(f-\W u_h)\bigr\|_{L^2(\Sigma)},
\end{equation}
where $C_{\mathrm{rel}}  = C_{\mathrm{app}} \, \|\W^{-1}\|$ with the constant $C_\mathrm{app}$ from Proposition \ref{lem:interp-stability} and  the operator norm
\[
\|\W^{-1}\|:=\sup_{v\in H^{-1/2,-1/4}(\Sigma)\setminus\{0\}}
\frac{\|\W^{-1}v\|_{H^{1/2,1/4}(\Sigma)}}{\|v\|_{H^{-1/2,-1/4}(\Sigma)}}.
\]
\end{theorem}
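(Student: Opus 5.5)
The argument is the standard duality route for weighted-residual estimators, with the Clément-type operator $\mathcal{I}_h$ from \eqref{Interpolationop} as the key tool.

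First, since $\W$ is an isomorphism, we have $u-u_h=\W^{-1}\W(u-u_h)$. Hence
\[
\|u-u_h\|_{H^{1/2,1/4}(\Sigma)} \le \|\W^{-1}\|\,\|\W(u-u_h)\|_{H^{-1/2,-1/4}(\Sigma)} = \|\W^{-1}\|\,\|f-\W u_h\|_{H^{-1/2,-1/4}(\Sigma)}.
\]
It therefore suffices to bound the dual norm of the residual $r_h:=f-\W u_h$ by $C_{\rm app}\|h_\x^{1/2}r_h\|_{L^2(\Sigma)}$. The residual lies in $L^2(\Sigma)$: $f\in L^2(\Sigma)$ by assumption, and $\W u_h\in L^2(\Sigma)$ because $u_h\in\widetilde S^{p_t,p_\x}(\PP_h)\subset\H^{1,1/2}(\Sigma)$ and $\W:\H^{1,1/2}(\Sigma)\to L^2(\Sigma)$ (Section~\ref{sec:integral equations}). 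Consequently the duality pairing $\dual{r_h}{v}_\Sigma$ is the $L^2(\Sigma)$ integral for every $v\in H^{1/2,1/4}(\Sigma)$.

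Next, fix $v\in H^{1/2,1/4}(\Sigma)$. Since $p_t,p_\x\ge1$, we have $\widetilde S^{1,1}(\PP_h)\subseteq\widetilde S^{p_t,p_\x}(\PP_h)=X_h$, so $\mathcal{I}_h v\in X_h$. Galerkin orthogonality \eqref{eq:orthogonality} then gives $\dual{r_h}{\mathcal{I}_h v}_\Sigma=0$, and therefore
\[
\dual{r_h}{v}_\Sigma=\int_\Sigma r_h\,(v-\mathcal{I}_h v)\d\x\d t=\int_\Sigma h_\x^{1/2}r_h\;h_\x^{-1/2}(v-\mathcal{I}_h v)\d\x\d t .
\]
Applying the Cauchy--Schwarz inequality and Proposition~\ref{lem:interp-stability} yields
\[
|\dual{r_h}{v}_\Sigma|\le\|h_\x^{1/2}r_h\|_{L^2(\Sigma)}\,C_{\rm app}\|v\|_{H^{1/2,1/4}(\Sigma)}.
\]
Taking the supremum over $v$ gives $\|r_h\|_{H^{-1/2,-1/4}(\Sigma)}\le C_{\rm app}\|h_\x^{1/2}r_h\|_{L^2(\Sigma)}$. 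Combined with the first step, this is \eqref{eq:relie} with $C_{\rm rel}=C_{\rm app}\|\W^{-1}\|$.

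All the substantive analysis sits in Proposition~\ref{lem:interp-stability}, which is already available, so nothing here should be a real obstacle. The one point needing care is justifying that the abstract duality pairing agrees with the $L^2$ integral. This follows from the embedding of $L^2(\Sigma)$ into $H^{-1/2,-1/4}(\Sigma)$ recalled in Section~\ref{sec:anisotropic spaces}, together with the regularity of $\W u_h$; once that is in place, splitting the integrand into the weighted product $h_\x^{1/2}r_h\cdot h_\x^{-1/2}(v-\mathcal{I}_h v)$ is legitimate because $h_\x$ is piecewise constant and positive.
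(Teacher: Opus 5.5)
Your proposal is correct and follows the paper's proof step by step. You bound the error by $\|\W^{-1}\|$ times the dual norm of the residual, use Galerkin orthogonality to subtract $\mathcal{I}_h v \in \widetilde S^{1,1}(\PP_h)\subseteq \widetilde S^{p_t,p_\x}(\PP_h)$, and conclude with the weighted Cauchy--Schwarz inequality and Proposition~\ref{lem:interp-stability}. Your added justification that $f-\W u_h\in L^2(\Sigma)$, so that the pairing is an $L^2$ integral, is only mentioned in passing in the paper (introduction and Section~\ref{sec:bem}), and it does not change the argument.
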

\begin{proof}
Using $\W u=f$ and the definition of $\|\W^{-1}\|$, we see that
\[
\|u-u_h\|_{H^{1/2,1/4}(\Sigma)}
\le \|\W^{-1}\|\,\|\W(u-u_h)\|_{H^{-1/2,-1/4}(\Sigma)}
= \|\W^{-1}\|\,\|f-\W u_h\|_{H^{-1/2,-1/4}(\Sigma)}.
\]
By duality, it holds that
\[
\|f-\W u_h\|_{H^{-1/2,-1/4}(\Sigma)}
= \sup_{v\in H^{1/2,1/4}(\Sigma)\setminus\{0\}}
\frac{\langle f-\W u_h, v\rangle_\Sigma}{\|v\|_{H^{1/2,1/4}(\Sigma)}}.
\]
Galerkin orthogonality \eqref{eq:orthogonality} implies that the duality
pairing vanishes on $\widetilde S^{1,1}(\PP_h)$ for $v \in \widetilde S^{1,1}(\mathcal{P}_h) \subseteq \widetilde S^{p_t,p_x}(\mathcal{P}_h)$. Hence, for arbitrary
$v\in H^{1/2,1/4}(\Sigma)$, we may replace $v$ by $v-\mathcal{I}_h v$
in the numerator. With the Cauchy--Schwarz inequality and Lemma~\ref{lem:interp-stability}, this leads to
\begin{align*}
\frac{\langle f-\W u_h, v\rangle_\Sigma}{\|v\|_{H^{1/2,1/4}(\Sigma)}}
&= \frac{\langle f-\W u_h , v-\mathcal{I}_h v\rangle_\Sigma}
{\|v\|_{H^{1/2,1/4}(\Sigma)}}\\
&= \frac{\langle h_\x^{1/2}(f-\W u_h) , h_\x^{-1/2}(v-\mathcal{I}_h v)\rangle_\Sigma}
{\|v\|_{H^{1/2,1/4}(\Sigma)}}\\
&\le \|h_\x^{1/2}(f-\W u_h)\|_{L^2(\Sigma)}
\frac{\|h_\x^{-1/2}(v-\mathcal{I}_h v)\|_{L^2(\Sigma)}}
{\|v\|_{H^{1/2,1/4}(\Sigma)}}\\
&\le C_{\mathrm{app}}\,\|h_\x^{1/2}(f-\W u_h)\|_{L^2(\Sigma)}.
\end{align*}
Taking the
supremum over $v$ yields \eqref{eq:relie}.
\end{proof}

\section{Numerical experiments}

In this section, we employ the error estimator
\begin{equation}\label{eq:estimator}
\eta_h^2:=\sum_{J\times K\in \PP_h} \eta_h(J\times K)^2 \quad \text{with} \quad \eta_h(J\times K):=\text{diam}(K)^{1/2}\|f-\W u_h\|_{L^2(J\times K)}
\end{equation}
within an adaptive algorithm and numerically investigate the resulting convergence rates.
 We restrict ourselves to the case $d=2$, with $\Gamma=\partial \Omega$ being the boundary of a polygonal domain in $\R^2$. 
 Details on the numerical computation of all involved (singular) integrals are given in Appendix~\ref{sec:numericalcomp} for $d\in\{2,3\}$. 
 
\subsection{Adaptive algorithm}\label{sec:adaptiveAlg}

The following adaptive algorithm is applied.
\begin{algorithm}\label{alg}
\textbf{Input:} Initial tensor mesh $\PP_0=\set{J\times K}{J\in \PP_{\overline{I}}, K \in \PP_\Gamma}$ corresponding to a mesh $\PP_{\overline{I}}$ of $\overline{I} = [0,T]$ and a mesh $\PP_\Gamma$ of $\Gamma$, polynomial degrees $p_t$ and $p_\x$, and adaptivity parameter $0<\theta\le1.$\\
\textbf{Loop:} For $\ell = 0,1,2,...$, iterate the following four steps {\rm (i)}--{\rm (iv)}:
\begin{itemize}
\item[{\rm (i)}] \textbf{Solve:} Compute the Galerkin approximation $u_\ell\in \widetilde{S}^{p_t,p_\x}(\mathcal{P}_\ell)$.
\item[{\rm (ii)}] \textbf{Estimate:} For every element $J\times K\in\mathcal{P}_\ell$, compute the local error indicator $\eta_\ell(J\times K)$.
\item[{\rm (iii)}]  \textbf{Mark:} Find a set $\mathcal{M}_\ell \subseteq \mathcal{P}_\ell$ of minimal cardinality such that 
\[\theta^2\eta_\ell\leq \sum_{J\times K\in\mathcal{M}_\ell}\eta_\ell(J\times K)^2.\]
\item[{\rm (iv)}]  \textbf{Refine:} Refine at least all elements in $\mathcal{M}_\ell$ by bisecting them once in space and twice in time (see Figure~\ref{fig:refine}) to obtain a new mesh $\mathcal{P}_{\ell+1}$.
\end{itemize}
\textbf{Output:} Refined prismatic meshes $\PP_\ell$, corresponding discrete solutions $u_\ell \in \widetilde S^{p_t,p_\x}(\PP_\ell)$, and error estimators $\eta_\ell$ for all $\ell \in \N_0$.
\end{algorithm}
\begin{figure}[H]
\centering

    \begin{tikzpicture}
        \draw[thick] (0,0) rectangle (3,3);

        \begin{scope}[shift={(6,0)}]
            \draw[thick] (0,0) rectangle (3,3);
            \foreach \x in {0.75,1.5, 2.25} {
                \draw (\x, 0) -- (\x, 3);
            }
            \draw (0,1.5) -- (3,1.5);
        \end{scope}

        \draw[->, thick] (4.2,1.5) -- (4.8,1.5);  
        \node at (1.5,-0.5) {$t$};
        \node at (7.5,-0.5) {$t$};
        \node at (-0.5,1.5) {$\x$};
        \node at (5.5,1.5) {$\x$};
    \end{tikzpicture}

    \caption{Parabolic refinement of a prismatic element $P=J\times K$, ensuring parabolic scaling \eqref{eq:localparabolicscaling}.}
    \label{fig:refine}
  
\end{figure}
To ensure reliability of the estimator, it is crucial that the refinement step (iv) ensures parabolic scaling \eqref{eq:localparabolicscaling} and uniformly preserves the mesh assumptions of Section \ref{sec:meshes}, which also requires refining non-marked elements. 
We start with an arbitrary tensor mesh $\PP_0$ and assign level 0 to every element in it. With each parabolic refinement (Figure~\ref{fig:refine}), the level of the new elements is increased by one. We choose $\PP_{\ell+1}$ in the refinement step (iv) as the minimal refinement of $\PP_\ell$ such that all elements in $\mathcal{M}_\ell$ are refined and such that the level difference between two elements sharing an edge is at most one.
It follows immediately that all conditions from Section~\ref{sec:meshes} are fulfilled. The constants $C_\text{lqu}^t$ and $C_\text{lqu}^\x$ from~\eqref{Cquasiunif} can be chosen as $C_\text{lqu}^t=16$ and $C_\text{lqu}^\x=4$.

\begin{remark}
In the case $d=3$, the bisection in space in the refinement step {\rm (iv)} can be replaced by newest vertex bisection. Similarly as in the case $d=2$, a minimal number of additional refinements, ensuring that the spatial triangulations at almost every $t \in I$ are conforming and that the level difference of two prisms sharing a face of the type $\{t\}\times K$ is bounded by one, guarantee the properties of Section~\ref{sec:meshes}. More details on such a refinement strategy (for meshes of the space-time cylinder $Q$ instead of the space-time boundary $\Sigma$) are found in \cite[Section 3]{mss26} and \cite[Section 4]{dss25}.
\end{remark}

 \subsection{Reference for exact error}
 
Since the exact error $\|u-u_h\|_{H^{1/2,1/4}(\Sigma)}$ is not available in practice, either because its computation is too expensive or because the exact solution is unknown, we use the following $(h-h/2)$-estimator $\zeta_h$ as a reference quantity instead.  For a prismatic mesh $\PP_h$, let $\widehat{\PP}_h$ denote the uniform isotropic refinement of $\PP_h$, i.e., $\widehat \PP_h$ results from $\PP_h$ by bisecting all elements once in space and once in time. Let $\widehat{u}_h$ be the corresponding Galerkin approximation of the exact solution $u$. The $(h-h/2)$-estimator is defined by
\begin{align}\label{eq:hhalbe}
\norm{u_h-\widehat u_h}{\W} := \Bigl(\langle \W(u_h-\widehat u_h),\, u_h-\widehat u_h\rangle_\Sigma\Bigr)^{1/2}.
\end{align}
Under the saturation assumption
\begin{equation}\label{eq:qsat}
\norm{u-\widehat u_h}{H^{1/2,1/4}(\Sigma)}
\le q_{\rm sat}\, \norm{u-u_h}{H^{1/2,1/4}(\Sigma)},
\qquad 0<q_{\rm sat}<1,
\end{equation}
the triangle inequality and boundedness and coercivity of $\W$ show that this estimator is equivalent to the error $\norm{u-u_h}{H^{1/2,1/4}(\Sigma)}$. Note that the saturation assumption is indeed satisfied under the realistic
(asymptotic) assumption that $\norm{u-u_h}{H^{1/2,1/4}(\Sigma)} = \mathcal O((\# \PP_h)^{-s})$ for some arbitrary rate $s > 0$.

If the exact solution $u$ is known, we additionally compute the $L^2$-error $\norm{u-u_h}{L^2(\Sigma)}$.

\subsection{Experiments}

In the following numerical experiments, we consider the heat equation \eqref{eq:interior} with end time $T=1$ on different two-dimensional domains $\Omega$ and given initial condition $U_0 \in H_0^1(\Omega)$ and Neumann datum $\phi \in L^2(\Sigma)$. We approximate the corresponding Dirichlet datum $u$, being the exact solution of the boundary integral equation \eqref{eq:direct}, by the adaptive Algorithm \ref{alg} with $\theta = 0.5$. 
Note that the estimator is indeed well-defined by the regularity of the data $U_0$ and $\phi$ and the approximations $u_\ell$; see Section \ref{sec:meshes}.
For comparison, we also consider $\theta = 1$, which results in uniform refinement. 
\begin{table}[H]
\centering
\begin{tabular}{c|c|c|c}
 & $p_\x = 1$ & $p_\x = 2$ & $p_\x\geq 3$ \\
\hline
$ \norm{u-u_\ell}{H^{1/2,1/4}(\Sigma)}$
& $\mathcal O(N_\ell^{-1/2})$ 
& $\mathcal O(N_\ell^{-5/6})$ 
& $\mathcal O(N_\ell^{-7/6})$ \\

$\norm{u-u_\ell}{L^2(\Sigma)}$ 
& $\mathcal O(N_\ell^{-2/3})$ 
& $\mathcal O(N_\ell^{-1})$ 
& $\mathcal O(N_\ell^{-4/3})$ \\
\end{tabular}
\caption{Expected convergence rates with respect to the number of degrees of freedom $N_\ell$ of the estimators and $L^2$-error for $p_t = 1$ and $p_\x \in \N$ in case of a smooth solution $u$.} 
\label{table:konvergenz}
\end{table}
In each example, we choose the initial mesh $\PP_0$ consisting of rectangular elements of size 0.5 in both space and time. The employed refinement strategy guarantees parabolically scaled meshes, i.e., $\sigma = 2$, which is required to guarantee reliability of the estimator; see Theorem~\ref{thm:reliability}. In case of smooth solutions $u$, Section~\ref{sec:bem} yields that for $p_t=1$, the optimal choice of $p_\x$ is 3; see also Table~\ref{table:konvergenz}.
We fix the temporal polynomial degree as $p_t:=1$ and consider the lowest-order choice $p_\x :=1$ and the optimal choice $p_\x:=3$ in space.  
	\subsubsection{Smooth problem on the square}
	
	We prescribe the exact solution 
\begin{align}\label{problem:cos}
U(t,\x):=1-e^{-4\pi^2 t}(\cos(2\pi x_1)+\cos(2\pi x_2))+e^{-8\pi^2 t}\cos(2\pi x_1)\cos(2\pi x_2)
\end{align}
on the square $\Omega := (0,1)^2$ and choose the initial condition $U_0$ and the Neumann datum $\phi$ accordingly.  It is noteworthy that the Dirichlet trace converges fast to $1$ as $t$ increases. In the numerical results, displayed in Figure \ref{fig:cos}, we observe optimal rates for both adaptive and uniform refinement. However, owing to the mentioned behavior of the solution $U$ (and its Dirichlet trace $u$), adaptive refinement achieves much better results. In each case, we see a close resemblance between the estimator $\eta_\ell =\norm{h_{\x}^{1/2}(f-\W u_\ell)}{L^2(\Sigma)}$ and the ($h-h/2$)-estimator $\norm{u_\ell - \widehat u_\ell}{\W}$, which numerically indicates both reliability and efficiency of $\eta_\ell$. Note that in this plot and in all following plots, $\norm{u_\ell-\widehat u_\ell}{\W}$ is not always computed over the same range as $\eta_\ell$, owing to its high computational cost.
\begin{figure}[H]
\centering
\begin{tikzpicture}
\begin{groupplot}[
  group style={
    group size=2 by 1,
    horizontal sep=1.5cm,
  },
  legend style={
    at={(0.03,0.03)},
    anchor=south west,
    font=\tiny,
    draw=none,
    fill=none
  },
  width=0.5\textwidth,
  height=0.5\textwidth,
  xmode=log,
  ymode=log,
  xmin=9, xmax=35000,
  ymin=4e-5, ymax=2, 
  xmajorgrids, ymajorgrids,
  xlabel={$N_\ell$},
  yticklabel pos=left,
  ytick pos=left,
  tick label style={font=\scriptsize},
]

\nextgroupplot[title={$(p_t, p_\x)=(1,1)$}]
\addplot[red, mark=*, thick]  coordinates {
(16, 0.68209086)
(43, 0.49589549)
(64, 0.29963421)
(82, 0.25847378)
(100, 0.19565523)
(151, 0.09379675)
(167, 0.06202878)
(200, 0.05095717)
(236, 0.03363399)
(284, 0.01954026)
(362, 0.01346055)
(603, 0.00774989)
(980, 0.00517599)
(1737, 0.00358374)
(2688, 0.00246733)
(4017, 0.00169892)
(8730, 0.00124065)
(15899, 0.00091145)
    	};
\addlegendentry{ $\eta_\ell$}

\addplot[dashed, red, mark=*, thick, forget plot]  coordinates {
(16, 0.68209086)
(128, 0.29960823)
(1024, 0.06222659)
(8192, 0.00676949)
    	};
\addplot[blue, mark=square, thick] coordinates {
(16, 0.21018880)
(43, 0.26378123)
(64, 0.21581949)
(82, 0.21615975)
(100, 0.19159422)
(151, 0.09349914)
(167, 0.05925595)
(200, 0.04982080)
(236, 0.03263717)
(284, 0.01765757)
(362, 0.01209657)
(603, 0.00701108)
(980, 0.00437409)
(1737, 0.00287982)
(2688, 0.00195408)
(4017, 0.00129137)
(8730, 0.00091201)
(15899, 0.00065009)
    	};
\addlegendentry{$\norm{u_\ell-\widehat u_\ell}{\W}$}

\addplot[dashed, blue, mark=square, thick, forget plot] coordinates {
(16, 0.21018880)
(128, 0.21577131)
(1024, 0.05936361)
(8192, 0.00607487)
    	};
\addplot[green, mark=x, thick] coordinates {
(16, 1.29916295)
(43, 0.90880922)
(64, 0.49216172)
(82, 0.41401367)
(100, 0.30364195)
(151, 0.10328191)
(167, 0.05727195)
(200, 0.03867474)
(236, 0.01896997)
(284, 0.00896993)
(362, 0.00580486)
(603, 0.00335412)
(980, 0.00171992)
(1737, 0.00076929)
(2688, 0.00045565)
(4017, 0.00027734)
(8730, 0.00014688)
(15899, 0.00010395)
	};
\addlegendentry{$\norm{u-u_\ell}{L^2(\Sigma)}$}

\addplot[dashed, green, mark=x, thick, forget plot] coordinates {
(16, 1.29916295)
(128, 0.49204006)
(1024, 0.05787860)
(8192, 0.00299124)
	};
\addplot[dashed, domain=500:15899, samples=200, thick]
{0.24*x^(-1/2)}
node[pos=0.15, above, font=\tiny, yshift=4pt]  {$\mathcal{O}(N_\ell^{-1/2})$};
\addplot[dashed, domain=500:15899, samples=200, thick]
{0.04*x^(-2/3)}
node[pos=0.4, below, font=\tiny, yshift=-8pt]  {$\mathcal{O}(N_\ell^{-2/3})$};

\nextgroupplot[title={$(p_t, p_\x)=(1,3)$}]
	\addplot[red, mark=*, thick]  coordinates {
(48, 0.68268107)
(132, 0.53172084)
(171, 0.40349994)
(207, 0.29807641)
(288, 0.25828213)
(360, 0.19754855)
(438, 0.12860291)
(480, 0.06166220)
(594, 0.05629058)
(702, 0.03953058)
(756, 0.02773530)
(900, 0.01491484)
(1149, 0.01155101)
(1869, 0.00697553)
(2658, 0.00448383)
(3633, 0.00314751)
(4806, 0.00229485)
(6369, 0.00138439)
(9540, 0.00095091)
    	};
\addlegendentry{ $\eta_\ell$}
	\addplot[dashed,red, mark=*, thick, forget plot]  coordinates {
(48, 0.68268107)
(384, 0.29939613)
(3072, 0.06163133)
(24576, 0.00576718)
    	};
\addplot[blue, mark=square, thick] coordinates {
(48, 0.21310357)
(132, 0.27452295)
(171, 0.25774511)
(207, 0.21599770)
(288, 0.20121130)
(360, 0.16935267)
(438, 0.11482054)
(480, 0.05902674)
(594, 0.04973477)
(702, 0.03424057)
(756, 0.02341279)
(900, 0.01324843)
(1149, 0.01072183)
(1869, 0.00641851)
(2658, 0.00406029)
(3633, 0.00273615)
(4806, 0.00196902)
(6369, 0.00122764)
(9540, 0.00084007)
    	};
\addlegendentry{$\norm{u_\ell-\widehat u_\ell}{\W}$}
\addplot[dashed, blue, mark=square, thick, forget plot] coordinates {
(48, 0.21310357)
(384, 0.21556008)
(3072, 0.05899820)
    	};    	
\addplot[green, mark=x,thick] coordinates {
(48, 1.30384451)
(132, 0.96566541)
(171, 0.69001093)
(207, 0.47617308)
(288, 0.35937472)
(360, 0.24490737)
(438, 0.13443356)
(480, 0.05788556)
(594, 0.03949648)
(702, 0.02089551)
(756, 0.01126620)
(900, 0.00866048)
(1149, 0.00543045)
(1869, 0.00294854)
(2658, 0.00176659)
(3633, 0.00129534)
(4806, 0.00069757)
(6369, 0.00040653)
(9540, 0.00028454)
};
\addlegendentry{$\norm{u-u_\ell}{L^2(\Sigma)}$}
\addplot[dashed, green, mark=x,thick, forget plot] coordinates {
(48, 1.30384451)
(384, 0.49093980)
(3072, 0.05777765)
(24576, 0.00295535)
};
\addplot[dashed, domain=1000:20000, samples=200, thick]
{60*x^(-7/6)}
node[pos=0.4, above, font=\tiny, xshift=14pt]
{$\mathcal{O}(N_\ell^{-7/6})$};
\addplot[dashed, domain=1000:20000, samples=200, thick]
{30*x^(-4/3)}
node[pos=0.3, below, font=\tiny, xshift=-14pt]
{$\mathcal{O}(N_\ell^{-4/3})$};

\end{groupplot}
\end{tikzpicture}
\caption{Error estimators and $L^2$-errors for the smooth problem with solution \eqref{problem:cos} on the square $\Omega=(0,1)^2$ over the number of degrees of freedom $N_\ell$. The solid lines represent adaptive parabolic refinement, while the dashed lines represent uniform parabolic refinement.}
\label{fig:cos}
\end{figure}
	\subsubsection{Almost singular problem on the square}
We next prescribe the exact solution as shifted heat kernel 
\begin{equation} \label{problem:heatkernel}
	U(t,\x) := G(t,\x-\z) 
\end{equation} with a fixed singularity $\z$ lying outside of $\overline\Omega$ with $\Omega = (0,1)^2$. The initial condition $U_0$ and the Neumann datum $\phi$ are chosen accordingly. Note that $U_0 = 0$. While the Dirichlet datum $u$ is smooth, depending on the choice of $\z$, significant differences between adaptive and uniform refinement can be observed. We examine the problem with the singularity $\z = (0.5,-0.25)$. In the numerical results, displayed in Figure~\ref{fig:hk}, we observe optimal rates for adaptive refinement and uniform refinement with $p_\x = 1$, while uniform refinement with $p_\x=3$ still seems to be in a pre-asymptotic regime. Again, the displayed curves indicate reliability and efficiency of the estimator $\eta_\ell$. The strong adaptive refinement towards the singularity $\z$ can be seen in Figure~\ref{fig:raumzeit-gitter}. The smallest elements are of the size $h_t = 2^{-11}$ and $h_\x= 2^{-6}$.
\begin{figure}[H]
\centering
\begin{tikzpicture}
\begin{groupplot}[
  group style={
    group size=2 by 1,
    horizontal sep=1.5cm,
  },
  legend style={
    at={(0.03,0.03)},
    anchor=south west,
    font=\tiny,
    draw=none,
    fill=none
  },
  width=0.5\textwidth,
  height=0.5\textwidth,
  xmode=log,
  ymode=log,
  xmin=9, xmax=75000,
  ymin=5e-5, ymax=0.5, 
  xmajorgrids, ymajorgrids,
  xlabel={$N_\ell$},
  yticklabel pos=left,
  ytick pos=left,
  tick label style={font=\scriptsize},
]

\nextgroupplot[title={$(p_t, p_\x)=(1,1)$}]
\addplot[red, mark=*, thick]  coordinates {
(16, 0.38392991)
(25, 0.25712474)
(34, 0.18209832)
(67, 0.10299016)
(103, 0.04980408)
(145, 0.03554675)
(201, 0.02365319)
(318, 0.01456484)
(547, 0.00866368)
(927, 0.00533149)
(1576, 0.00357682)
(2867, 0.00251542)
(5188, 0.00184429)
(9670, 0.00137746)
(18750, 0.00104602)
    	};
\addlegendentry{ $\eta_\ell$}

\addplot[dashed, red, mark=*, thick, forget plot]  coordinates {
(16, 0.38392991)
(128, 0.24986678)
(1024, 0.09653984)
(8192, 0.03018534)
(65536, 0.00348550)
    	};

\addplot[blue, mark=square, thick] coordinates {
(16, 0.05553894)
(25, 0.15289024)
(34, 0.18789011)
(67, 0.10547364)
(103, 0.04518959)
(145, 0.03638633)
(201, 0.02384827)
(318, 0.01377075)
(547, 0.00735756)
(927, 0.00442049)
(1576, 0.00284595)
(2867, 0.00188875)
(5188, 0.00132100)
(9670, 0.00093635)
    	};
\addlegendentry{$\norm{u_\ell-\widehat u_\ell}{\W}$}

\addplot[dashed, blue, mark=square, thick, forget plot] coordinates {
(16, 0.05553894)
(128, 0.14371257)
(1024, 0.12482582)
(8192, 0.03259556)
    	};

\addplot[green, mark=x, thick] coordinates {
(16, 0.36603064)
(25, 0.32951893)
(34, 0.24321804)
(67, 0.07691529)
(103, 0.02637869)
(145, 0.02302781)
(201, 0.01723118)
(318, 0.00909055)
(547, 0.00283812)
(927, 0.00134301)
(1576, 0.00070319)
(2867, 0.00039646)
(5188, 0.00023344)
(9670, 0.00013746)
(18750, 0.00008008)
	};
\addlegendentry{$\norm{u-u_\ell}{L^2(\Sigma)}$}

\addplot[dashed, green, mark=x, thick, forget plot] coordinates {
(16, 0.36603064)
(128, 0.28075646)
(1024, 0.07225686)
(8192, 0.01818744)
(65536, 0.00076225)
	};
\addplot[dashed, domain=500:27285, samples=200, thick]
{0.2*x^(-1/2)}
node[pos=0.5, above, font=\tiny, yshift=8pt]  {$\mathcal{O}(N_\ell^{-1/2})$};
\addplot[dashed, domain=500:27285, samples=200, thick]
{0.05*x^(-2/3)}
node[pos=0.3, below, font=\tiny, yshift=-8pt]  {$\mathcal{O}(N_\ell^{-2/3})$};

\nextgroupplot[title={$(p_t, p_\x)=(1,3)$}]
\addplot[red, mark=*, thick]  coordinates {
(48, 0.38459476)
(81, 0.25826810)
(114, 0.20336809)
(201, 0.13043715)
(255, 0.09692337)
(327, 0.05038598)
(453, 0.03744830)
(579, 0.02483111)
(726, 0.01863781)
(1023, 0.01220149)
(1539, 0.00673405)
(2391, 0.00396808)
(3219, 0.00266151)
(4644, 0.00171572)
(6510, 0.00115425)
(9465, 0.00078050)
(13911, 0.00053101)
    	};
\addlegendentry{ $\eta_\ell$}
\addplot[dashed, red, mark=*, thick, forget plot]  coordinates {
(48, 0.38459476)
(384, 0.24975856)
(3072, 0.09606207)
(24576, 0.02957136)
    	};
\addplot[blue, mark=square, thick] coordinates {
(48, 0.05576282)
(81, 0.15204158)
(114, 0.18236075)
(201, 0.12784996)
(255, 0.08174955)
(327, 0.04342870)
(453, 0.03609373)
(579, 0.02313102)
(726, 0.01816141)
(1023, 0.01158957)
(1539, 0.00599283)
(2391, 0.00365348)
(3219, 0.00239334)
(4644, 0.00155937)
(6510, 0.00100631)
(9465, 0.00064785)
    	};
\addlegendentry{$\norm{u_\ell-\widehat u_\ell}{\W}$}
\addplot[dashed, blue, mark=square, thick, forget plot] coordinates {
(48, 0.05576282)
(384, 0.14352626)
(3072, 0.12447568)
    	};
\addplot[green, mark=x,thick] coordinates {
(48, 0.36856109)
(81, 0.32694969)
(114, 0.23371921)
(201, 0.09201337)
(255, 0.05783275)
(327, 0.02617211)
(453, 0.02462414)
(579, 0.01980622)
(726, 0.01259602)
(1023, 0.00720335)
(1539, 0.00280029)
(2391, 0.00151991)
(3219, 0.00098282)
(4644, 0.00058043)
(6510, 0.00037559)
(9465, 0.00024791)
(13911, 0.00012978)
};
\addlegendentry{$\norm{u-u_\ell}{L^2(\Sigma)}$}
\addplot[dashed, green, mark=x,thick, forget plot] coordinates {
(48, 0.36856109)
(384, 0.28089703)
(3072, 0.07218230)
(24576, 0.01817997)
};
\addplot[dashed, domain=1000:24000, samples=200, thick]
{53*x^(-7/6)}
node[pos=0.5, above, font=\tiny, xshift=15pt]
{$\mathcal{O}(N_\ell^{-7/6})$};
\addplot[dashed, domain=1000:24000, samples=200, thick]
{25*x^(-4/3)}
node[pos=0.3, below, font=\tiny, xshift=-15pt]
{$\mathcal{O}(N_\ell^{-4/3})$};

\end{groupplot}
\end{tikzpicture}
\caption{Error estimators and $L^2$-errors for the smooth problem with solution \eqref{problem:heatkernel} and $\z=(0.5,-0.25)$ on the square $\Omega =(0,1)^2$ over the number of degrees of freedom $N_\ell$. The solid lines represent adaptive parabolic refinement, while the dashed lines represent uniform parabolic refinement.}
\label{fig:hk}
\end{figure}

\begin{figure}[htbp]
    \centering
    \includegraphics[width=0.5\textwidth]{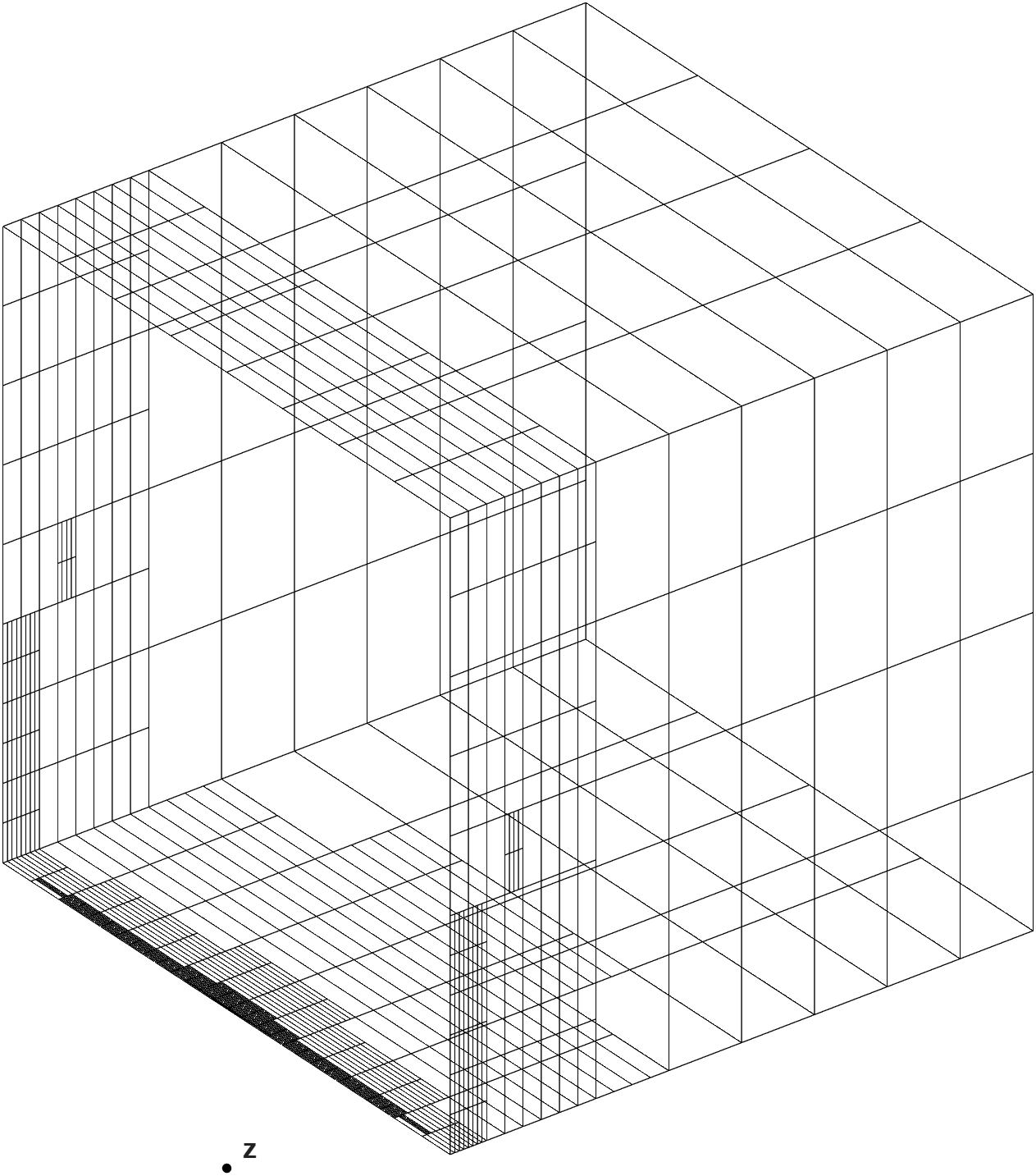}
    \caption{Adaptive mesh $\PP_9$ for the problem with solution~\eqref{problem:heatkernel} and $(p_t,p_\x)=(1,1)$. The mesh consists of $1024$ prisms with $N_9=927$ degrees of freedom. The singularity $\z$ is depicted in space and time relative to the mesh. }
    \label{fig:raumzeit-gitter}
\end{figure}

\subsubsection{Singular problem on the square}

We consider the heat equation \eqref{eq:interior} with given data
\begin{align}\label{problem:singular}
U_0(\x)=0\quad\text{and}\quad \phi(t,\x)=1
\end{align}
on the square $\Omega = (0,1)^2$.
Due to the incompatibility between $\phi$ and $U_0$ in the sense that $\phi(0,\x)  \neq \partial_\n U_0(\x)$, we expect the unknown solution $U$ (and its  Dirichlet trace $u$) to be singular at $t = 0$. In the numerical results, displayed in Figure \ref{fig:square}, we observe convergence rates of order $\mathcal O(N_\ell^{-1/2})$ for uniform refinement, which is suboptimal for $p_\x \neq 1$. In contrast, for $p_\x = 3$, adaptive refinement yields nearly optimal rates, 
namely approximately $\mathcal{O}(N_\ell^{-1})$, in contrast to
$\mathcal{O}(N_\ell^{-7/6})$ expected for smooth solutions.
\begin{figure}[H]
\centering
\begin{tikzpicture}
\begin{groupplot}[
  group style={
    group size=2 by 1,
    horizontal sep=1.5cm,
  },
  legend style={
    at={(0.03,0.03)},
    anchor=south west,
    font=\tiny,
    draw=none,
    fill=none
  },
  width=0.5\textwidth,
  height=0.5\textwidth,
  xmode=log,
  ymode=log,
  xmin=9, xmax=75000,
  ymin=6*1e-4, ymax=0.2, 
  xmajorgrids, ymajorgrids,
  xlabel={$N_\ell$},
  yticklabel pos=left,
  ytick pos=left,
  tick label style={font=\scriptsize},
]

\nextgroupplot[title={$(p_t, p_\x)=(1,1)$}]
\addplot[red, mark=*, thick]  coordinates {
(16, 0.16805487)
(46, 0.13384419)
(101, 0.08779803)
(146, 0.06376847)
(215, 0.04757749)
(571, 0.03786302)
(979, 0.02672983)
(1171, 0.01931286)
(2611, 0.01530933)
(5818, 0.01083177)
(7705, 0.00789047)
(10188, 0.00565068)
(34775, 0.00449432)
    	};
\addlegendentry{ $\eta_\ell$}
\addplot[dashed, red, mark=*, thick, forget plot] coordinates {
(16, 0.16805487)
(128, 0.07001447)
(1024, 0.02575517)
(8192, 0.00932416)
(65536, 0.00342777)
    	};
\addplot[blue, mark=square, thick] coordinates {
(16, 0.10800476)
(46, 0.09719139)
(101, 0.06345296)
(146, 0.04538091)
(215, 0.03463146)
(571, 0.02756358)
(979, 0.01896144)
(1171, 0.01422723)
(2611, 0.01091313)
(5818, 0.00794411)
(7705, 0.00579460)
(10188, 0.00407377)
    	};
\addlegendentry{$\norm{u_\ell-\widehat u_\ell}{\W}$}
\addplot[dashed, blue, mark=square, thick, forget plot] coordinates {
(16, 0.10800476)
(128, 0.04992192)
(1024, 0.01804172)
(8192, 0.00641649)
    	};
\addplot[dashed, domain=500:65000, samples=200, thick]
{1.3*x^(-1/2)}
node[pos=0.5, above, font=\tiny, yshift=10pt]  {$\mathcal{O}(N_\ell^{-1/2})$};

\nextgroupplot[title={$(p_t, p_\x)=(1,3)$}]
	\addplot[red, mark=*, thick]  coordinates {
(48, 0.13322149)
(132, 0.10082559)
(171, 0.08160504)
(207, 0.05739726)
(306, 0.04617677)
(396, 0.03427868)
(480, 0.02113466)
(708, 0.01801018)
(909, 0.01417663)
(1035, 0.00951298)
(1485, 0.00728509)
(2019, 0.00571176)
(2337, 0.00432772)
(3492, 0.00329625)
(5184, 0.00251844)
(6978, 0.00179879)
(12342, 0.00148197)
    	};
\addlegendentry{ $\eta_\ell$}

\addplot[dashed, red, mark=*, thick, forget plot] coordinates {	
(48, 0.13322149)
(384, 0.05680204)
(3072, 0.02079710)
(24576, 0.00746973)
    	};
\addplot[blue, mark=square, thick] coordinates {
(48, 0.07150008)
(132, 0.05937406)
(171, 0.05151141)
(207, 0.03899098)
(306, 0.02921781)
(396, 0.02192885)
(480, 0.01400797)
(708, 0.01166527)
(909, 0.00957810)
(1035, 0.00598778)
(1485, 0.00459763)
(2019, 0.00356283)
(2337, 0.00258841)
(3492, 0.00213103)
(5184, 0.00162093)
(6978, 0.00082062)
    	};
\addlegendentry{$\norm{u_\ell-\widehat u_\ell}{\W}$}
\addplot[dashed, blue, mark=square, thick, forget plot] coordinates {
(48, 0.07150008)
(384, 0.03896287)
(3072, 0.01399774)
    	};
\addplot[dashed, domain=500:24000, samples=200, thick]
{1.5*x^(-1/2)}
node[pos=0.5, above, font=\tiny, yshift=10pt] {$\mathcal{O}(N_\ell^{-1/2})$};
\addplot[dashed, domain=1000:7000, samples=200, thick]
{4.5*(x)^(-1)}
node[pos=0.4, below, font=\tiny, xshift=-14pt]
{$\mathcal{O}(N_\ell^{-1})$};

\end{groupplot}
\end{tikzpicture}
\caption{Error estimators for the singular problem with data \eqref{problem:singular} on the square $\Omega= (0,1)^2$ over the number of degrees of freedom $N_\ell$. The solid lines represent adaptive parabolic refinement, while the dashed lines represent uniform parabolic refinement.}
\label{fig:square}
\end{figure}

\subsubsection{Singular problem on the L-shape}
We consider the heat equation \eqref{eq:interior} with given data \eqref{problem:singular}
on the L-shaped domain $\Omega = (-1,1)^2\setminus [0,1]^2$.
Due to the incompatibility between $\phi$ and $U_0$ in the sense that $\phi(0,\x)  \neq \partial_\n U_0(\x)$, we expect the unknown solution $U$ (and its  Dirichlet trace $u$) to be singular at $t = 0$. 
In the numerical results, displayed in Figure \ref{fig:lshape}, we observe convergence rates of order $\mathcal O(N_\ell^{-1/2})$ for uniform refinement, which is suboptimal for $p_\x \neq 1$. 
In contrast, for $p_\x = 3$, adaptive refinement yields nearly optimal rates, 
namely approximately $\mathcal{O}(N_\ell^{-1})$, in contrast to 
$\mathcal{O}(N_\ell^{-7/6})$ expected for smooth solutions.

\begin{figure}[H]
\centering
\begin{tikzpicture}
\begin{groupplot}[
  group style={
    group size=2 by 1,
    horizontal sep=1.5cm,
  },
  legend style={
    at={(0.03,0.03)},
    anchor=south west,
    font=\tiny,
    draw=none,
    fill=none
  },
  width=0.5\textwidth,
  height=0.5\textwidth,
  xmode=log,
  ymode=log,
  xmin=25, xmax=22000,
  ymin=2.5e-3, ymax=0.3, 
  xmajorgrids, ymajorgrids,
  xlabel={$N_\ell$},
  yticklabel pos=left,
  ytick pos=left,
  tick label style={font=\scriptsize},
]

\nextgroupplot[title={$(p_t, p_\x)=(1,1)$}]
\addplot[red, mark=*, thick]  coordinates {
(32, 0.22326481)
(65, 0.18597068)
(117, 0.12060921)
(191, 0.09299208)
(307, 0.06993710)
(441, 0.04842723)
(911, 0.03686948)
(1332, 0.02712761)
(2242, 0.01854598)
(4103, 0.01420694)
(7619, 0.01072142)
(13517, 0.00753602)
(20445, 0.00576630)
    	};
\addlegendentry{ $\eta_\ell$}
\addplot[dashed, red, mark=*, thick, forget plot] coordinates {
(32, 0.22326481)
(256, 0.08967364)
(2048, 0.03293353)
(16384, 0.01196692)
    	};
\addplot[blue, mark=square, thick] coordinates {
(32, 0.15154755)
(65, 0.13688088)
(117, 0.08261737)
(191, 0.06517136)
(307, 0.04959320)
(441, 0.03459898)
(911, 0.02592335)
(1332, 0.01915052)
(2242, 0.01340915)
(4103, 0.01012794)
(7619, 0.00723153)
(13517, 0.00512112)
  	};
\addlegendentry{$\norm{u_\ell-\widehat u_\ell}{\W}$}
\addplot[dashed, blue, mark=square, thick, forget plot] coordinates {
(32, 0.15154755)
(256, 0.06184246)
(2048, 0.02250841)
(16384, 0.00805016)
  	};
\addplot[dashed, domain=500:20000, samples=200, thick]
{2.4*x^(-1/2)}
node[pos=0.5, above, font=\tiny, yshift=10pt]  {$\mathcal{O}(N_\ell^{-1/2})$};

\nextgroupplot[title={$(p_t, p_\x)=(1,3)$}]
	\addplot[red, mark=*, thick]   coordinates {
(96, 0.20369195)
(195, 0.15885415)
(267, 0.11556105)
(363, 0.08756373)
(549, 0.06893671)
(759, 0.05676627)
(900, 0.03852397)
(1140, 0.02858015)
(1650, 0.02280851)
(1971, 0.01712713)
(2394, 0.01211432)
(3777, 0.00935571)
(4683, 0.00692434)
(6798, 0.00520032)
(10680, 0.00377282)
    	};
\addlegendentry{ $\eta_\ell$}
\addplot[dashed, red, mark=*, thick, forget plot] coordinates {	
(96, 0.20369195)
(768, 0.08152562)
(6144, 0.02964808)
    	};
\addplot[blue, mark=square, thick] coordinates {
(96, 0.13121095)
(195, 0.10213380)
(267, 0.06534443)
(363, 0.05733027)
(549, 0.04317101)
(759, 0.03738809)
(900, 0.02578542)
(1140, 0.01870709)
(1650, 0.01534566)
(1971, 0.01172180)
(2394, 0.00833007)
(3777, 0.00645141)
(4683, 0.00493578)
(6798, 0.00337501)
    	};
\addlegendentry{$\norm{u_\ell-\widehat u_\ell}{\W}$}
\addplot[dashed, blue, mark=square, thick, forget plot] coordinates {	
(96, 0.13121095)
(768, 0.05453275)
(6144, 0.01971571)
    	};
\addplot[dashed, domain=500:6200, samples=200, thick]
{3*x^(-1/2)}
node[pos=0.5, above, font=\tiny, yshift=8pt] {$\mathcal{O}(N_\ell^{-1/2})$};
\addplot[dashed, thick, domain=900:6500, samples=200] {17*x^(-1)}
node[pos=0.5, below, font=\tiny, xshift=-14pt]
{$\mathcal{O}(N_\ell^{-1})$};
\end{groupplot}
\end{tikzpicture}
\caption{Error estimators for the singular problem with data \eqref{problem:singular} on the L-shape $\Omega= (-1,1)^2\setminus [0,1]^2$ over the number of degrees of freedom $N_\ell$.  The solid lines represent adaptive parabolic refinement, while the dashed lines represent uniform parabolic refinement.}
\label{fig:lshape}
\end{figure}

\appendix

\section{Proof of Proposition~\ref{prop:varphi}}\label{appendix:Prop}

In this section, we prove Proposition~\ref{prop:varphi}. 
We start with the following lemma.

\begin{lemma}\label{lem:partitionofunity}
 Let $v_h \in S^{1,1}(\PP_h)$. Then, $v_h(\z) = 0$ for all free nodes $\z \in \NN_h$ implies that $v_h = 0$.
\end{lemma}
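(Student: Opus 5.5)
The plan is to show that $v_h$ vanishes at every vertex of every element, including the hanging nodes $\NN_h^\perp$. On each prism $P=J\times K$, the restriction $v_h|_P$ lies in $\PP_1(J)\otimes\PP_1(K)$, and such a function is determined by its values at the $2d$ vertices of $P$. Once all vertices are known to carry zero values, $v_h|_P=0$ for every $P$ and hence $v_h=0$. The argument therefore reduces to propagating zero values from the free nodes to the hanging nodes.

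Take a hanging node $\z\in\NN_h^\perp$. By definition, there is an element $P'=J'\times K'$ with $\z\in P'$ such that $\z$ is not a vertex of $P'$. Mesh property (iii) gives spatial conformity for almost every $t$, and property (i) guarantees that intersections of neighbouring prisms with positive $(d-1)$-measure are full hyperfaces of one of the two prisms. Together these imply that $\z$ cannot lie spatially inside an edge of $K'$ without being a spatial vertex. Hence $\z=(t_\z,\x_\z)$, where $\x_\z$ is a vertex of $K'$ and $t_\z$ lies in the interior of $J'$. This is the only kind of hanging node: the temporal neighbours on one side of a time face are subdivided more finely in time.

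Continuity of $v_h$ then says that $v_h(\z)$ equals the linear interpolation in time of $v_h$ at $(\min J',\x_\z)$ and $(\max J',\x_\z)$, the two vertices of $P'$ on that vertical edge. I will order the hanging nodes by the length $|J'|$ of the coarsest interval whose interior contains them, and argue by induction from coarse to fine. The endpoints of $J'$ are either free nodes, with value zero, or hanging nodes lying inside a strictly longer time interval, with value zero by the induction hypothesis. Property (ii) ensures that the relevant coarse neighbour $J'$ is uniquely determined on each side, so the interpolation is well defined.

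The induction terminates because the mesh is finite and the time-interval lengths strictly increase along the chain. The coarsest level consists of vertices that belong to no interval interior, and those are free nodes. This termination and well-foundedness step is the main obstacle. The difficulty is to rule out a cyclic dependency in which a vertex is hanging for one prism while another prism's endpoint is hanging back. The strict increase of $|J'|$ along the chain handles exactly this. With all vertex values zero, the conclusion $v_h=0$ follows elementwise.
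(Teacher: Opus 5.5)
Your reduction is fine: $v_h|_P$ is determined by its vertex values, so it suffices to show that $v_h$ vanishes at the hanging nodes. The gap is the claim that every hanging node has the form $(t_\z,\x_\z)$ with $\x_\z$ a spatial vertex of $K'$ and $t_\z$ in the interior of $J'$. That is false under (i)--(iii).

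\emph{Spatial} hanging nodes are also allowed: these have $t_\z$ an endpoint of $J'$ and $\x_\z$ not a vertex of $K'$. Property (iii) holds only for almost every $t$, so it says nothing on the time interfaces $\{t\}\times K'$. Property (i) only requires $P\cap P'$ to be a hyperface of \emph{one} of the two prisms, which is exactly what permits a hanging node on the hyperface of the other. The paper's own refinement produces such nodes. If $J_1\times K$ is refined while its temporal neighbour $J_2\times K$ with $\min J_2=\max J_1$ is not, then $(\max J_1,\x_K)$, with $\x_K$ the midpoint of $K$, is a vertex of the children. It lies in the interior of the edge $\{\min J_2\}\times K$, and (i), (ii) and the level-one rule all hold. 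Similarly, the vertex of $P_3$ lying in the interior of $e_4$ in Figure~\ref{fig:prop2d1} is such a node. So your base case (``vertices in no interval interior are free'') fails, and the ordering by $|J'|$ never reaches these nodes.

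A repair must treat each hanging node as a convex combination of the vertices of the edge or face $F$ in whose interior it lies. The interpolation is affine on edges and triangles and bilinear on rectangles. You then need a well-founded argument that survives chains alternating between temporal and spatial dependencies. Size-based orderings are awkward here, because (i) relates $J'$ to $J''$ but gives no control on the spatial sizes.

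The paper sidesteps any explicit ordering:
\begin{enumerate}
\item[(a)] Fix a linear functional $L$ that is injective on all vertices.
\item[(b)] Among the hanging nodes, take $\z^*$ maximizing $|v_h|$, with value $M>0$, and among those maximize $L$.
\item[(c)] The convex combination forces $|v_h|=M$ at all vertices of $F$. One of them has $L$ strictly larger than $L(\z^*)$, and it must be hanging because free nodes carry the value $0$. This contradicts the choice of $\z^*$.
\end{enumerate}
A plain induction along increasing $L$ would not suffice, since only one vertex of $F$ is guaranteed to have larger $L$. It is the maximum-principle step that makes the ordering usable.
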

\begin{proof} 
Since  the set of all nodes $\NN_h \cup \NN_h^\perp$ is finite, there exists a linear functional $L\colon \mathbb R^{d} \to \mathbb R$
which is injective on the set of vertices. Indeed, $L$ can be chosen as $L(\x):=\x\cdot \mathbf{v}$ for any 
$\mathbf{v}\in\R^d$ that does not lie in the orthogonal complement of any line $\text{span}\{\z - \z'\}$, $\z,\z' \in \NN_h \cup \NN_h^\perp$. 
Assume, by contradiction, that there exists a hanging node $\z$ such that $v_h(\z)\neq 0$, i.e.,
\[M := \max_{\z\in \NN_h^\perp} |v_h(\z)| >0 .\]
Choose $\z^*\in  \NN_h^\perp$ such that $|v_h(\z^*)| = M$
and, among all nodes with this property, such that $L(\z^*)$ is maximal.

By definition of a hanging node, there exists a prism $P\in \PP_h$
such that $\z^* \in P$, but $\z^*$ is not a vertex of $P$.
Let $F$ be the edge or face of $P$ such that $\z^*$ lies in the
interior of $F$. If $F$ is a face, then it is either a triangle or a rectangle.
Let $\z_1,\dots,\z_m$ be the vertices of $F$. Since
$\z^*$ lies in the interior of $F$, there exist nonnegative weights
$w_1,\dots,w_m <1$ with
$\sum_{i=1}^m w_i = 1$ such that $\z^* = \sum_{i=1}^m w_i \z_i$.

Moreover, since $v_h|_P$ is a polynomial of degree one in space and time, the
restriction of $v_h$ to $F$ is given by interpolation of its values at the
vertices of $F$. More precisely, it is affine on edges and triangular faces, and bi-affine on rectangular faces. Therefore,
\[ v_h(\z^*) = \sum_{i=1}^m w_i v_h(\z_i).\]
 By the maximality of $M$, we have that
\[    M=|v_h(\z^*)|\leq\sum_{i=1}^m w_i|v_h(\z_i)|
\leq \sum_{i=1}^m w_i M = M.\]
It follows that $|v_h( \z_i)| = M$ for all $i\in \{1,\ldots,m\}$.

On the other hand, applying the linear functional $L$ to the convex representation of
$\z^*$, we obtain that
\[L(\z^*) = \sum_{i=1}^m w_i L(\z_i).\]
Since $L$ is injective on $\NN_h \cup \NN_h^\perp$, there exists a unique $j = \argmax_{i=1,\dots,m} L(\z_i)$.
Since the weights are nonnegative, and $\z^*$ is not one of the vertices
$\z_i$, this yields that $L(\z_j) > L(\z^*)$.

In particular, $|v_h(\z_j)| = M$ and $L(\z_j) > L(\z^*)$.
This contradicts the choice of $\z^*$ as a node with maximal value of
$L$ among all nodes at which $|v_h|$ attains the value $M$.
Hence our assumption was false, and therefore  $v_h(\z) = 0$ for all vertices $\z$. This concludes the proof of the lemma.
\end{proof}

\begin{proof}[Proof of Proposition~\ref{prop:varphi}]
We only consider the case $d=2$. The case $d=3$ can be proven similarly but is more technical. 
We also mention that the proof simplifies for both $d=2$ and $d=3$ if only meshes generated by the refinement strategy from Section~\ref{sec:adaptiveAlg} are considered.
The proof is split into two steps. 

\textbf{Step 1:} For every $\z\in \NN_h$, we recursively define the patch $\pi_h^i(\z)$ for $i \in \N$ by
\[\pi^1_h(\z):= \bigcup \set{P\in \PP_h}{\z\in P},\quad \text{and}\quad\pi^i_h(\z):=\bigcup \set{P\in \PP_h}{P\cap \pi^{i-1}_h(\z)\neq \emptyset}\quad\text{for } i\ge 2.
\]
In the following, we construct a suitable $\varphi_{h,\z}$ and show the inclusion
\begin{align}\label{eq:prophelp1}
\omega_h(\z):=\supp(\varphi_{h,\z}) \subseteq \pi_{h}^3(\z)
\end{align}
for all $\z \in \NN_h$.
Together with Lemma~\ref{lem:partitionofunity}, this immediately proves (i)--(iii).

\textbf{Step 2:} We only consider interior nodes $\z \in \NN_h$, i.e., $\z \in (0,T) \times \Gamma$. Boundary nodes can be treated analogously.
 We denote by $P_1, P_2, P_3, P_4\in \PP_h$  the prisms adjacent to $\z$, i.e., the prisms in the patch $\Pi^1_{h}(\z)$.
Let $e_1, e_2, e_3, e_4$ be the longest edges of the prisms adjacent to $\z$ in the sense that for all $i,j\in\{1,2,3,4\}$ either $e_i\cap P_j=\z$ or $e_i\cap P_j$ is an edge of $P_j$.  
Let $e_1$ and $e_2$ be edges of type $J\times \{\x\}$ and $e_3$ and $e_4$ edges of type $\{t\}\times K$. 
Define $\omega_h'(\z)$ as the union of all prisms $P \in \mathcal{P}_h$ such that $P \cap e_i$ is an edge of $P$ for at least one $i \in \{1,2,3,4\}$; see Figure~\ref{fig:prop2d1} for an example. By construction, it follows immediately that $\omega_h'(\z)\subseteq \pi^2_h(\z)$.
\begin{figure}[H]
\begin{tikzpicture}
    \node at (-2.75,0) {$\x$};
    \node at (0,-2.5) {$t$};
    \draw[thick] (0,0) rectangle (3,2);
    \draw[thick] (-2,0) rectangle (0,2);
    \draw[thick] (-2,-2) rectangle (-1,0);
    \draw[thick] (-1,-1) rectangle (0,0);
    \draw[thick] (0,-1.5) rectangle (0.5,0);
    \draw[thick] (0.5,-1) rectangle (1,0);
    \draw[thick] (1,-0.5) rectangle (1.5,0);
    \draw[thick] (1.5,-0.5) rectangle (2,0);  
    \draw[thick] (2,-1.25) rectangle (2.5,0);
    \draw[thick] (2.5,-0.5) rectangle (3,0); 

    \draw[ultra thick,blue!65!black] (0,0) -- (0,2)
        node[pos=0.55,right] {$e_3$};
    \draw[ultra thick,blue!65!black] (0,0) -- (3,0)
        node[pos=0.55,above] {$e_2$};
    \draw[ultra thick,blue!65!black] (0,0) -- (-2,0)
        node[pos=0.55,above] {$e_1$};
    \draw[ultra thick,blue!65!black] (0,0) -- (0,-1.5)
        node[pos=0.55,right, inner sep=0.5pt] {$e_4$};
        
	\draw[ultra thick,red!70!black] (-1,0) -- (-1,-2)
    		node[pos=0.55,left] {$e_5$};
	\draw[ultra thick,red!70!black] (0.5,0) -- (0.5,-1.5)
    		node[pos=0.55,right, inner sep=0.5pt] {$e_6$};
	\draw[ultra thick,red!70!black] (1,0) -- (1,-1)
    		node[pos=0.28,right, inner sep=0.5pt] {$e_7$};
	\draw[ultra thick,red!70!black] (1.5,0) -- (1.5,-0.5)
    		node[pos=0.55,right, inner sep=0.5pt] {$e_8$};
	\draw[ultra thick,red!70!black] (2,0) -- (2,-1.25)
    		node[pos=0.55,right, inner sep=0.5pt] {$e_9$};
	\draw[ultra thick,red!70!black] (2.5,0) -- (2.5,-1.25)
    		node[pos=0.55,right, inner sep=0.5pt] {$e_{10}$};

    \fill (0,0) circle (2.5pt);

    \node[above right] at (0,0) {$\z$};
    \node at (-1,1) {$P_1$};
    \node at (1.5,1) {$P_2$};
    \node at (-0.5,-0.5) {$P_3$};
    \node at (0.25,-0.25) {$P_4$};
\end{tikzpicture}
\caption{Example of $\omega_h'(\z)$ for $d=2$.}
\label{fig:prop2d1}
\end{figure}
Moreover, for the edges $e_3$ and $e_4$, all prisms in $\omega_h'(\z)$ lying on the same side of $e_3$ or $e_4$ have the same (temporal) length by Assumption~(ii) of Section~\ref{sec:meshes} and entirely lie on $e_3$ or $e_4$ by Assumption~(i) of Section~\ref{sec:meshes}. Prisms on one side of $e_1$ or $e_2$ do not necessarily have the same (spatial) length. We define $e_5,\ldots, e_m$ as the longest edges with one endpoint in the interior of $e_1$ or $e_2$. All hanging nodes (with respect to the restriction to $\omega_h'(\z)$) on the boundary of $\omega_h'(\z)$ lie in the interior of an edge $e_i$ for $i\in \{3, 4, 5,\dots, m\}$ by the construction of $\omega_h'(\z)$ and Assumptions~(i)--(ii) of Section~\ref{sec:meshes}.  Define $\omega_h(\z)$ as the union of $\omega_h'(\z)$ and all prisms $P \in \mathcal{P}_h$ such that $P \cap e_i$ is an edge of $P$ for at least one $i \in \{3, 4, 5,\ldots, m\}$; see Figure~\ref{fig:prop2d2} for an example. By construction, it follows immediately that $\omega_h(\z)\subseteq \pi^3_h(\z)$.

 If we restrict $\PP_h$ to the prisms in $\omega_h(\z)$, all hanging nodes (with respect to the restriction to $\omega_h(\z)$) on the boundary of $\omega_h(\z)$ lie in the interior of an edge of the type $\{t'\}\times K$ by the construction of $\omega_h(\z)$ and Assumptions~(i)--(ii) of Section~\ref{sec:meshes}. The only way hanging nodes can occur on the boundary of $\omega_h(\z)$ is the configuration shown in Figure~\ref{fig:prop2d3}: two prisms in $\omega_h'(\z)$ with spatial parts $K_1, K_2$ lie next to each other and between two prisms with different spatial parts $K_3$ and $K_4$, such that $K_i\subsetneq K_3$ and $K_i\subsetneq K_4$ for $i\in \{1,2\}$. Here, either $K_1 = K_2$ or $K_1\neq K_2$ may occur, as shown in Figure~\ref{fig:prop2d3}. The extension to $\omega_h(\z)$ may then lead to new hanging nodes on the boundary of $\omega_h(\z)$, as shown on the right in Figure~\ref{fig:prop2d3}.
\begin{figure}[H]
\centering
\begin{tikzpicture}
    \node at (-2.75,0) {$\x$};
    \node at (0,-2.5) {$t$};
    \draw[thick] (0,0) rectangle (3,2);
    \draw[thick] (-2,0) rectangle (0,2);
    \draw[thick] (-2,-2) rectangle (-1,0);
    \draw[thick] (-1,-1) rectangle (0,0);
    \draw[thick] (0,-1.5) rectangle (0.5,0);
    \draw[thick] (0.5,-1) rectangle (1,0);
    \draw[thick] (1,-0.5) rectangle (1.5,0);
    \draw[thick] (1.5,-0.5) rectangle (2,0);  
    \draw[thick] (2,-1.25) rectangle (2.5,0);
    \draw[thick] (2.5,-0.5) rectangle (3,0);

    \draw[thick] (-1,-1.5) rectangle (0,-1);
    \draw[thick] (-1,-2) rectangle (0,-1.5);    
    \draw[thick] (1,-1.5) rectangle (0.5,-1);  
    \draw[thick] (1.5,-1) rectangle (1,-0.75);  
    \draw[thick] (1.5,-0.75) rectangle (1,-0.5);    
    \draw[thick] (1.5,-0.75) rectangle (2,-0.5);   
    \draw[thick] (1.5,-1.25) rectangle (2,-0.75); 
    \draw[thick] (2.5,-1) rectangle (3,-0.5);    
    \draw[thick] (2.5,-1.25) rectangle (3,-1);
    \fill (0,0) circle (2.5pt);
    
    \draw[ultra thick,blue!65!black] (0,0) -- (0,2)
        node[pos=0.55,right] {$e_3$};
    \draw[ultra thick,blue!65!black] (0,0) -- (0,-1.5)
        node[pos=0.55,right, inner sep=0.5pt] {$e_4$};
        
	\draw[ultra thick,red!70!black] (-1,0) -- (-1,-2)
    		node[pos=0.55,left] {$e_5$};
	\draw[ultra thick,red!70!black] (0.5,0) -- (0.5,-1.5)
    		node[pos=0.55,right, inner sep=0.5pt] {$e_6$};
	\draw[ultra thick,red!70!black] (1,0) -- (1,-1)
    		node[pos=0.28,right, inner sep=0.5pt] {$e_7$};
	\draw[ultra thick,red!70!black] (1.5,0) -- (1.5,-0.5)
    		node[pos=0.55,right, inner sep=0.5pt] {$e_8$};
	\draw[ultra thick,red!70!black] (2,0) -- (2,-1.25)
    		node[pos=0.55,right, inner sep=0.5pt] {$e_9$};
	\draw[ultra thick,red!70!black] (2.5,0) -- (2.5,-1.25)
    		node[pos=0.55,right, inner sep=0.5pt] {$e_{10}$};

    \node[above right] at (0,0) {$\z$};
\end{tikzpicture}
\caption{Example of $\omega_h(\z)$ originating from $\omega_h'(\z)$ in Figure~\ref{fig:prop2d1} for $d=2$.}
\label{fig:prop2d2}
\end{figure}

\begin{figure}[H]
\centering
\begin{minipage}{0.45\textwidth}
\centering
\begin{tikzpicture}
    \draw[thick] (0,-1) rectangle (0.5,0);
    \draw[thick] (0.5,-0.75) rectangle (1,0);
    \draw[thick] (1,-0.5) rectangle (1.5,0);  
    \draw[thick] (1.5,-1.25) rectangle (2,0);

    \fill (0.5,-1) circle (2.5pt);    
    \fill (1,-0.75) circle (2.5pt);
    \fill (1.5,-1.25) circle (2.5pt);
\end{tikzpicture}
\end{minipage}
\vspace{4mm}
\begin{minipage}{0.45\textwidth}
\centering
\begin{tikzpicture}
    \draw[thick] (0,-1) rectangle (0.5,0);
    \draw[thick] (0.5,-0.75) rectangle (1,0);
    \draw[thick] (1,-0.5) rectangle (1.5,0);  
    \draw[thick] (1.5,-1.25) rectangle (2,0);
    
    \draw[thick] (0.5,-1) rectangle (1,-0.75);
    \draw[thick] (1,-0.75) rectangle (1.5,-0.5);
    \draw[thick] (1,-1.25) rectangle (1.5,-0.75);
    
\end{tikzpicture}
\end{minipage}
\begin{minipage}{0.45\textwidth}
\centering
\begin{tikzpicture}
    \draw[thick] (0,-1) rectangle (0.5,0);
    \draw[thick] (0.5,-0.5) rectangle (1,0);
    \draw[thick] (1,-0.5) rectangle (1.5,0);  
    \draw[thick] (1.5,-1.25) rectangle (2,0);
    
    \fill (0.5,-1) circle (2.5pt);    
    \fill (1,-0.5) circle (2.5pt);
    \fill (1.5,-1.25) circle (2.5pt);
\end{tikzpicture}
\end{minipage}
\begin{minipage}{0.45\textwidth}
\centering
\begin{tikzpicture}
    \draw[thick] (0,-1) rectangle (0.5,0);
    \draw[thick] (0.5,-0.5) rectangle (1,0);
    \draw[thick] (1,-0.5) rectangle (1.5,0);  
    \draw[thick] (1.5,-1.25) rectangle (2,0);
    
    \draw[thick] (0.5,-0.75) rectangle (1,-0.5);
    \draw[thick] (0.5,-1) rectangle (1,-0.75);
    \draw[thick] (1,-0.75) rectangle (1.5,-0.5);
    \draw[thick] (1,-1.25) rectangle (1.5,-0.75);
    
\end{tikzpicture}
\end{minipage}
\caption{Local configuration illustrating how new hanging nodes can arise when extending $\omega_h'(\z)$ (left) to $\omega_h(\z)$ (right) for $d=2$.}
\label{fig:prop2d3}
\end{figure}

 Since any polynomial of degree one in time and space on a prism $P = J\times K$ is uniquely determined by its values at the vertices of $P$, we may define $\varphi_{h,\mathbf{z}} \in S^{1,1}(\mathcal{P}_h)$ iteratively as follows:

\begin{itemize}
\item Let $\z'\in \omega_h'(\z)$ be a vertex of a prism of $\PP_h$. If $\z'\in e_i$ for some $i\in\{1,2,3,4\}$, then  $\varphi_{h,\z}(\z')$ is obtained by linear interpolation of $\varphi_{h,\z}(\z)= 1$ and $\varphi_{h,\z}(\z_i)= 0$ with the other endpoint $\z_i\neq \z$ of $e_i$.  If $\z'\in e_i = \text{conv}\{\z_i,\z_i'\}$ for $i\in\{5,\ldots, m\}$ with $\z_i'$ lying on $e_1$ or $e_2$, then  $\varphi_{h,\z}(\z')$ is obtained by linear interpolation of the value of $\varphi_{h,\z}(\z_i')$  and $\varphi_{h,\z}(\z_i)= 0$.
In the configuration of Figure~\ref{fig:prop2d3}, the $\z_i$ are highlighted by a dot.
 Otherwise, choose $\varphi_{h,\z}(\z')=0$.
\item Let $\z'\in \omega_h(\z) \setminus \omega_h'(\z)$ be a vertex of a prism of $\PP_h$. If $\z'$ is not a hanging node (with respect to the restriction to $\omega_h(\z)$) on the boundary of $\omega_h(\z)$, then $\varphi_{h,\z}(\z') = 0$. 
Since $\varphi_{h,\z}(\z_i) = 0$, we can also choose $\varphi_{h,\z}(\z') = 0$ if $\z'$ is a hanging node on the boundary of $\omega_h(\z)$, as $\varphi_{h,\z}$ on newly introduced outer edges from $\omega_h'(\z)$ to $\omega_h(\z)$ may be chosen as zero; cf. Figure~\ref{fig:prop2d3}.
\end{itemize}
This proves \eqref{eq:prophelp1}. Moreover, the constructed function $\varphi_{h,\z}$ is nonnegative at all vertices of $\PP_h$. Since it is multilinear on each prism and thus also on $\Sigma$, this implies $\varphi_{h,\z}(t,\x)\ge 0$ for all $(t,\x)\in \Sigma$.

\end{proof}

\section{Numerical computation}\label{sec:numericalcomp}

This section describes the implementation of the Galerkin system \eqref{eq:Galerkin} with $f$ given as in \eqref{eq:direct} and of the estimator \eqref{eq:estimator} for $d\in\{2,3\}$. Throughout, let $\PP_h$ be a prismatic mesh of $\Sigma$.

\subsection{Right-hand side}

We discuss the discretization of the two parts $ (1/2-\mathscr{N})\phi_h$ and $\M_1 U_0$ of the right-hand side $f$ given as in \eqref{eq:direct}.
\subsubsection{Neumann datum}
For our computations, we replace the Neumann datum $\phi$ by its $L^2(\Sigma)$-orthogonal projection $\phi_h := \Pi_h \phi$ onto $\PP_h$-piecewise polynomials of degree $p_t$ in time and $p_\x$ in space.
In particular, we replace the right-hand side $f$ of \eqref{eq:direct} by 
$$ f_h := (1/2-\mathscr{N})\phi_h - \M_1 U_0.$$
To ensure that the best possible convergence rate $\OO\big(N_h^{-\frac{\min\{p_\x+1/2,2p_t+3/2\}}{d+1}}\big)$ for parabolically scaled meshes $\PP_h$ (see \eqref{eq:rates}) is not deteriorated,
 we must control the resulting error $\norm{ u_h - \widetilde u_h}{H^{1/2,1/4}(\Sigma)}$, where $\widetilde u_h$ denotes the Galerkin solution  of \eqref{eq:direct} with $f$ replaced by $f_h$. By stability of the Galerkin projection and the operator  $\mathscr{N}$, we see that 

$$ \norm{u_h - \widetilde u_h}{H^{1/2,1/4}(\Sigma)} \lesssim \norm{(1/2-\mathscr{N})(\phi-\phi_h)}{H^{-1/2,-1/4}(\Sigma)}
\lesssim \norm{\phi-\phi_h}{H^{-1/2,-1/4}(\Sigma)}.$$
Using the characterization of the dual norm and the $L^2(\Sigma)$-orthogonality of $\Pi_h$, we get that
\begin{align*}
\norm{\phi-\phi_h}{H^{-1/2,-1/4}(\Sigma)}
&= \sup_{v\in H^{1/2,1/4}(\Sigma)\setminus\{0\}}
\frac{\bigl|\langle (1-\Pi_h)\phi, v\rangle_{\Sigma}\bigr|}{\|v\|_{H^{1/2,1/4}(\Sigma)}}\\
&= \sup_{v\in H^{1/2,1/4}(\Sigma)\setminus\{0\}}
\frac{\bigl|\langle h_\x^{1/2}(1-\Pi_h)\phi,\,h_\x^{-1/2}(1-\Pi_h)v\rangle_{\Sigma}\bigr|}{\|v\|_{H^{1/2,1/4}(\Sigma)}}\\
&\leq \norm{ h_\x^{1/2}(1-\Pi_h)\phi}{L^2(\Sigma)}
\sup_{v\in H^{1/2,1/4}(\Sigma)\setminus\{0\}}
\frac{\norm{h_\x^{-1/2}(1-\Pi_h)v}{L^2(\Sigma)}}{\|v\|_{H^{1/2,1/4}(\Sigma)}}.
\end{align*}
Since $\Pi_h$ is a local projection, we may replace it by the interpolation operator $\mathcal{I}_h$ and we conclude by Proposition~\ref{lem:interp-stability}  that the second factor is indeed finite.
For uniform parabolically scaled meshes $\PP_h$, standard approximation results show for the first factor that 
$$\norm{ h_\x^{1/2}(1-\Pi_h)\phi}{L^2(\Sigma)} = \OO\big(\norm{h_\x}{L^\infty(\Sigma)}^{\min  \{ p_\x+3/2,2p_t+5/2   \} }\big)= \OO\big(N_h^{-\frac{\min   \{ p_\x+3/2,2p_t+5/2  \} }{d+1}}\big).$$
We conclude that the approximation error introduced by replacing $\phi$ by $\phi_h$  is of higher order.

We next discuss the computaton of $\mathscr{N} \phi_h$ as well as $\langle \mathscr{N} \phi_h,v_h\rangle$. For $(t,\x) \in \Sigma$, we can write  

\begin{align}\label{eq:appB1}
(\mathscr{N} \phi_h)(t,\x) \overset{\eqref{eq:partialnyG}}{=} \sum_{J\times K \in \PP_h} \int_K \int_J \frac{(\y-\x)\cdot\n(\x)}{2(t-s)}G(t-s,\x-\y)\phi_h(s,\y)\d s \d\y.
\end{align}
These evaluations of \eqref{eq:appB1} are required for the numerical quadrature of the error estimator~\eqref{eq:estimator}. 

For piecewise polynomial test functions $v_h$, we can write 

\begin{align}\label{eq:appB2}
&\langle \mathscr{N}\phi_h,v_h\rangle =\\ \notag
& \sum_{J_1\times K_1 \in \PP_h} \sum_{J_2\times K_2 \in \PP_h} \int_{K_1} \int_{K_2} \int_{J_1} \int_{J_2}  \frac{(\y-\x)\cdot\n(\x)}{2(t-s)}G(t-s,\x-\y)\phi_h(s,\y)v_h(t,\x)\d s \d t\d \y \d\x.
\end{align}
These duality products of \eqref{eq:appB2} are required for the numerical quadrature of the Galerkin system~\eqref{eq:Galerkin}.

In Section~\ref{sec:timeintegrals}, we will further simplify the expressions \eqref{eq:appB1}--\eqref{eq:appB2} by analytically computing the time integrals, and in Section~\ref{sec:spaceintegrals}, we will address the numerical computation of the space integrals.

\subsubsection{Initial condition}

To compute the normal derivative of the initial potential applied to the initial condition $U_0$, we introduce a mesh $\mathcal{T}_h$  of $\Omega$. 
For $(t,\x) \in \Sigma$, we can write 
\begin{align}\label{eq:initialcond1}
(\M_1 U_0)(t,\x)&=\sum_{K \in \mathcal{T}_h} \int_K (\y-\x)\cdot\n(\x)\frac{1}{2t} G(t,\x-\y) U_0(\y) \d\y.
\end{align}
Note that the integrands are smooth for $t>0$ so that we can use standard quadrature. 
These evaluations of \eqref{eq:initialcond1} are required for the numerical quadrature of the error estimator~\eqref{eq:estimator}. 

For piecewise polynomial test functions $v_h$, we can write

\begin{align}\label{eq:initialcond2}
\langle \M_1 U_0,v_h \rangle &= \sum_{J_1 \times K_1 \in \PP_h} \sum_{K \in \mathcal{T}_h} \int_{K_1} \int_K \int_{J_1}(\y-\x)\cdot\n(\x)\frac{1}{2t} G(t,\x-\y) U_0(\y) v_h(t,\x) \d t\d\y \d \x.
\end{align}
These duality products of \eqref{eq:initialcond2} are required for the numerical quadrature of the Galerkin system~\eqref{eq:Galerkin}.

In Section~\ref{sec:timeintegrals}, we will further simplify the expression \eqref{eq:initialcond1} by analytically computing the time integral, and in Section~\ref{sec:spaceintegrals}, we will address the numerical computation of the space integrals.

\begin{remark} In our numerical experiments for $d=2$, similarly as in \cite[Remark A.1]{gv21} we construct a triangular mesh $\mathcal{T}_h$ depending on the position of $\x$ for \eqref{eq:initialcond1} and on the position of $K_1$ for \eqref{eq:initialcond2}. In particular, $\mathcal{T}_h$ should read as $\mathcal{T}_h(K_1)$ in \eqref{eq:initialcond2}.
\end{remark}

\subsection{Left-hand side}\label{sec:calctimedepint}

Similarly as in the stationary case \cite[Theorems 6.15 and 6.17]{steinbach08}, for sufficiently smooth functions $u$, $v$ on $\Sigma$, the duality product $\langle \W u,v\rangle_\Sigma$ for the hypersingular integral operator $\W = -\partial_\n \widetilde \K\colon H^{1/2,1/4}(\Sigma)\to H^{-1/2,-1/4}(\Sigma)$ can be rewritten in terms of the single-layer operator $\V:=(\cdot)|_\Sigma \circ \widetilde \V$. 
For sufficiently smooth $\phi$, $\V \phi$ is given as 

	\[(\V\phi)(t,\x):=\int_\Sigma G(t-s,\x-\y)\phi	(s,\y)\d\y\d s \quad \text{for all } (t,\x) \in \Sigma.\]
The following integration-by-parts formula is found in \cite[Theorem 6.1]{costabel90} for $d=2$ and in \cite[ Theorem 2.1]{mst15} for $d=3$; see also \cite{wo22} for a detailed proof. 

\begin{theorem}\label{theorem:hso}
For $u,v\in H^{1,1/2}(\Sigma)$, it holds that
\begin{equation}\label{hsodarstellung}
\langle \mathscr{W}u, v\rangle_\Sigma
= \langle \mathscr{V}\,D_\Gamma u,\; D_\Gamma v\rangle_\Sigma
+ \langle \partial_t\mathscr{V} (u\,\n),\; v\,\n\rangle_\Sigma
\end{equation}
with
$$D_\Gamma (\cdot):=\begin{cases}\nabla_\Gamma (\cdot)\cdot(\n_2, -\n_1)^\top & \quad \text{if } d=2,\\ \n\times \nabla_\Gamma (\cdot) & \quad \text{if } d=3.\end{cases}$$

\end{theorem}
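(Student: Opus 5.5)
We reduce the identity to the classical stationary integration-by-parts formula for the hypersingular operator, applied pointwise in time under the time convolution. Both sides of \eqref{hsodarstellung} are continuous bilinear forms on $H^{1,1/2}(\Sigma)$. We therefore first prove it for smooth $u,v$ (e.g.\ piecewise smooth on the faces of $\Gamma$, smooth in time, with compact support in $t$ near $T$ where needed) and then conclude by density. For smooth data we write
\[
\dual{\W u}{v}_\Sigma = -\lim_{\varepsilon\to0}\int_\Sigma v(t,\x)\,\n(\x)\cdot\nabla_\x \int_\Sigma \n(\y)\cdot\nabla_\y G(t-s,\x-\y)\,u(s,\y)\d\y\d s\d\x\d t,
\]
where the limit means that the normal derivative is taken as a limit from inside $\Omega$.

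\textbf{Step 1 (kernel identity).} Set $\mathbf{r}=\x-\y$ and $\tau=t-s$. We use $\nabla_\y G=-\nabla_\x G$ together with
\[
\n(\x)\cdot\nabla_\x\bigl(\n(\y)\cdot\nabla_\y G\bigr) = -\,\n(\x)^\top \nabla^2 G\,\n(\y)
\]
and the algebraic identity
\[
\n_\x^\top A\,\n_\y = (\n_\x\cdot\n_\y)\operatorname{tr}A - (\n_\x\times\nabla)\cdot(\n_\y\times\nabla)\,G
\]
for $A=\nabla^2G$. In $d=2$ this is written with the rotated normals $(\n_2,-\n_1)$. Since $\operatorname{tr}\nabla^2 G=\Delta_\x G=\partial_\tau G$ for $\tau>0$, the hypersingular kernel splits into
\[
-(\n_\x\cdot\n_\y)\,\partial_\tau G(\tau,\mathbf{r}) + \text{(tangential curl terms)}.
\]

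\textbf{Step 2 (integration by parts).} Stokes' theorem on the closed surface $\Gamma$ (no boundary terms, since $\Gamma$ is closed and the traces are continuous across edges) moves the tangential derivatives $\n_\x\times\nabla_\x$ and $\n_\y\times\nabla_\y$ onto $v(t,\cdot)$ and $u(s,\cdot)$. This produces $\dual{\V D_\Gamma u}{D_\Gamma v}_\Sigma$. For the time term, $-\partial_\tau G(t-s,\cdot) = -\partial_t G$ gives $\partial_t\V(u\n)$ tested against $v\n$. The time derivative may be kept on the convolution, which is well defined as an $H^{1/2}$-in-time pairing via $\partial_t\V$ on functions with vanishing initial values. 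The sign bookkeeping must be checked against $\W=-\partial_\n\widetilde\K$.

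\textbf{Main obstacle.} The main difficulty is justifying all of this for the singular kernel. Near $\tau=0$, $\mathbf{r}=0$ the kernel is not integrable, so the interchange of the limit (normal derivative from the interior) with the integration by parts must be done carefully. We would do it with the potentials: $\widetilde\K u$ satisfies the heat equation in $Q$ and $\nabla\widetilde\K u$ can be expressed through $\widetilde\V$ applied to $D_\Gamma u$ and $u\n$ (the Maue-type representation, $\nabla\widetilde\K u = \operatorname{curl}\widetilde\V(\cdot)-\partial_t\widetilde\V(u\n)$). Taking the normal trace via the jump relations of the single layer, whose tangential parts are continuous, avoids pointwise hypersingular integrals. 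The density and extension to $H^{1,1/2}(\Sigma)$ then follow from the mapping properties $\V:H^{-1/2,-1/4}\to H^{1/2,1/4}$ and the continuity of $D_\Gamma:H^{1,1/2}\to H^{-1/2,-1/4}$-type spaces as in \cite{costabel90}.
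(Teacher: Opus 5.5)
The paper does not prove this identity itself; it cites \cite[Theorem~6.1]{costabel90} for $d=2$, and \cite[Theorem~2.1]{mst15} together with the detailed proof in \cite{wo22} for $d=3$. Your route is the classical Maue-type argument (cf.\ the stationary case \cite{steinbach08} mentioned in the paper), and the signs do come out as in \eqref{hsodarstellung}. In outline it runs as follows.
\begin{itemize}
\item For smooth $u$, represent $\nabla\widetilde\K u$ in $Q$ through $\widetilde\V(D_\Gamma u)$ and $\partial_t\widetilde\V(u\n)$. For $\x\notin\Gamma$, the kernel identity, $\Delta_\x G=\partial_t G$ and the surface integration by parts are harmless.
\item Take the normal trace using only the continuous tangential trace of the single-layer potential.
\item Integrate by parts once more on $\Gamma$, and conclude by density.
\end{itemize}
Your Steps~1--2 on $\Sigma\times\Sigma$ are only formal: $\int_\Gamma\partial_\tau G(\tau,\x-\y)\d\y$ behaves like $\tau^{-3/2}$, so the time term is not integrable at $\tau=0$. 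As you say, the potential-based argument has to carry the proof.

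The step that does not go through as written is extending the second summand to all of $H^{1,1/2}(\Sigma)$.
\begin{itemize}
\item Your density argument needs $(u,v)\mapsto\langle\partial_t\V(u\n),v\n\rangle_\Sigma$ to be continuous in the $H^{1,1/2}(\Sigma)$-norm. You only give this term a meaning as an $H^{1/2}$-in-time pairing for functions with vanishing initial values, but $H^{1,1/2}(\Sigma)$ carries no initial condition.
\item For smooth $u$ with $u(0,\cdot)\neq0$ one has $\partial_t\V(u\n)=\V(\partial_t u\,\n)+\int_\Gamma G(t,\cdot-\y)\,u(0,\y)\,\n(\y)\d\y$. The last term behaves like $t^{-1/2}$ on $\Sigma$, so it is not even in $L^2(\Sigma)$. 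This is why the time integration by parts in \eqref{eq:hypersingular2} requires $u_h(0,\cdot)=0$.
\item You could restrict to smooth functions vanishing near $t=0$; these are dense in $H^{1,1/2}(\Sigma)$, since $C_c^\infty$ is dense in $H^{1/2}(I)$. You would still need continuity in the $H^{1,1/2}(\Sigma)$-norm itself. Continuity only in the stronger norm of $\H^{1,1/2}(\Sigma)$ gives the identity only on $\H^{1,1/2}(\Sigma)$.
\item The mechanism you indicate cannot supply the estimate anyway. $\partial_t$ is not a bounded pairing on $H^{1/2}$ in time over a finite interval: for $w(0)=0$ one has $\int_0^T w'w\d t=w(T)^2/2$, which $\|w\|_{H^{1/2}(I)}^2$ does not control.
\end{itemize}

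What is missing is the bound $|\langle\partial_t\V\mathbf{w},\mathbf{z}\rangle_\Sigma|\lesssim\norm{\mathbf{w}}{H^{1/4}(I;L^2(\Gamma))}\norm{\mathbf{z}}{H^{1/4}(I;L^2(\Gamma))}$, which expresses that $\partial_t\V$ is of order $1/2$ in time. To prove it:
\begin{itemize}
\item write $\partial_t$ as a product of two causal half-derivatives;
\item commute one of them with the causal time convolution $\V$, and move the other onto $\mathbf{z}$ by duality;
\item use $\V:H^{-1/2,-1/4}(\Sigma)\to H^{1/2,1/4}(\Sigma)$ together with $H^{-1/4}(I;L^2(\Gamma))\subset H^{-1/2,-1/4}(\Sigma)$.
\end{itemize}
Because $1/4<1/2$, extension by zero at $t=0$ and truncation at $t=T$ are bounded on $H^{1/4}(I)$, so no initial condition enters. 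With this bound, both sides of \eqref{hsodarstellung} are continuous on $H^{1,1/2}(\Sigma)$. Your density argument with smooth functions vanishing near $t=0$ then completes the proof, since for these $\partial_t\V(u\n)=\V(\partial_t u\,\n)$ and the Maue-type representation are unproblematic.
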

Using another integration by parts \cite[Lemmas 6.13 and 6.16]{steinbach08}, we obtain the following explicit formula for the hypersingular operator
\begin{equation}\label{eq:hypersingularopvar}
\mathscr{W}u=- D_\Gamma'(\mathscr{V} (D_\Gamma u))+\n\cdot \partial_t \mathscr{V}(u \,\n)
\end{equation}
with $$D_\Gamma' (\cdot):=\begin{cases}D_\Gamma (\cdot) & \quad \text{if } d=2,\\ \n\cdot D_\Gamma (\cdot) & \quad \text{if } d=3.\end{cases}$$ 
For a continuous piecewise polynomial $u_h$ with $u_h(0,\cdot) = 0$ and $(t,\x) \in \Sigma$, we can write the first summand in \eqref{eq:hypersingularopvar} without the differential operator $D_\Gamma'$ in front as

\begin{equation}\label{eq:hypersingular1}
	(\V (D_\Gamma u_h))(t,\x) = \sum_{J\times K \in \PP_h} \int_K \int_J G(t-s,\x-\y)D_\Gamma u_h	(s,\y)\d s\d\y.
\end{equation}
For the implementation, we replace the application of the differential operator $D_\Gamma'$ to \eqref{eq:hypersingular1} by the application of $D_\Gamma'$ to the Lagrange interpolation of \eqref{eq:hypersingular1}. 
We can write the  second summand in \eqref{eq:hypersingularopvar} with integration by parts in time applied to $\partial_t G(t-s,\x-\y) = -\partial_s G(t-s,\x-\y) $ as
\begin{align}\label{eq:hypersingular2} \begin{split}
	\n(\x) \cdot(\partial_t \V ( u_h \n))(t,\x) &= \int_\Gamma \int_0^T \partial_t G(t-s,\x-\y)u_h	(s,\y) \n(\x)\cdot \n(\y) \d s\d \y\\ 
	 &= \sum_{J\times K \in \PP_h} \int_K \int_J G(t-s,\x-\y)\partial_s u_h	(s,\y) \n(\x)\cdot \n(\y) \d s\d \y.\end{split}
	\end{align}
The boundary terms vanish since $u_h(0,\cdot)=0$ and $G(t-T,\x-\y)=0$.
The evaluation of \eqref{eq:hypersingularopvar} is required for the computation of the error estimator~\eqref{eq:estimator}.
 
For continuous piecewise polynomial test functions $v_h$, \eqref{eq:hypersingular1}--\eqref{eq:hypersingular2} imply for the two summands in \eqref{hsodarstellung} that

\begin{equation}	\label{eq:hypersingular3}\begin{split}
&\langle\V (D_\Gamma u_h),D_\Gamma v_h\rangle\\ = &\sum_{J_1\times K_1 \in \PP_h} \sum_{J_2\times K_2 \in \PP_h} \int_{K_1} \int_{K_2} \int_{J_1} \int_{J_2} G(t-s,\x-\y)D_\Gamma u_h(s,\y)D_\Gamma v_h	(t,\x) \d s\d t\d \y \d \x, \end{split}
\end{equation}
and

\begin{align}\label{eq:hypersingular4}
&\langle\partial_t \V (u_h \n), v_h \n\rangle\\\notag
 = &\sum_{J_1\times K_1 \in \PP_h} \sum_{J_2\times K_2 \in \PP_h} \int_{K_1} \int_{K_2} \int_{J_1} \int_{J_2} G(t-s,\x-\y)\partial_s u_h	(s,\y) v_h(t,\x)\n(\x)\cdot \n(\y)\d s\d t\d \y \d \x.
\end{align}
The duality products of \eqref{hsodarstellung} are required for the computation of the Galerkin system \eqref{eq:Galerkin}.

In Section~\ref{sec:timeintegrals}, we will further simplify the expressions \eqref{eq:hypersingular1}--\eqref{eq:hypersingular4} by analytically computing the time integrals, and in Section~\ref{sec:spaceintegrals}, we will address the numerical computation of the space integrals.  

\subsection{Analytical computation of time integrals.} \label{sec:timeintegrals}

It is well-known that integration in time for the considered boundary integral operators can be performed exactly in case that the ansatz and test functions are piecewise constant in time; see, e.g., \cite{costabel90,reinarz15,zwom21,gv22}. 
We generalize these explicit formulas to arbitrary polynomial degree in time.
\subsubsection{Single time integrals} As we have seen in the previous sections, the time integrals of $\langle \M_1 U_0,v_h\rangle_\Sigma$ in the Galerkin system~\eqref{eq:Galerkin}  and of the residual in the estimator~\eqref{eq:estimator} can be reduced to
\begin{align}\label{formelintegral0}
&\int_a^b \frac{1}{(t-s)^{1-m}}\,G(t-s,\z)\,s^j\d s
\end{align}
with $m\in\Z$ (more precisely only $m\in\{0,1\}$), a time interval $[a,b]\subseteq \overline{I}$, a time point $t\in I$, $\z\in \R^d\setminus\{0\}$, and $j\in\N_0$.
For $\alpha\in\mathbb{R}$ and $r>0$, we introduce the incomplete gamma function
\begin{align}\label{ableitungGamma}
\Gamma(\alpha,r):=\int_{r}^\infty s^{\alpha-1}e^{-s}~\d s\quad\text{ with derivative }\quad\frac{\d}{\d r}\Gamma(\alpha,r)=-r^{\alpha-1}e^{-r}.
\end{align}
We abbreviate $\delta:=\frac{d}{2}$ and $\zeta:=\frac{|\z|^2}{4}$.
For $t>b$, integration by substitution $(s\mapsto t-\frac{\zeta}{s})$ and the binomial formula show that
\begin{align}\label{eq:innerintegral}
&\int_{a}^{b} \frac{1}{(t-s)^{1-m}}G(t-s,\z)s^j \d s=\frac{1}{(4\pi)^\delta}\int_{a}^{b} \frac{1}{(t-s)^{1-m+\delta}}e^{\frac{-\zeta}{t-s}}s^j \d s\\ \notag
& \quad=\frac{1}{(4\pi)^\delta}\int^{\frac{\zeta}{t-b}}_{\frac{\zeta}{t-a}}   \Bigl( \frac{s}{\zeta}\Bigr)^{1-m+\delta} e^{-s}\frac{\zeta}{s^2}\Bigl(t-\frac{\zeta}{s}\Bigr)^j~\d s\\ \notag
& \quad =\frac{1}{(4\pi)^\delta}\sum_{k=0}^j \binom{j}{k} t^{j-k} (-1)^k\zeta^{m-\delta+k} \int^{\frac{\zeta}{t-b}}_{\frac{\zeta}{t-a}} s^{\delta-k-m-1}e^{-s}~\d s\\ \notag
& \quad =\frac{1}{(4\pi)^\delta}\sum_{k=0}^j\binom{j}{k} t^{j-k} (-1)^k \zeta^{m-\delta+k}\biggl[ \Gamma\Big(\delta-k-m,\frac{\zeta}{t-a}\Big)-\Gamma\Big(\delta-k-m,\frac{\zeta}{t-b}\Big)\biggr].
\end{align}
For general $t\geq 0$, the piecewise definition of $G$ yields that
\begin{equation}\label{formelintegral}
\int_{a}^{b} \frac{1}{(t-s)^{1-m}}G(t-s,\z)s^j \d s=\mathfrak{g}^{j,m}_{t,a}(\z)-\mathfrak{g}^{j,m}_{t,b}(\z)
\end{equation}
with
\begin{align*}
\mathfrak{g}_{t,\sigma}^{j,m}(\z):=\begin{cases}\frac{1}{(4\pi)^\delta}\sum_{k=0}^j \binom{j}{k}(-1)^k \zeta^{m-\delta+k}t^{j-k}  \Gamma(\delta-k-m,\frac{\zeta}{t-\sigma})&\text{if } t-\sigma>0,
\\ 0&\text{else,}\end{cases}
\end{align*}
$\sigma\in \{a,b\}$. 
\subsubsection{Double time integrals}
As we have seen in the previous sections, the time integrals for the duality products in the Galerkin system~\eqref{eq:Galerkin} can be reduced to
\[\int_{a_1}^{b_1}\int_{a_2}^{b_2}\frac{1}{(t-s)^{1-m}}G(t-s,\z)s^jt^i\d s\d t=\int_{a_1}^{b_1}(\mathfrak{g}^{j,m}_{t,a_2}(\z)-\mathfrak{g}^{j,m}_{t,b_2}(\z))t^i\d t\]
with $m\in\Z$ (more precisely only $m\in\{0,1\}$), time intervals $[a_1,b_1],[a_2,b_2]\subseteq \overline{I}$, $\z\in \R^d\setminus\{0\}$, and $i,j\in\N_0$. 
Recalling \eqref{formelintegral}, it remains to calculate the integral of $t^{i+j-k}\Gamma(\delta-k-m,\frac{\zeta}{t-\sigma})$
with respect to $t$. We abbreviate $\beta := \delta -k -m$. 
Then, the substitution $t \mapsto t+\sigma$  gives that $$\int_{a_1}^{b_1}   t^{i+j-k}\Gamma\Big(\beta,\frac{\zeta}{t-\sigma}\Big)\d t  =\int_{a_1-\sigma}^{b_1-\sigma} \sum_{\ell=0}^{i+j-k}\binom{i+j-k}{\ell}\sigma^{i+j-k-\ell}t^\ell\Gamma\Big(\beta,\frac{\zeta}{t}\Big)\d t$$
 For $t \in \{a_1,b_1\}$, integration by parts (using \eqref{ableitungGamma} to get $\frac{\d}{\d t}\Gamma(\beta,\frac{\zeta}{t})=\zeta^\beta t^{-1-\beta} e^{-\zeta/t}$) and integration by substitution $(t\mapsto \frac{\zeta}{t})$ give that
\begin{align*}
\int_{a_1-\sigma}^{b_1-\sigma} t^\ell \Gamma \Big(\beta,\frac{\zeta}{t}\Big)\d t&=\biggl[\frac{t^{\ell+1}}{\ell+1}\Gamma\Big(\beta,\frac{\zeta}{t}\Big) \biggr]_{a_1-\sigma}^{b_1-\sigma}-\int_{a_1-\sigma}^{b_1-\sigma} \frac{t^{\ell+1}}{\ell+1}\zeta^\beta t^{-1-\beta}e^{-\zeta/t}\d t\\
&=\biggl[\frac{t^{\ell+1}}{\ell+1}\Gamma\Big(\beta,\frac{\zeta}{t}\Big) \biggr]_{a_1-\sigma}^{b_1-\sigma}-\frac{1}{\ell+1}\int_{\zeta/(a_1-\sigma)}^{\zeta/(b_1-\sigma)} \Big(\frac{\zeta}{t}\Big)^{\ell-\beta}\zeta^\beta \frac{-\zeta}{t^2}e^{-t}\d t\\
&=\frac{1}{\ell+1}\Bigl[t^{\ell+1}\Gamma\Big(\beta,\frac{\zeta}{t}\Big)-\zeta^{\ell+1}\Gamma\Big(\beta-\ell-1,\frac{\zeta}{t}\Big)\Bigr]_{a_1-\sigma}^{b_1-\sigma}=:\mathfrak{H}^{\ell,\beta}_{b_1-\sigma}(\z)-\mathfrak{H}^{\ell,\beta}_{a_1-\sigma}(\z).
\end{align*} 

The piecewise definition of $\mathfrak{g}_{t,\sigma}^{j,m}$ yields that
\begin{equation}\label{formeldoppelintegral}
\int_{a_1}^{b_1}\int_{a_2}^{b_2} \frac{1}{(t-s)^{1-m}}G(t-s,\z)s^jt^i \d s\d t=\mathfrak{G}^{i,j,m}_{a_1,a_2}(\z)-\mathfrak{G}^{i,j,m}_{b_1,a_2}(\z)-\mathfrak{G}^{i,j,m}_{a_1,b_2}(\z)+\mathfrak{G}^{i,j,m}_{b_1,b_2}(\z),
\end{equation} with
\begin{align*}
\mathfrak{G}^{i,j,m}_{\tau,\sigma}(\z):=\begin{cases}
\frac{1}{(4\pi)^\delta}\sum_{k=0}^j \binom{j}{k}(-1)^k\zeta^{m-\delta+k} & \\ \quad\quad \sum_{\ell=0}^{i+j-k}\binom{i+j-k}{\ell}\sigma^{i+j-k-\ell}\mathfrak{H}^{\ell,\delta-k-m}_{\tau-\sigma}(\z)&\text{if } \tau-\sigma>0,\\
0&\text{else},
\end{cases}
\end{align*}
$\tau \in \{a_1,b_1\}$, $\sigma\in\{a_2,b_2\}$.

\subsection{Approximate computation of space integrals}\label{sec:spaceintegrals}

It remains to discuss the numerical quadrature of the singular integrals 

\begin{equation}\label{Einfachintegral}
\int_{K_2} [(\y - \x)\cdot\n(\x)]^{1-m} \mathfrak{g}_{t,\sigma}^{j,m}(\x-\y) u_{h_\x}(\y) \d\y
\end{equation}
and    
\begin{equation}\label{Doppelintegral}
\int_{K_1} \int_{K_2} [(\y - \x)\cdot \n(\x)]^{1-m}  \mathfrak{G}_{\tau,\sigma}^{i,j,m}(\x-\y) u_{h_\x}(\y) v_{h_\x}(\x) \d\y \d\x
\end{equation}
for $(t,\x)\in \Sigma$, $J_1 \times K_1, J_2 \times K_2 \in \PP_h$, $\tau,\sigma\in \overline{I}$ with $\tau>\sigma$ , $i, j \in \N_0$, $m \in \{0,1\}$, and piecewise polynomials $u_{h_\x}$ and $v_{h_\x}$ on $\Gamma$. 
For the term $\langle \M_1 U_0,v_h\rangle_\Sigma$, we additionally need to consider
\begin{equation}\label{Doppelintegral2}
\int_{K_1}\int_K [(\y - \x)\cdot \n(\x)]^{1-m} \mathfrak{g}_{t,\sigma}^{i,m}(\x-\y) U_0(\y) v_{h_\x}(\x) \d\y \d\x
\end{equation}
for $K \subset \Omega$. 

Since the boundary $\Gamma$ is piecewise smooth, it holds that $(\y-\x)\cdot \n(\x)=\OO(|\x-\y|^2)$ for $\x,\y$ on the same smooth part of $\Gamma$ and $|\x-\y|\to 0$; see, e.g., \cite[Lemma 2.2.14]{ss11}. (In fact, for piecewise flat $\Gamma$, we even have that $(\y-\x)\cdot \n(\x) = 0$.)
It remains to determine the strength of the singularity of $\mathfrak{g}^{j,m}_{t,\sigma}(\mathbf{z})$ and $\mathfrak{G}^{\,i,j,m}_{\tau,\sigma}(\mathbf{z})$ at $\z=0$.
For $\mathfrak{g}^{j,m}_{t,\sigma}(\mathbf{z})$ from \eqref{formelintegral}, the only term that requires a detailed study
is
\[
\zeta^{\,m-\delta+k}\,
\Gamma\Bigl(\delta-k-m,\frac{\zeta}{t-\sigma}\Bigr)
\]
for $k \in \{0,\ldots,j\}$.
Here, we use again the abbreviations $\delta=d/2$ and $\zeta=|\mathbf{z}|^2/4$.
Similarly, for 
$\mathfrak{G}^{\,i,j,m}_{\tau,\sigma}(\mathbf{z})$ from \eqref{formeldoppelintegral}, it is sufficient to study the term
\[\zeta^{m-\delta+k}\Bigl(\zeta^{\ell+1}\Gamma(\delta-k-m-\ell-1,\tfrac{\zeta}{\tau-\sigma})
- (\tau-\sigma)^{\ell+1}\Gamma(\delta-k-m,\tfrac{\zeta}{\tau-\sigma})\Bigr)
\]
for $k \in \{0,\ldots,j\}$ and $\ell \in \{0,\ldots,i+j-k\}$. Abbreviating $\xi:=\zeta/(\tau - \sigma)$, all these terms are of the form 
\begin{equation}\label{eq:reduced-term}
\xi^{n-\delta}\,\Gamma(\delta-n,\xi)
\end{equation}
for some $n \in \N_0$ (either $n = k+m$ or $n = k+m+\ell+1$). The following lemma characterizes the singularity of this term. Here, $\Gamma(\cdot)$ denotes the (complete) gamma function.
\begin{lemma}\label{lem:gamma-singularity}
Let $\delta=\tfrac d2$ and $n\in\mathbb{N}_0$. For $\xi \searrow 0$, it holds that

\[\xi^{n-\delta}\,\Gamma(\delta-n,\xi)=\begin{cases}
\mathcal{O}(\xi^{-1}) &\text{if } d=2 \text{ and } n=0,\\
\mathcal{O}(\xi^{n-1} \ln (\xi))&\text{if } d=2 \text{ and } n\geq1,\\
\mathcal{O}(\xi^{n-3/2}) &\text{if } d=3 \text{ and } n\in \{0,1\},\\
\mathcal{O}(\xi^{n-2})&\text{if } d 	=3 \text{ and } n\ge 2.
\end{cases}\]
\end{lemma}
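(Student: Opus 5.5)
The plan is to derive an explicit small-$\xi$ expansion of $\Gamma(\delta-n,\xi)$ from the recurrence
\[
\Gamma(a+1,\xi)=a\,\Gamma(a,\xi)+\xi^{a}e^{-\xi},
\]
which follows from \eqref{ableitungGamma} by one integration by parts. We then multiply by $\xi^{n-\delta}$ and read off the leading behaviour in each of the four cases. Solving the recurrence downwards, i.e., $\Gamma(a,\xi)=\bigl(\Gamma(a+1,\xi)-\xi^{a}e^{-\xi}\bigr)/a$ for $a\neq0$, reduces every case to one of three base functions:
\begin{itemize}
\item $\Gamma(0,\xi)=E_1(\xi)=-\gamma-\ln\xi+\mathcal{O}(\xi)$ for $d=2$;
\item $\Gamma(\tfrac12,\xi)=\sqrt{\pi}-2\xi^{1/2}+\mathcal{O}(\xi^{3/2})$ for $d=3$, $n=1$;
\item $\Gamma(\tfrac32,\xi)=\tfrac{\sqrt\pi}{2}+\mathcal{O}(\xi^{3/2})$ for $d=3$, $n=0$.
\end{itemize}
All of these are continuous on $[0,\infty)$ except for the explicit logarithm.

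\textbf{Case $d=2$.} Here $\delta=1$. For $n=0$ the term is $\xi^{-1}\Gamma(1,\xi)=\xi^{-1}e^{-\xi}=\mathcal{O}(\xi^{-1})$. For $n\ge1$, induction on the recurrence gives
\[
\Gamma(1-n,\xi)=\frac{(-1)^{n-1}}{(n-1)!}\Bigl(E_1(\xi)-e^{-\xi}\sum_{k=0}^{n-2}(-1)^k k!\,\xi^{-k-1}\Bigr).
\]
After multiplication by $\xi^{n-1}$, the $E_1$-contribution is $\tfrac{(-1)^{n-1}}{(n-1)!}\xi^{n-1}(-\ln\xi+\mathcal{O}(1))$. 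This is the $\xi^{n-1}\ln\xi$ term of the claim, and for $n=1$ it is the whole answer. For $n\ge2$ the remaining sum $e^{-\xi}\sum_{k}c_k\xi^{n-2-k}$ is a polynomial in $\xi$ times $e^{-\xi}$, so it is smooth and bounded at $\xi=0$.

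\textbf{Case $d=3$.} Here $\delta=3/2$. For $n=0$ we get $\xi^{-3/2}\Gamma(\tfrac32,\xi)=\mathcal{O}(\xi^{-3/2})$, and for $n=1$ we get $\xi^{-1/2}\Gamma(\tfrac12,\xi)=\mathcal{O}(\xi^{-1/2})$; both are the claimed $\xi^{n-3/2}$. For $n\ge2$, iterating the recurrence $n-1$ times from $\Gamma(\tfrac12,\xi)$ writes $\Gamma(\tfrac32-n,\xi)$ as $c_n\Gamma(\tfrac12,\xi)$ plus $e^{-\xi}$ times a finite combination of powers $\xi^{1/2-k}$ with $k=1,\dots,n-1$. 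Multiplying by $\xi^{n-3/2}$ gives $c_n\xi^{n-3/2}\Gamma(\tfrac12,\xi)$ plus $e^{-\xi}$ times integer powers $\xi^{n-1-k}$. Inserting $\Gamma(\tfrac12,\xi)=\sqrt\pi-2\xi^{1/2}+\dots$, the first non-polynomial power produced is $\xi^{n-2}$, from $-2c_n\xi^{n-3/2}\xi^{1/2}$ combined with the $k=1$ polynomial term. This yields the stated order $\xi^{n-2}$.

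\textbf{Main obstacle.} The delicate step is the bookkeeping in the cases $d=2$, $n\ge2$ and $d=3$, $n\ge2$. There the finite sums from the recurrence produce terms whose size at $\xi=0$ is not small; they are bounded constants or integer powers rather than decaying. So the stated orders must be understood, as required for the quadrature in Section~\ref{sec:spaceintegrals}, as describing the non-smooth part of $\xi^{n-\delta}\Gamma(\delta-n,\xi)$, modulo a function that is analytic in $\xi$ (respectively smooth in $\z$). I would make this decomposition explicit, namely
\[
\xi^{n-\delta}\Gamma(\delta-n,\xi)=\text{(analytic in }\xi)+\text{(singular part)},
\]
verify the singular part against each case of the lemma, and check carefully that the constants $c_n$ do not vanish, so that the exhibited leading singular order is attained.
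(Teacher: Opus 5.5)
\textbf{Overall.} Your route is essentially the paper's. You reduce $f_n:=\xi^{n-\delta}\Gamma(\delta-n,\xi)$ via $\Gamma(a+1,\xi)=a\Gamma(a,\xi)+\xi^{a}e^{-\xi}$ to the base cases $n<\delta$ and $n=\delta$. Your treatment of $d=2$ (all $n$) and of $d=3$, $n\in\{0,1\}$ is correct, and your formula for $\Gamma(1-n,\xi)$ coincides with the one in the remark at the end of the appendix.

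Your caveat about $n>\delta$ is also correct, and the paper's one-line proof misses it. The recursion $f_n=\frac{1}{\delta-n}(\xi f_{n-1}-e^{-\xi})$ adds the term $-e^{-\xi}/(\delta-n)$ at each step, so $f_n(\xi)\to 1/(n-\delta)\neq 0$. Hence the bounds as literally stated fail for $d=2$, $n\ge2$ and for $d=3$, $n\ge3$. Reading the lemma modulo functions analytic in $\xi$ is the right repair.

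\textbf{The gap: $d=3$, $n\ge2$.} This case is computed incorrectly within your own reading. First, $\xi^{n-3/2}\cdot\xi^{1/2}=\xi^{n-1}$, not $\xi^{n-2}$. Second, and more importantly, every integer power of $\xi$ belongs to the analytic part of your decomposition. This covers that term and also the terms $e^{-\xi}\xi^{n-1-k}$ from the finite sum, so none of them is a ``non-polynomial power''.

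The only non-analytic contribution comes from the constant $\sqrt\pi$ in $\Gamma(\tfrac12,\xi)$, because $\xi^{-1/2}\bigl(\Gamma(\tfrac12,\xi)-\sqrt\pi\bigr)=-\sqrt{\pi}\,\xi^{-1/2}\mathrm{erf}(\sqrt{\xi})$ is entire. Hence the singular part of $f_n$ is $c_n\sqrt\pi\,\xi^{n-3/2}=\Gamma(\tfrac32-n)\,\xi^{n-3/2}$. This is exactly analogous to $d=2$, where the recursion multiplies the singular part of $f_1$ by $\xi^{n-1}$ up to a nonzero constant. In terms of $\z$, this is a multiple of $|\z|^{2n-3}$, the genuinely non-smooth term relevant for quadrature. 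By contrast, $\xi^{n-2}$ is a polynomial in $|\z|^2$.

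Your planned check that $c_n\neq0$ therefore does not show that the order $\xi^{n-2}$ is attained; it shows the sharper order $\xi^{n-3/2}$. The lemma's $\mathcal{O}(\xi^{n-2})$ survives in your sense only as the non-sharp consequence $\xi^{n-3/2}=o(\xi^{n-2})$. Literally, $f_n$ is $\mathcal{O}(1)$ with limit $1/(n-\tfrac32)$, which matches $\xi^{n-2}$ only for $n=2$. You should restate the $d=3$, $n\ge2$ conclusion accordingly rather than claim $\xi^{n-2}$ as the leading singular order.
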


\begin{proof}
We treat the cases $n<\delta$, $n=\delta$, and $n>\delta$ separately.

\textbf{Case $n<\delta$:} The incomplete gamma function has the following power series representation
\[ \xi^{n-\delta}\Gamma(\delta-n,\xi)= \xi^{n-\delta}\Gamma(\delta-n)-\Gamma(\delta-n)e^{-\xi}\sum_{k=0}^\infty\frac{\xi^k}{\Gamma(\delta-n+1+k)}.\]

\textbf{Case $n=\delta$:} 
In this case,  $\Gamma(0,\xi)=-\operatorname{Ei}(-\xi)$,
and the exponential integral has the following power series representation $-\operatorname{Ei}(-\xi) = -\gamma - \ln(\xi) - \sum_{k=1}^\infty \frac{(-\xi)^k}{k\cdot k!}$.

\textbf{Case $n>\delta$:} Integration by parts shows that
\[\Gamma(\delta-(n-1),\xi)=\int_{\xi}^\infty y^{\delta-n}e^{-y}\d y=\xi^{\delta-n}e^{-\zeta}+(\delta-n)\Gamma(\delta-n,\xi).\]
Thus, we obtain the recursion
\begin{equation}
\xi^{n-\delta}\Gamma(\delta-n,\xi)=\frac{1}{\delta-n}\Bigl(\xi\cdot \xi^{(n-1)-\delta}\Gamma(\delta-(n-1),\xi)-e^{-\xi}\Bigr).
\end{equation}
Employing this recursion $n-1$ times, we conclude the proof using the first and the second case.
\end{proof} 
We conclude that the dominant singularity in our case occurs for $n = m$. 
If $m=1$ (which occurs for the operator $\W$, see \eqref{eq:hypersingularopvar}--\eqref{eq:hypersingular4}), the terms $\mathfrak{g}_{t,\sigma}^{j,m}(\x-\y)$ and $\mathfrak{G}_{\tau,\sigma}^{i,j,m}(\x-\y)$ have the singularity $\ln|\x-\y|$ for $d=2$ and $|\x-\y|^{-1}$ for $d=3$. 
If $m=0$ (which occurs for the operator $\mathscr{N}$ and $\M_1$, see \eqref{eq:appB1}, \eqref{eq:appB2}, and \eqref{eq:initialcond2}), and $\x,\y$ lie on the same smooth part of $\Gamma$, the terms $(\y-\x)\cdot \n(\x)  \mathfrak{g}_{t,\sigma}^{j,m}(\x-\y)$ and $(\y-\x)\cdot\n(\x) \mathfrak{G}_{\tau,\sigma}^{i,j,m}(\x-\y)$ are regular for $d=2$ and have the singularity $|\x-\y|^{-1}$ for $d=3$, since $(\y-\x)\cdot \n(\x)=\OO(|\x-\y|^2)$ for $\x \to \y$.

The integrals \eqref{Einfachintegral}--\eqref{Doppelintegral2} can now be computed as in  \cite[Appendix A]{gv22} (in case of a triangular $K \subset \Omega$), using Duffy transformations as in  \cite[Section 5.1]{gps22b} together with the quadrature rules from \cite{smith00} for logarithmic singularities on intervals.

\begin{remark}
In our numerical experiments for $d=2$, the reduced term \eqref{eq:reduced-term} must be evaluated very frequently so that a fast evaluation becomes crucial.  If $m = 0$, the term is given as $\xi^{-1} \Gamma(1,\xi) = \xi^{-1}e^{-\xi}$.
In the case of $n \ge 1$, the arguments of the proof of Lemma~\ref{lem:gamma-singularity} yield the expression
\[\xi^{n-1} \Gamma(1-n,\xi)= -\frac{(-\xi)^{n-1}}{(n-1)!}{\rm Ei}(-\xi)+\frac{e^{-\xi}}{(n-1)!}\sum_{k=0}^{n-2}(n-k-2)!(-\xi)^k. \]
 The evaluation of the exponential integral ${\rm Ei}$ is very time-consuming, so we use the following approximations instead:
\begin{itemize}
\item For small $\xi$, we can use the well-known convergent series representation
\begin{align*}
{\rm Ei}(-\xi) &= \gamma + {\rm Re}(\ln(-\xi))-e^{-\xi/2}\sum_{k=1}^\infty\frac{\xi^k}{k!2^{k-1}}\sum_{l=0}^{ \lfloor(k-1)/2 \rfloor}\frac{1}{2l+1}.
\end{align*}
The series is truncated depending on the value of $\xi$.
\item For large $\xi$, an increasing number of terms in this series must be calculated to obtain a good approximation, because the term ${\xi}^k / (k!2^{k-1})$ now decreases more slowly as k increases. In this case, we thus use the approximation described in \cite{bpl00} instead.
\end{itemize}
\end{remark}

\bibliographystyle{alpha}
\bibliography{literature_sthyp}

\end{document}